\journal{}
\theoremstyle{plain}
\newtheorem{thm}{\bf Theorem}[section]
\newtheorem{lem}[thm]{\bf Lemma} 
\newtheorem{pro}[thm]{\bf Proposition}
\newtheorem{MNFalg}[thm]{\bf MNF Algorithm}
\theoremstyle{definition}
\newtheorem{rem}[thm]{\bf Remark}
\newtheorem{defn}[thm]{\bf Definition}
\newcommand{\tr}{\transparent{1}}
\newcommand\Tstrut{\rule{0pt}{2.6ex}} 
\newcommand{\red}[1]{\textcolor{black}{#1}}
\newcommand{\blue}[1]{\textcolor{black}{#1}}
\g@addto@macro\normalsize{%
	\setlength\abovedisplayskip{3pt}
	\setlength\belowdisplayskip{3pt}
	\setlength\abovedisplayshortskip{4pt}
	\setlength\belowdisplayshortskip{4pt}
}
\algnewcommand\Stepa{\item[\textbf{Step 1:}]}
\algnewcommand\Stepb{\item[\textbf{Step 2:}]} 
\algnewcommand\Stepc{\item[\textbf{Step 3:}]} 
\algnewcommand\Stepd{\item[\textbf{Step 4:}]} 
\algnewcommand\Stepe{\item[\textbf{Step 5:}]} 
\algnewcommand\Result{\item[\textbf{Result:}]}
\begin{document}

\begin{frontmatter}

\title{From Morse Triangular Form of ODE Control Systems to Feedback Canonical Form of DAE Control Systems}

\author[First]{Yahao Chen} 
\author[Second]{Witold Respondek} 

\address[First]{Bernoulli Institute for Mathematics, Computer Science, and
		Artificial Intelligence, University of Groningen, The Netherlands
		(email:yahao.chen@rug.nl).}
\address[Second]{Normandie Universit\'e, INSA-Rouen, LMI, 76801 Saint-Etienne-du-Rouvray, France (e-mail: witold.respondek@insa-rouen.fr).}

\begin{abstract}
 In this	paper, we relate the feedback canonical form \textbf{FNCF}  \cite{loiseau1991feedback} of differential-algebraic control systems (DACSs) with  the famous Morse canonical form \textbf{MCF} \cite{morse1973structural},\cite{molinari1978structural} of ordinary differential equation control systems (ODECSs). First, a procedure called \red{an} explicitation (with driving variables) is proposed to connect the \red{two above categories} of control systems by attaching \red{to} a DACS  a class of  ODECSs with two kinds of inputs (the original control input $u$ and a vector of driving variables $v$). Then, we show that any ODECS with two kinds of inputs can be transformed into its extended \textbf{MCF} via two intermediate forms: the extended Morse triangular form  and the extended Morse normal form. Next, we illustrate that the \textbf{FNCF} of a DACS and the  extended \textbf{MCF} of the explicitation system \red{have} a perfect one-to-one correspondence. At last, an algorithm is proposed to transform a given DACS into its \textbf{FBCF} via the explicitation procedure and a numerical example is given to show the efficiency of the proposed algorithm.

\end{abstract}
\begin{keyword}
differential-algebraic equations, {ordinary differential equations}, control systems,  explicitation, Morse canonical form, feedback canonical form\\
\MSC[2020]  15A21, 34A09, 34H05, 93C05, 93C15
\end{keyword} 
\end{frontmatter}
\section{Introduction}\label{section:1}
Consider a linear differential-algebraic control system (DACS) of the form
\begin{align}\label{Eq:DAEcontrol1}
\Delta^u:E\dot{x}=Hx+Lu,
\end{align}
where {$x\in\mathscr X\cong \mathbb{R}^n$} is called the ``generalized'' state, $u\in \mathbb {R}^m$ is the vector of control inputs, and where $E\in \mathbb{R}^{l\times n}$, $H\in \mathbb{R}^{l\times n}$ and $L\in \mathbb{R}^{l\times m}$.  A linear  DACS of  {the} form (\ref{Eq:DAEcontrol1})  will be denoted by $\Delta^u_{l,n,m}=(E,H,L)$ or, simply, $\Delta^u$.  \blue{In the case of the control $u$ being absent, the system becomes a}  linear differential-algebraic equation (DAE) $E\dot x=Hx$, which is called regular if $l=n$ and   $sE-H\in \mathbb R^{n\times n}[s]\backslash0$.
A detailed exposition of the theory of linear DAEs and DACSs can be {consulted} in the textbooks \cite{dai1989singular},\cite{campbell1980singular} and the survey paper \cite{lewis1986survey}. Early {results} on linear DAEs can be {traced back to}  two famous canonical forms of {the} matrix pencil $sE-H$ given by Weierstrass \cite{Weierstrass1868} and Kronecker \cite{kronecker1890algebraische}.   The following literature discusses the normal forms and canonical forms of linear DAE systems.  The authors of \cite{helmke1989canonical} proposed a  canonical from for controllable and regular DACSs.  Several forms for regular systems based on their controllability and impulse controllability were  given in  \cite{glusing1990feedback}. In \cite{ozcaldiran1990complete}, a canonical form of general DACSs was discussed.  More recently,  a normal form {based} on  impulse-controllability and impulse-observability of DACSs {was proposed in} \cite{trenn2009normal}, and a quasi-Weierstrass and a quasi-Kronecker triangular/normal forms of DAEs were given in \cite{BERGER20124052} and \cite{Berger2012}, respectively.  In the present paper, we discuss the feedback canonical form \textbf{FBCF} {obtained} in \cite{loiseau1991feedback} (we restate it as Theorem \ref{Cor:FCF} of the present paper) for general linear DACSs, which plays an important role in, e.g. controllability analysis \cite{berger2013controllability},  regularization problems \cite{bunse1999feedback},\cite{berger2015regularization}, pole assignment \cite{loiseau2009pole},\cite{bonilla1993external} and stabilization \cite{berger2014zero}. The \textbf{FBCF} of DACSs is actually an extension of the Kronecker canonical form  of general linear DAEs. Some methods (most are numerical) of transforming a DAE into its Kronecker canonical form can be {found} in \cite{VANDOOREN1979103},\cite{varga1996computation},\cite{beelen1988improved}.

\red{In \cite{chen2021geometric}, we proposed a notion, called explicitation, to connect DAEs with control systems. In the present paper, we will propose a new explicitation procedure  called \emph{explicitation with driving variables} (see Definition \ref{Def:Qvexpl_lin}), and differences and relations of the two explicitation methods are \blue{discussed} in Remark \ref{Rem:dr}. }  Since {the vector of driving variables {$v$} enters} {statically into} the system  ({similarly} as the control input $u$),  we can regard it as another kind of input. More specifically, the \emph{explicitation with driving variables} of a DACS is a class of ODECSs with two kinds of inputs of the form:
\begin{align}\label{Eq:ODEcontrol}
\Lambda^{uv}:\left\lbrace  {\begin{array}{*{20}{l}}
	\dot x=Ax+B^uu+B^vv\\
	y = Cx+D^uu,
	\end{array}}\right.	
\end{align}
where $A\in \mathbb{R}^{n\times n}$, $B^u\in \mathbb{R}^{n\times m}$, $B^v\in \mathbb{R}^{n\times s}$, $C\in \mathbb{R}^{p\times n}$ and $D^u\in \mathbb{R}^{p\times m}$, where $u\in \mathbb R^m$ is the vector of control variables and $v\in\mathbb R^s$ is the vector of driving variables. An ODECS of  {the} form (\ref{Eq:ODEcontrol}) will be denoted by $\Lambda^{uv}_{n,m,s,p}=(A,B^u,B^v,C,D^u)$ or, simply, $\Lambda^{uv}$. Note that although both $u$ and $v$ {may be considered} as inputs of system (\ref{Eq:ODEcontrol}),  we distinguish {them because they play different roles for the system  and, as a consequence, their feedback transformation rules are different} (see Remark \ref{rem:EM-equi}).   	 \red{Observe that we can express an ODECS  $\Lambda^{uv}$ of the form  (\ref{Eq:ODEcontrol}), as a classical ODECS $\Lambda^w=(A,B^w,C,D^w)$ of  {the}  form 
	\begin{align}\label{Eq:ODEcontrol2}
	\Lambda^u:\left\lbrace {\begin{array}{l}
		\dot x=Ax+B^ww\\
		y = Cx+D^ww,
		\end{array}}\right.	
	\end{align}	
	by denoting $w=[u^T,v^T]^T$, $B^w=[ {\begin{matrix}
		B^u&B^v
		\end{matrix}} ]$ and $D^w=[ {\begin{matrix}
		D^u&0
		\end{matrix}} ]$. Throughout the paper, depending on the context, we will use either $\Lambda^{uv}$ or $\Lambda^w$ to denote an ODECS with two kinds of inputs.}
	
	We use Figure \ref{Fig:1} to show the relations of the results of the paper. The purpose of this paper is to find {an} efficient \emph{geometric} way to transform a DACS {$\Delta^u$} into its \blue{feedback canonical form} \textbf{FBCF} via the explicitation procedure.  As we have pointed out, the \textbf{FBCF} is a generalization, on one hand, of the classical Kronecker form (because a DACS is a differential-algebraic equation) and one the other hand, of the Brunovsky canonical form \cite{brunovsky1970classification} (because a DACS is a control system). The explicitation procedure allows us to attach to a DACS a control system $\Lambda^{uv}$ with an output $y$ (defining the algebraic constraint as $y=0$) and to study the double nature of a DACS (differential-algebraic and control-theoretic) simultaneously by analyzing $\Lambda^{uv}$. More specifically, instead of using transformations directly on a DACS, we will {first transform an ODECS $\Lambda^{uv}$, given by the explicitation of our DACS, into its canonical form (called the extended Morse canonical form \textbf{EMCF}, see Theorem~\ref{Thm:EMCF})}. Then by the {relation} between DACSs and ODECSs given in Section \ref{sec:2}, we can easily get the \textbf{FBCF}  from the \textbf{EMCF}. Moreover, inspired by the quasi-Kronecker triangular form of \cite{Berger2012},  we will propose a Morse triangular form \textbf{MTF} (see Proposition~\ref{Pro:MTF}) to transform an ODECS (with one {type of} controls) into its Morse normal form \textbf{MNF} (see Proposition~\ref{Pro:MMNF}).    Note that a procedure of transforming an ODECS $\Lambda^u$ into its \textbf{MCF} was given by Morse \cite{morse1973structural} for $D^u=0$ and by Molinari \cite{molinari1978structural} for the general case $D^u\ne0$. We propose to do it via two intermediate normal forms \textbf{MTF} and \textbf{MNF}. 
\tikzstyle{process} = [rectangle, minimum width=1.2cm, minimum height=0.4cm, text centered, draw=black]
\tikzstyle{arrow} = [thick,double,double distance=1pt,->,>=stealth] 
\begin{figure}[htp!]
\centering
\begin{tikzpicture}[node distance=1.5cm]
	\node[process](A11){$\Delta^u$};
	\node[process, below of = A11, yshift = -0.3cm](A21){$\Lambda^{uv}$};
	\node[process, right of = A11, xshift = 9cm](A14){\textbf{FBCF} \cite{loiseau1991feedback}};
	\node[process, below of = A21, yshift = -0.3cm](A31){$\Lambda^{u}$};
	\node[process, right of = A21, xshift =2cm](A22){\textbf{EMTF}};
	\node[process, right of = A22, xshift =2cm](A23){\textbf{EMNF}};
	\node[process, right of = A23, xshift =2cm](A24){\textbf{EMCF}};
	\node[process, right of = A31, xshift =2cm](A32){\textbf{MTF}};
	\node[process, right of = A32, xshift =2cm](A33){\textbf{MNF}};
	\node[process, right of = A33, xshift =2cm](A34){\textbf{MCF} \cite{morse1973structural},\cite{molinari1978structural}};
	\coordinate (point1) at (-3cm, -6cm);
	\draw [arrow] (A11) -- node[right]{explicitation, see Def.\ref{Def:Qvexpl_lin}}(A21);
	\draw [arrow] (A24) -- node[left]{implicitation, see Sec.\ref{sec:4}}(A14);
	\draw [arrow] (A11) -- node[below]{Theorem \ref{Cor:FCF}}(A14);
	\draw [arrow] (A31) --node[right]{extension}(A21);
	\draw [arrow] (A32) --node[right]{extension}(A22);
	\draw [arrow] (A33) --node[right]{extension}(A23);
	\draw [arrow] (A34) --node[right]{extension}(A24);
	\draw [arrow] (A21) --node[above]{Thm.\ref{Thm:EMTF}}(A22);
	\draw [arrow] (A22) --node[above]{Thm.\ref{Thm:EMNF}}(A23);
	\draw [arrow] (A23) --node[above]{Thm.\ref{Thm:EMCF}}(A24);
	\draw [arrow] (A31) --node[above]{Prop.\ref{Pro:MTF}}(A32);
	\draw [arrow] (A32) --node[above]{Prop.\ref{Pro:MMNF}}(A33);
	\draw [arrow] (A33) --node[above]{\cite{molinari1978structural}}(A34);
\end{tikzpicture} 
\caption{The relations of the results in the  paper} \label{Fig:1}
\end{figure}
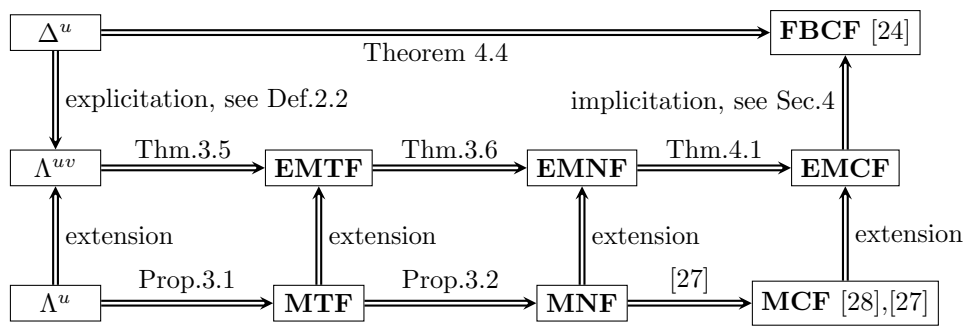 

\blue{We use the following  abbreviations throughout the paper:}\\
\begin{small}
	\begin{tabular}[htp!]{p{0.07\columnwidth}p{0.45\columnwidth}@{\ \   }p{0.07\columnwidth}p{0.3\columnwidth}}
		DAE& differential-algebraic equation& \textbf{MCF}&Morse canonical form\\ 
		DACS& differential-algebraic control system& \textbf{EMTF}&extended Morse triangular form  \\  
	  ODECS&ordinary differential equation control system&\textbf{EMNF}& extended Morse normal form \\
	    \textbf{MTF}& Morse triangular form& \textbf{EMCF}& extended Morse canonical form  \\ 
	    \textbf{MNF}& Morse normal form&  \textbf{FBCF}& feedback canonical form
	\end{tabular}
\end{small}
\\

This paper is organized as follows.  In Section \ref{sec:2}, we introduce the explicitation with driving variables procedure and build geometric connections between DACSs and ODECSs. In Section \ref{sec:3}, we show {a method} of constructing the \textbf{MTF} and the \textbf{MNF} for classical ODECSs of the form (\ref{Eq:ODEcontrol2}), then we extend them to the \textbf{EMTF} and the \textbf{EMNF} for ODECSs  (with two kinds of inputs) of the form (\ref{Eq:ODEcontrol}).  In Section \ref{sec:4}, 
we propose \blue{the} \textbf{EMCF} for ODECSs of the form (\ref{Eq:ODEcontrol}), which allows to  construct the \textbf{FBCF} of DACSs as a corollary \blue{and we} formulate the construction of the  \textbf{FBCF} via the explication procedure as an algorithm. In Section \ref{sec:5}, we give a numerical example to show the  {efficiency} of the algorithm. Section \ref{sec:proofs} and~\ref{sec:7} {contain} proofs and conclusions  of the paper, respectively. The definitions of  geometric {invariant} subspaces for ODECSs and DACSs {are given} in Appendix. \blue{Throughout, we will use the  following notations:}

\begin{small}
	\begin{tabular}[htp!]{p{0.15\columnwidth}p{0.75\columnwidth}}
		$\mathcal C^k$& the class of $k$-times  differentiable functions\\
		$\mathbb{N}$ &  the set of natural numbers with zero and $\mathbb{N}^+=\mathbb{N}\backslash \{0 \}$\\
		${\mathbb{R}^{n \times m}}$ & the set of real valued matrices with $n$ rows and $m$ columns \\
		$Gl\left( {n,\mathbb{R}} \right)$ & the group of nonsingular matrices of $\mathbb{R}^{n \times n}$\\	
		$\ker A$ & the kernel of the map given by {a} matrix $A$\\	
		${\mathop{\rm Im\,}\nolimits} A$ & the image of the map given by {a} matrix $A$\\
		${\rm rank\,}A$ & the rank of {a} matrix $A$\\
		$I_n$ &  {the} identity matrix of size $n\times n$ for $n\in \mathbb{N}^+$\\
		$0_{n\times m}$ &  {the} zero matrix of size $n\times m$ for $n,m\in \mathbb{N}^+$\\
		${A^T}$ & the transpose of {a} matrix $A$\\		
		${A^{-1}}$ & the inverse of {a} matrix $A$\\		
		${A\mathscr B}$ &$\{Ax\,|\,x\in \mathscr B\} $, the image of  {a space} $\mathscr B$ under a map given by a matrix $A$\\
		${A^{-1}\mathscr B}$ &$\{x\in \mathbb R^n\,|\,Ax\in \mathscr B\} $, the preimage of  {a space} $\mathscr B$ under a map given by a matrix $A$\\	
		${A^{-T}}\mathscr B$ &  $(A^T)^{-1}\mathscr B$\\
		$\mathscr A^{\bot}$	& $\{ x\in \mathbb R^n\,|\,\forall a\in \mathscr A: x^Ta=0\}$, the orthogonal complement of  {a subspace $\mathscr A\subseteq \mathbb R^n$} \\
		$ A^{\dagger}$	&  the right inverse of a full row rank matrix $A\in \mathbb R^{n\times m}$, i.e., $AA^{\dagger}=I_n$\\
		$x^{(k)}$& {$k$-th-order} derivative of a {function} $x(t)$
	\end{tabular}
\end{small}   
\section{Explicitation with driving variables for linear DACSs}\label{sec:2}
 {A} solution of $\Delta^u$ is a map $(x(t),u(t)):\mathbb R\rightarrow{\mathscr X}\times \mathbb R^m$ {with $x(t)\in\mathcal C^1$ and $u(t)\in \mathcal C^0$ satisfying} $E\dot x(t)=Hx(t)+Lu(t)$.   {Notice that to some $\mathcal C^0$-controls $u(t)$, there may not correspond any $\mathcal C^1$-solution $x(t)$ because of algebraic relations between $u_i$'s and $x_j$'s present in $\Delta^u$ of the form (\ref{Eq:DAEcontrol1}).} 
\begin{defn}\label{Def:ex-fb-eq}
	Two DACSs $ \Delta^u_{l,n,m}=(E,H,L)$ and  $\tilde \Delta^{\tilde u}_{l,n,m}=(\tilde E,\tilde H,\tilde L) $ are called externally feedback equivalent, shortly ex-fb-equivalent, if there exist matrices $Q\in Gl(l,\mathbb{R})$, $P\in Gl(n,\mathbb{R})$, $F\in \mathbb{R}^{m\times n}$ and $G\in Gl(m,\mathbb{R})$ such that 
	\begin{align}\label{Eq:ex-fb-eq}
	\begin{array}{ccc}
	{\tilde{E}=QEP^{-1},}&\tilde{H}=Q(H+LF)P^{-1},&\tilde{L}=Q LG.
	\end{array}
	\end{align}
	We denote the ex-fb-equivalence of two DACSs as $ \Delta^u\mathop  \sim \limits^{ex-fb} \tilde \Delta^{\tilde u}$.
\end{defn}
 Now we introduce the \emph{explicitation with driving variables} procedure for $ \Delta^u$ as follows.
\begin{itemize}
	\item Denote the rank of $E$ by $q\in \mathbb N$, define $s=n-q$ and $p=l-q$. Then there exists a matrix $Q\in Gl(l,\mathbb R)$ such that 
	$
	QE=\left[ {\begin{smallmatrix}
		E_1\\
		0
		\end{smallmatrix}} \right]$, where $E_1\in \mathbb{R}^{q\times n}$ and ${\rm rank\,} E_1=q$. {Via $Q$,} DACS $\Delta^u$ is ex-fb-equivalent to
	\begin{align}\label{Eq:expl1}
	\left[ {\begin{matrix}
		E_1\\
		0
		\end{matrix}} \right]\dot x=\left[ {\begin{matrix}
		H_1\\
		H_2
		\end{matrix}} \right]x+\left[ {\begin{matrix}
		L_1\\
		L_2
		\end{matrix}} \right]u,
	\end{align} 
	where $QH=\left[ {\begin{smallmatrix}
		H_1\\
		H_2
		\end{smallmatrix}} \right]$, $QL=\left[ {\begin{smallmatrix}
		L_1\\
		L_2
		\end{smallmatrix}} \right]$, and where $H_1\in \mathbb{R}^{q\times n}$, $H_2\in \mathbb{R}^{(l-q)\times n}$, $L_1\in \mathbb{R}^{q\times m}$, $L_2\in \mathbb{R}^{(l-q)\times m}$.
	\item   Consider the differential part of (\ref{Eq:expl1}): \setcounter{equation}{4}
		\begin{subequations}
			\begin{align}\label{Eq:expl1a}
			E_1\dot x=H_1x+L_1u.
			\end{align}
		\end{subequations}
		\setcounter{equation}{5}
The matrix $E_1$ is of full row rank $q$, so let $E^{\dagger}_1\in \mathbb{R}^{n\times q}$ denote its right inverse, i.e., $E_1E^{\dagger}_1=I_q$. Set  $A=E^{\dagger}_1H_1$ and $B^u=E^{\dagger}_1L_1$. In general, $w\in\mathbb R^n$ satisfies the linear equation $E_1w=b$, where $E_1:\mathbb R^{n}\to \mathbb R^q$ is of full row rank $q$, if and only if $w\in E^{\dagger}_1b+\ker E_1$. It follows that $x(t)$ satisfies (\ref{Eq:expl1a}) if and only if 
\begin{equation}\label{Eq:inclusion}
\dot x\in Ax+B^uu+\ker E_1.
\end{equation}
\item Choose a full column rank matrix $B^v\in \mathbb R^{n\times s}$ such that ${\rm Im\,} B^v=\ker E_1=\ker E$ (note {that} the kernels of $E_1$ and $E$ {coincide} since {any} invertible $Q$ preserves the kernel).  {Then the vector $v\in \mathbb R^s$ of driving variables (\blue{see Remark \ref{Rem:dr} for a control-theory interpretation of $v$})   parameterizes the subspace $\ker E_1={\rm Im\,} B^v$ via $B^vv$ and the solutions of the differential inclusion (\ref{Eq:inclusion}), and thus of (\ref{Eq:expl1a}), correspond to the solutions of}
\begin{align}\label{Eq:ODEcontrolnooutput1}
	\dot x= Ax+B^uu+B^vv .
	\end{align} 
\item We claim, see Proposition \ref{Pro:solution}  below, that all solutions of (\ref{Eq:expl1}) (and thus of the original DAE $\Delta^u$) are in one-to-one correspondence with all solutions (corresponding to all {driving variables} $v(t)$) of 
	\begin{align}\label{Eq:expl3}
	\left\{ {\begin{array}{*{20}{l}}
		\dot x=Ax+B^uu+B^vv\\
		0 = Cx+D^uu,
		\end{array}}\right.	
	\end{align}
	where $C=H_2\in \mathbb{R}^{p\times n}$ and $D^u=L_2\in \mathbb{R}^{p\times m}$. Recall {that} a control system of the form (\ref{Eq:ODEcontrol}) {is} denoted by $\Lambda^{uv}_{n,m,s,p}=(A,B^u,B^v,C,D^u)$. It is {immediately to} see that equation (\ref{Eq:expl3}) can be {obtained from the}  ODECS $\Lambda^{uv}$  by setting {the} output $y=0$. In the above way, we attach an ODECS $\Lambda^{uv}$ to a DACS $\Delta^u$.
\end{itemize}
The above procedure of attaching a control system $\Lambda^{u,v}$ to a DACS $\Delta^u$ will be called \emph{explicitation with driving variables} and is formalized as follows.
\begin{defn}\label{Def:Qvexpl_lin}
	Given a DACS $\Delta^u_{l,n,m}=(E,H,L)$, by a $(Q,v)$-explicitation, we will call a control system $\Lambda^{uv}=(A,B^u,B^v,C,D^u)$, with
	$$ 
	A=E^{\dagger}_1H_1, \ \ B^u=E^{\dagger}_1L_1, \ \ {\rm Im\,}B^v=\ker E_1=\ker E, \ \ C=H_2, \ \ D^u=L_2,
	$$ 
	where $$
	QE=\left[ {\begin{smallmatrix}
		E_1\\
		0
		\end{smallmatrix}} \right], \ \  QH=\left[ {\begin{smallmatrix}
		H_1\\
		H_2
		\end{smallmatrix}} \right], \ \ QL=\left[ {\begin{smallmatrix}
		L_1\\
		L_2
		\end{smallmatrix}} \right].$$
	The {class} of {all $(Q,v)$-explicitations} will be called the  \emph{explicitation with driving variables class} or, shortly \emph{explicitation class}, of $\Delta^u$, denoted by $\mathbf{Expl}(\Delta^u)$. If a particular ODECS $\Lambda^{uv}$ belongs to the explicitation class $\mathbf{Expl}(\Delta^u)$, we will write $\Lambda^{uv}\in ~\mathbf{Expl}(\Delta^u)$.
\end{defn}
The definition of the  {explicitation class} $\mathbf{Expl}(\Delta^u)$ suggests {that} a given $\Delta^u$ has many $(Q,v)$-explicitations. Indeed, the construction of $\Lambda^{uv}\in \mathbf{Expl}(\Delta^u)$ is not unique at three stages: there is a freedom in choosing $Q$, $E_1^{\dagger}$, and $B^v$.   We show in the following proposition  that $\mathbf{Expl}(\Delta^u)$ is actually an ODECS defined up to a $v$-feedback transformation, an \emph{output injection} and  \emph{an output {transformation}}, that is,
a class of ODECSs. 

\begin{pro}\label{Pro:LinDAEexpl}
 {Assume that an ODECS $\Lambda^{uv}_{n,m,s,p}=(A,B^u,B^v,C,D^u)$ is a $(Q,v)$-explicitation of a DACS $\Delta^u_{l,n}=(E,H,L)$ corresponding to a choice of invertible matrix $Q$, right inverse $E_1^{\dagger}$, and matrix $B^v$. Then $\tilde \Lambda^{u\tilde v}_{n,m,s,p}=(\tilde A,\tilde B^u,\tilde B^{\tilde v},\tilde C,\tilde D^u)$ is a $(\tilde Q,\tilde v)$-explicitation of  $\Delta^u$ corresponding to a choice of invertible matrix $\tilde Q$, right inverse $\tilde E_1^{\dagger}$, and matrix $\tilde B^{\tilde v}$  if and only if $\Lambda^{uv}$ and $\tilde \Lambda^{u\tilde v}$ are equivalent via a $v$-feedback transformation of the form $v=F_vx+Ru+T^{-1}_v\tilde v$, an output injection $Ky=K(Cx+D^uu)$ and an output multiplication $\tilde y=T_yy$, 
	which map }
\begin{align}\label{Eq:mapDACS}
	\begin{array}{c}
A\mapsto \tilde A=A+KC+B^vF_v, \ \ \ B^u\mapsto  \tilde B^u=B^u+B^vR+KD^u, \ \ \ B^v\mapsto  \tilde B^{\tilde v}=B^vT^{-1}_v,\\
C\mapsto \tilde C=T_yC, \ \ \ D^u\mapsto  \tilde D^u=T_yD^u,
\end{array}  
\end{align}
 where $F_v,K, R,T_v,T_y$ are matrices of appropriate sizes, and $T_v$ and $T_y$ are invertible. 
\end{pro}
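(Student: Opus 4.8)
The plan is to prove the two directions of the ``if and only if'' by tracking how the three sources of non-uniqueness in the $(Q,v)$-explicitation—the choice of $Q$, of the right inverse $E_1^{\dagger}$, and of the full-column-rank $B^v$—translate into the five transformation matrices $F_v,K,R,T_v,T_y$ acting on the quintuple $(A,B^u,B^v,C,D^u)$. I will work directly from the formulas in Definition~\ref{Def:Qvexpl_lin}. The main technical bookkeeping is to parametrize each admissible change:
\begin{itemize}
\item Since $QE=\left[\begin{smallmatrix}E_1\\0\end{smallmatrix}\right]$ with $E_1$ of full row rank $q$, any other admissible $\tilde Q$ yields $\tilde Q E=\left[\begin{smallmatrix}\tilde E_1\\0\end{smallmatrix}\right]$. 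Comparing the two, the block structure forces $\tilde Q Q^{-1}$ to be block lower-triangular compatible with the $(q,p)$ splitting, so $\tilde E_1=SE_1$ with $S\in Gl(q,\mathbb R)$, $\tilde H_2=T_y H_2+(\text{something})H_1$ and $\tilde L_2=T_y L_2+(\text{something})L_1$, where the $(2,2)$ block of $\tilde Q Q^{-1}$ is the invertible matrix $T_y$ acting on the output; this is where $T_y$, and the output-injection-like coupling producing $K$, enter.
\item For a fixed $E_1$, any two right inverses differ by an element of $\ker E_1={\rm Im\,}B^v$, i.e. $\tilde E_1^{\dagger}=E_1^{\dagger}+B^v M$ for some matrix $M$; substituting into $A=E_1^{\dagger}H_1$ and $B^u=E_1^{\dagger}L_1$ produces precisely the terms $B^v F_v$ and $B^v R$, identifying $F_v=MH_1$ (modulo the final relations) and $R=ML_1$.
\item Finally, two choices of $B^v$ with the same image differ by an invertible right factor, $\tilde B^{\tilde v}=B^vT_v^{-1}$, which is exactly the $v$-feedback reparametrization $v=T_v^{-1}\tilde v$ and gives the last formula.
\end{itemize}

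For the forward (``only if'') direction I would assemble these three parametrized freedoms and show that the composite effect of a general $(\tilde Q,\tilde E_1^{\dagger},\tilde B^{\tilde v})$ on the quintuple is exactly the map displayed in~(\ref{Eq:mapDACS}). The key point is that the contribution of $S=$ (the $(1,1)$ block of $\tilde Q Q^{-1}$) and of the lower-left coupling block combine with $H_2$, $L_2$ to give the additive $KC$ and $KD^u$ terms in $\tilde A$ and $\tilde B^u$; here one must verify that the ``$H_1$-direction'' corrections from changing $Q$ and from changing $E_1^{\dagger}$ can be absorbed consistently into a single $F_v$ and $R$, using $A=E_1^{\dagger}H_1$ and ${\rm Im\,}B^v=\ker E_1$ to rewrite intermediate terms as $B^v(\cdots)$. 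I would also record the compatibility $\tilde C=T_yC$, $\tilde D^u=T_yD^u$ directly from the $(2,\cdot)$ block of $\tilde Q Q^{-1}$.

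For the reverse (``if'') direction, given matrices $F_v,K,R,T_v,T_y$ (with $T_v,T_y$ invertible) satisfying~(\ref{Eq:mapDACS}), I would reconstruct an admissible triple $(\tilde Q,\tilde E_1^{\dagger},\tilde B^{\tilde v})$ realizing them: set $\tilde B^{\tilde v}=B^vT_v^{-1}$ (legitimate since it has the same image $\ker E$), choose $\tilde E_1^{\dagger}=E_1^{\dagger}+B^vM$ with $M$ chosen so that $B^vM H_1$ and $B^vML_1$ reproduce the required $B^vF_v$ and $B^vR$ shifts (solvable because $B^v$ has full column rank, so $M$ is determined on ${\rm Im\,}B^v$-complement data via $H_1,L_1$), and build $\tilde Q=\left[\begin{smallmatrix}S&0\\ \star&T_y\end{smallmatrix}\right]Q$ with the lower-left block encoding the output injection $K$. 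Then a direct substitution into the explicitation formulas recovers $(\tilde A,\tilde B^u,\tilde B^{\tilde v},\tilde C,\tilde D^u)$.

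The main obstacle I anticipate is the forward direction's bookkeeping: disentangling which part of the additive change in $A$ and $B^u$ comes from the $Q$-change (contributing the output-injection terms $KC,KD^u$ and, through the $(1,1)$-block $S$, a similarity-like action) versus from the $E_1^{\dagger}$-change (contributing the $B^vF_v,B^vR$ terms), and verifying that these can be put into the exact normalized form~(\ref{Eq:mapDACS}) without residual cross terms. The crucial algebraic facts that make it work are that $\ker E_1={\rm Im\,}B^v$ (so every $E_1^{\dagger}$-correction lands in ${\rm Im\,}B^v$) and that $E_1$ full row rank makes $S$ invertible; once $S$ is absorbed appropriately (it can be taken as the identity after renormalizing $E_1^{\dagger}$, since $SE_1$ has right inverse $E_1^{\dagger}S^{-1}$), the residual freedom matches the five-parameter family claimed.
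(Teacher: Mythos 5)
Your overall strategy---parametrizing the three sources of freedom ($Q$, $E_1^{\dagger}$, $B^v$) and matching them to the five matrices $F_v,K,R,T_v,T_y$---is exactly the route the paper takes, and your treatment of the second and third freedoms is correct: two right inverses of $E_1$ differ by a matrix whose columns lie in $\ker E_1={\rm Im\,}B^v$, which produces the $B^vF_v$ and $B^vR$ terms, and two choices of $B^v$ with the same image differ by an invertible right factor $T_v^{-1}$.

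However, your bookkeeping of the $Q$-freedom is inverted, and this is a genuine gap that breaks both directions. Write $Q'=\tilde QQ^{-1}=\left[\begin{smallmatrix}Q_1&Q_2\\Q_3&Q_4\end{smallmatrix}\right]$ in the $(q,p)$ splitting. From $Q'\left[\begin{smallmatrix}E_1\\0\end{smallmatrix}\right]=\left[\begin{smallmatrix}\tilde E_1\\0\end{smallmatrix}\right]$ you get $Q_3E_1=0$, and since $E_1$ has full row rank $q$ (it is surjective onto $\mathbb R^q$), this forces $Q_3=0$: $Q'$ is block \emph{upper}-triangular, not lower-triangular. Consequently $\tilde H_2=Q_4H_2$ exactly---there is no ``$(\text{something})H_1$'' term, and there cannot be one, since the target relation is $\tilde C=T_yC$ with no admixture of the dynamics; your claimed form of $\tilde H_2$ is already inconsistent with the statement you are proving. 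The output-injection term $KC$ in $\tilde A$ instead comes from the \emph{upper-right} block: $\tilde H_1=Q_1H_1+Q_2H_2$, so that $\tilde E_1^{\dagger}\tilde H_1$ picks up a multiple of $H_2=C$ (the paper realizes a prescribed $K$ by taking $Q'=\left[\begin{smallmatrix}I_q&E_1K\\0&T_y\end{smallmatrix}\right]$ and identifies $K=E_1^{\dagger}Q_1^{-1}Q_2$ in the converse). Your proposed reconstruction $\tilde Q=\left[\begin{smallmatrix}S&0\\ \star&T_y\end{smallmatrix}\right]Q$ is inadmissible unless $\star=0$ (otherwise $\tilde QE$ has a nonzero bottom block), and with $\star=0$ it encodes no output injection at all; likewise your forward-direction claim that the lower-left block ``combines with $H_2$, $L_2$'' to give $KC$ and $KD^u$ rests on the same misidentification, so the argument as written can never produce those terms. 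Fixing this means redoing the $Q$-analysis with the correct upper-triangular structure (and, in the reverse direction, checking that the required $M$ solving $M[H_1\;\;L_1]=[F_v\;\;R]$-type conditions exists, a point you currently pass over); after that your plan coincides with the paper's proof.
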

{The following proposition shows that solutions of any DACS are in one-to-one correspondence with solutions of its $(Q,v)$-explicitations.}
 \begin{pro}\label{Pro:solution}
Consider  $\Delta^u_{l,n,m}=(E,H,L)$ {and} let an ODECS  $\Lambda^{uv}_{n,m,s,p}=(A,B^u,B^v,C,D^u)$ be {a} $(Q,v)$-explicitation of $\Delta^u$, i.e., $\Lambda^{uv}\in \mathbf{Expl}(\Delta^u)$. Then a curve $(x(t),u(t))$ with $x(t)\in \mathcal C^1$ and $u(t)\in \mathcal C^0$ is a solution of $\Delta^u$ if and only if there exists $v(t)\in \mathcal C^0$ such that $(x(t),u(t),v(t))$ is a solution {of $\Lambda^{uv}$ respecting the output constraints $y =0$,  i.e., a solution of  (\ref{Eq:expl3}).}
 \end{pro}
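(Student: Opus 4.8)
The plan is to unwind the definitions and trace the equivalences established in the construction of the $(Q,v)$-explicitation. Since $\Lambda^{uv}$ is a $(Q,v)$-explicitation of $\Delta^u$, by Definition \ref{Def:Qvexpl_lin} there exist an invertible matrix $Q$, a right inverse $E_1^{\dagger}$, and a full-column-rank matrix $B^v$ with ${\rm Im\,}B^v=\ker E_1=\ker E$, such that $A=E_1^{\dagger}H_1$, $B^u=E_1^{\dagger}L_1$, $C=H_2$, $D^u=L_2$, where $QE=\bigl[\begin{smallmatrix}E_1\\0\end{smallmatrix}\bigr]$, $QH=\bigl[\begin{smallmatrix}H_1\\H_2\end{smallmatrix}\bigr]$, $QL=\bigl[\begin{smallmatrix}L_1\\L_2\end{smallmatrix}\bigr]$. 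The first step is to observe that because $Q\in Gl(l,\mathbb R)$ is invertible, the equation $E\dot x=Hx+Lu$ holds if and only if $QE\dot x=QHx+QLu$ holds, which splits into the two blocks
\begin{align}\label{Eq:proofsplit}
E_1\dot x=H_1x+L_1u,\qquad 0=H_2x+L_2u.
\end{align}
Thus $(x(t),u(t))$ solves $\Delta^u$ if and only if it solves both equations in \eqref{Eq:proofsplit}. The second, lower block is precisely the output constraint $0=Cx+D^uu$ after substituting $C=H_2$ and $D^u=L_2$, so the entire content of the proof reduces to showing that the differential block $E_1\dot x=H_1x+L_1u$ is solved by $(x(t),u(t))$ if and only if there exists a $\mathcal C^0$ curve $v(t)$ with $\dot x=Ax+B^uu+B^vv$.

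The second step handles this differential block, and this is where the only real content lies. I would invoke the linear-algebra fact already recorded in the excerpt preceding \eqref{Eq:inclusion}: since $E_1$ has full row rank $q$, a vector $w$ satisfies $E_1w=b$ if and only if $w\in E_1^{\dagger}b+\ker E_1$. Applying this pointwise in $t$ with $b=H_1x(t)+L_1u(t)$ and $w=\dot x(t)$, and recalling $A=E_1^{\dagger}H_1$, $B^u=E_1^{\dagger}L_1$, we get that $(x(t),u(t))$ satisfies the differential block if and only if
\begin{equation}\label{Eq:proofincl}
\dot x(t)\in Ax(t)+B^uu(t)+\ker E_1
\end{equation}
for every $t$. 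Since ${\rm Im\,}B^v=\ker E_1$ and $B^v$ has full column rank, the inclusion \eqref{Eq:proofincl} holds if and only if there is a (unique, because of the full column rank) vector $v(t)$ with $\dot x(t)-Ax(t)-B^uu(t)=B^vv(t)$, i.e. $\dot x=Ax+B^uu+B^vv$. Combining this with the first step gives both implications of the stated equivalence.

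The one point demanding genuine care, rather than the algebraic bookkeeping, is the \emph{regularity} of the driving variable $v(t)$: the proposition asserts not merely the existence of some $v(t)$ but of a continuous one, $v(t)\in\mathcal C^0$. For the forward implication this requires an argument that the $v(t)$ extracted from \eqref{Eq:proofincl} inherits continuity. Since $B^v$ is a constant full-column-rank matrix, it has a left inverse $(B^v)^{\dagger}$, and we may write $v(t)=(B^v)^{\dagger}\bigl(\dot x(t)-Ax(t)-B^uu(t)\bigr)$; the hypotheses $x(t)\in\mathcal C^1$ and $u(t)\in\mathcal C^0$ then make the right-hand side continuous, so $v(t)\in\mathcal C^0$, and uniqueness of $v(t)$ follows from the injectivity of $B^v$. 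For the converse implication, given a $\mathcal C^0$ curve $v(t)$ and a solution $(x(t),u(t),v(t))$ of \eqref{Eq:expl3} with $x(t)\in\mathcal C^1$, one simply reads off that $\dot x-Ax-B^uu=B^vv\in\ker E_1$ and that the output constraint $0=Cx+D^uu$ reproduces the lower block of \eqref{Eq:proofsplit}, whence $(x(t),u(t))$ solves $\Delta^u$. I expect no serious obstacle beyond making this continuity/uniqueness claim explicit; the bulk of the argument is a direct translation through the definitions and the full-rank splitting of $Q$.
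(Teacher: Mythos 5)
Your proposal is correct, and its overall skeleton (use invertibility of $Q$ to split $\Delta^u$ into the differential block $E_1\dot x=H_1x+L_1u$ and the algebraic block $0=H_2x+L_2u=Cx+D^uu$, then reduce everything to the differential block) matches the paper's. The difference lies in how the forward direction extracts a continuous $v$. The paper first fixes a \emph{special} representative: after permuting coordinates so that $E_1=[E_1^1\ E_1^2]$ with $E_1^1$ invertible, it takes $E_1^{\dagger}=\bigl[\begin{smallmatrix}(E_1^1)^{-1}\\ 0\end{smallmatrix}\bigr]$ and $B^v=\bigl[\begin{smallmatrix}-(E_1^1)^{-1}E_1^2\\ I_s\end{smallmatrix}\bigr]$, sets $v(t)=\dot x_2(t)$ (continuous because $x\in\mathcal C^1$), and then transfers the conclusion to an arbitrary choice of $E_1^{\dagger}$ and $B^v$ by invoking the $v$-feedback relation of Proposition \ref{Pro:LinDAEexpl}, yielding $\tilde v=-T_vF_vx-T_vRu+T_v\dot x_2$. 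You instead handle an arbitrary admissible pair $(E_1^{\dagger},B^v)$ uniformly: the pointwise affine parametrization $\dot x(t)\in Ax(t)+B^uu(t)+\ker E_1$ together with injectivity of $B^v$ gives a unique $v(t)=(B^v)^{\dagger}\bigl(\dot x(t)-Ax(t)-B^uu(t)\bigr)$, whose continuity is immediate from $x\in\mathcal C^1$, $u\in\mathcal C^0$. Your route is shorter and gives an explicit closed-form for $v$ without any coordinate permutation or appeal to Proposition \ref{Pro:LinDAEexpl}; the paper's route is longer but makes visible exactly how $v$ changes under a change of $E_1^{\dagger}$ and $B^v$, which it reuses elsewhere. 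Both arguments are complete; the backward implication (pre-multiply by $E_1$ and use $E_1B^v=0$) is identical in the two proofs.
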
 
 The proofs of Proposition \ref{Pro:LinDAEexpl} and Proposition \ref{Pro:solution} will be given in Section \ref{ProofThm:equivalence}.   
\begin{rem}\label{Rem:dr}
 Notice that the definition of $(Q,v)$-explicitation in the present paper is different  in two aspects from the $(Q,P)$-explicitation of \cite{chen2021geometric} (or see Chapter II of \cite{chen2019geometric}).  First, in this paper we consider the explicitation of DACSs while in \cite{chen2021geometric} we dealt with DAEs (with no controls).  The second difference is that in $(Q,v)$-explicitation, we keep the original generalized state variables $x$ and add new driving variables $v$ while in $(Q,P)$-explicitation of \cite{chen2021geometric}, we look for a partition $(z_1,z_2)=z=Px$ into state- and control- variables. More specifically,
 consider a DACS  $\Delta^u_{l,n,m}=(E,H,L)$, then via two invertible matrices $Q$ and $P$, the system $\Delta^u$ is ex-fb-equivalent with $F=0$ and $G=I_m$ (or ex-equivalent, according to the terminology of \cite{chen2021geometric}, since here we do not use feedback transformation for $\Delta^u$)  to a pure semi-explicit PSE DACS 
		$$\Delta^u_{PSE}:\left[ \begin{matrix}
			I&0\\
			0&0
		\end{matrix}\right] \left[ \begin{matrix}
			\dot z^1\\
			\dot z^2
		\end{matrix}\right]= \left[ \begin{matrix}
			H_1&H_2\\
			H_3&H_4
		\end{matrix}\right]\left[ \begin{matrix}
			z^1\\
			z^2
		\end{matrix}\right]+\left[ \begin{matrix}
		L_1\\
		L_2
		\end{matrix}\right]u,$$
with $z=\left[ \begin{smallmatrix}
	z^1\\
	z^2
	\end{smallmatrix}\right]=\left[ \begin{smallmatrix}
	P_1x\\
	P_2x
	\end{smallmatrix}\right]=Px$, where $P$ is any invertible map such that $\ker P_1=\ker E$. We attach to $\Delta^u_{PSE}$, the control system 
	\begin{align}\label{Eq:z2u}
	\Lambda^{uz^2}:
\left\lbrace \begin{array}{c@{\ }l}
\dot z^1&=H_1z^1+H_2z^2+L_1u\\
y&=H_3z^1+H_4z^2+L_2u,
\end{array} \right.
	\end{align}
 where $z^2\in \mathscr Z_2=\ker E$ is \red{the} vector of free variables (which perform like inputs), $z^1\in \mathscr Z_1$ is the state such that $\mathscr Z_1\oplus\mathscr Z_2=\mathscr X\cong\mathbb R^n$, and $y$ is the output. The system $\Lambda^{uz^2}$ is called a $(Q,P)$-explicitation of $\Delta^u$ and we will write $\Lambda^{uz^2}\in {\rm Expl}(\Delta^u)$, where ${\rm Expl}(\Delta^u)$ is the explicitation class consisting of all $(Q,P)$-explicitations of $\Delta^u$ (clearly, for a given $\Delta^u$, its $(Q,P)$-explicitation is not unique). Now by adding the equation $\dot z^2=v$, we obtain the (dynamical) prolongation  $\mathbf \Lambda^{uv}$ of $\Lambda^{uz^2}$ 
\begin{align}\label{Eq:prol_z2u}
\mathbf{\Lambda}^{uv}:
\left\lbrace \begin{array}{c@{\ }l}
\dot z^1&=H_1z^1+H_2z^2+L_1u\\
\dot z^2&=v\\
y&=H_3z^1+H_4z^2+L_2u,
\end{array} \right.
\end{align}
  which is actually an $(I_l,v)$-explicitation of $\Delta^u_{PSE}$.  We can summarize the relations between the notions of $(Q,P)$-explicitation and $(Q,v)$-explicitation by the following diagram.  
 \begin{center}
 \begin{tikzpicture}
\matrix (m) [matrix of math nodes,row sep=2em,column sep=2em,minimum width=1em]
{
	\Delta^u & &\Delta^u_{PSE} \\
	& \Lambda^{uz^2}\in {\rm Expl}(\Delta^u)={\rm Expl}(\Delta^u_{PSE})&\\
	\Lambda^{u\tilde v}\in \mathbf{Expl}(\Delta^u) & &\mathbf{\Lambda}^{uv}\in\mathbf{Expl}(\Delta^u_{PSE})\\};
\path[-stealth]
(m-1-1) edge  node [left] {$(Q,\tilde v)$-expl} (m-3-1)
edge  node [above] {ex-equivalence via $(Q,P)$} (m-1-3)
edge  node [right] {$(Q,P)$-expl} (m-2-2)
(m-3-1.east|-m-3-3) edge [<->,>=stealth]
node [above] {EM-equivalence} (m-3-3)
(m-1-3) edge   node [right] {$(I_l,v)$-expl}(m-3-3)
(m-1-3) edge  node [left] {$(I_l,I_n)$-expl}(m-2-2)
(m-2-2) edge  node [right] {prolongation}(m-3-3);
\end{tikzpicture}
 \end{center}
 The systems $\Delta^u$ and $\Delta^u_{PSE}$ above are DACSs and their ex-equivalence is $(Q,P)$-equivalence of DACSs. The system $\Lambda^{u\tilde v}$ and $\Lambda^{uv}$  \red{at the bottom} are control  systems and their EM-equivalence is the extended Morse equivalence given in Definition \ref{Def:EM-equi}.  Note that the implication that the $(Q,\tilde v)$-explicitation $\Lambda^{u\tilde v}$ of $\Delta^u$ is EM-equivalent to the prolongation system $\mathbf{\Lambda}^{uv}$ is a corollary of Theorem~\ref{Thm:equivalence} below since $\mathbf{\Lambda}^{uv}\in\mathbf{Expl}(\Delta^u_{PSE})$, $\Lambda^{u\tilde v}\in \mathbf{Expl}(\Delta^u)$, and $\Delta^u_{PSE}\mathop\sim\limits^{ex} \Delta^u$. 
\end{rem}
\begin{rem}
 The above explicitation (via driving variables) procedure can also be applied to   more  \blue{general} DAE systems such as DACSs \blue{with} time delays (see e.g., \cite{ascher1995numerical}) and external disturbances (see e.g., \cite{berger2017disturbance}). For example, take a DACS of the following form 
\begin{align}\label{Eq:DACSdd}
E\dot x(t)=Hx(t)+Lu(t)+Tx(t-\tau)+Sd(t), 
\end{align}
where $\tau$ represents a time delay and $d(t)$ is a vector of external disturbances. It is always possible to find an invertible matrix $Q$ such that $E_1$ of $QE=\left[ \begin{smallmatrix}
E_1\\
0
\end{smallmatrix}\right] $ is of full row rank. Then we denote  
$$
QH=\left[ \begin{smallmatrix}
H_1\\
H_2
\end{smallmatrix}\right], ~~ QL=\left[ \begin{smallmatrix}
L_1\\
L_2
\end{smallmatrix}\right], ~~QT=\left[ \begin{smallmatrix}
T_1\\
T_2
\end{smallmatrix}\right], ~~QS=\left[ \begin{smallmatrix}
S_1\\
S_2
\end{smallmatrix}\right].
$$
Choose $B^v$ such that ${\rm Im} B^v=\ker E_1$ and \red{a right inverse $E^{\dagger}_1$ of $E_1$, and} define 
$$
A:=E^{\dagger}_1H_1,\ \ B^u:=E^{\dagger}_1L_1,\ \ M:=E^{\dagger}_1T_1,\ \ N:=E^{\dagger}_1S_1,\ \ C:=H_2,\ \ D^u=:L_2, \ \ J:=T_2,\ \ K:=S_2.
$$
 With the above defined matrices, we can attach the following ODECS \blue{with} time delays and external disturbance to (\ref{Eq:DACSdd}): 
\begin{align}\label{Eq:ODECS}
\left\{ {\begin{aligned}
	\dot x(t)&=Ax(t)+B^uu(t)+B^vv(t)+Mx(t-\tau)+Nd(t)\\
	y(t) &= Cx(t)+D^uu(t)+Jx(t-\tau)+Kd(t).
	\end{aligned}}\right.	
\end{align}
 It is clear that if   DACS (\ref{Eq:DACSdd}) is \emph{not} time-delayed, i.e. $T=0$ (hence $M=0$) and thus  $x(t-\tau)$ is absent, then the results of Proposition \ref{Pro:solution} still hold for (\ref{Eq:DACSdd}) and (\ref{Eq:ODECS}),  meaning that solutions $(x(\cdot),d(\cdot),u(\cdot))$ of (\ref{Eq:DACSdd}) \red{are in} a one-to-one correspondence with solutions $(x(\cdot),u(\cdot),d(\cdot),v(\cdot))$ of (\ref{Eq:ODECS}) with outputs $y=0$. While if \red{a} delayed term is present, the analysis of solutions is  more complicated because  for delayed DAE systems, the existence of solutions  depends on the initial condition $x(t)=\phi(t)$, for  $t\in [-\tau,0]$   (see some studies on solutions of regular delay   DAEs  in  \cite{campbell1980singular,fridman2002stability}).  A particular case is that if the matrices $E$ and $T$ of (\ref{Eq:DACSdd}) satisfy  $\ker E\subseteq \ker T$, implying that there are no delayed free variables in the generalized state $x$, then it is clear that solutions of (\ref{Eq:DACSdd}) and  \red{those} of (\ref{Eq:ODECS}) still have a one-to-one correspondence.  We will not give further discussions on solutions of delayed DAE/DACSs since the purpose of this paper is to study canonical forms  \red{but} the application of the explicitation method to such systems \red{seem to} be an interesting subject for further research. 
\end{rem}
Since the explicitation  of $\Delta^u$ is a class of ODECSs of the form (\ref{Eq:ODEcontrol}), we give the following definition of equivalence for ODECSs of the form (\ref{Eq:ODEcontrol}). This definition is a natural extension of the Morse equivalence (\cite{morse1973structural},  {extended by Molinari \cite{molinari1978structural}, see also \cite{chen2021geometric}}) of classical ODECSs of the form (\ref{Eq:ODEcontrol2}). 
\begin{defn}[extended Morse equivalence and extended Morse transformations]\label{Def:EM-equi}
	Two ODECSs
	$$\Lambda^{uv}_{n,m,s,p}=(A,B^u,B^v,C,D^u), \ \ \ \tilde \Lambda^{\tilde u\tilde v}_{n,m,s,p}=(\tilde A,\tilde B^{\tilde u},\tilde B^{\tilde v},\tilde C,\tilde D^{\tilde u})$$ 
	are called extended Morse equivalent, shortly EM-equivalent, denoted by $\Lambda^{uv} \mathop  \sim \limits^{EM} \tilde \Lambda^{\tilde u\tilde v} $, if there exist matrices  $T_x\in Gl(n,\mathbb R)$, $T_u\in Gl(m,\mathbb R)$, $T_v\in Gl(s,\mathbb R)$, $T_y\in Gl(p,\mathbb R)$, $F_u\in \mathbb{R}^{m\times n}$, $F_v\in \mathbb{R}^{s\times n}$, $R\in \mathbb{R}^{s\times m}$, $K\in \mathbb R^{n\times p}$  such that the system matrices of $\Lambda^{uv}$ and $\tilde \Lambda^{\tilde u\tilde v}$ satisfy:
	\begin{align}\label{Eq:EME}
	\left[ {\begin{smallmatrix}
		{\tilde A}&{\tilde B^{\tilde u}}&{\tilde B^{\tilde v}}\\
		{\tilde C}&{\tilde D^{\tilde u}}&0
		\end{smallmatrix}} \right] = \left[ {\begin{smallmatrix}
		T_x&{T_xK}\\
		0&{T_y}
		\end{smallmatrix}} \right]\left[ {\begin{smallmatrix}
		{A}&{B^u}&{B^v}\\
		{C}&{D^u}&0
		\end{smallmatrix}} \right]\left[ {\begin{smallmatrix}
		{T_x^{-1}}&0&0\\
		{F_uT_x^{-1}}&{T^{-1}_u}&0\\
		(F_v+RF_u)T_x^{-1}&RT^{-1}_u&T^{-1}_v
		\end{smallmatrix}} \right].
	\end{align}
	{An 8-tuple $(T_x,T_u,T_v,T_y,F_u,F_v,R,K)$, acting on the system according to  (\ref{Eq:EME}), will be called an extended Morse transformation and denoted by $EM_{tran}$}.
\end{defn} 
The matrices $T_x$, $T_u$, $T_v$ and $T_y$ are coordinates transformations in the, respectively, state space $\mathscr{X}=\mathbb R^n$, input subspace $\mathscr{U}_u=\mathbb R^m$, input subspace $\mathscr{U}_v=\mathbb R^s$ and,  output space $\mathscr{Y}=\mathbb R^p$,  where $F_u$ defines a state feedback of $u$, $F_v$ and $R$ define a feedback of $v$,  $K$ defines an output injection. 
\begin{rem}\label{rem:EM-equi}
	(i) {An} extended Morse transformation, {whose action is} given by (\ref{Eq:EME}), includes two kinds of feedback transformations:
	\begin{align}\label{Eq:twofb}
	v=F_vx+Ru+T_v^{-1}\tilde v  \ \ \ {\rm and}\ \ \ u=F_ux+T^{-1}_u\tilde u.
	\end{align}
	The vector of driving variables $v$ is ``stronger'' than the original control vector $u$ since {when transforming} $v$ we can use both $u$ and $x$ as feedback, but {when transforming $u$ we are} not allowed to use $v$. {This is expressed by the triangular form of the matrix multiplying on the right  {in} (\ref{Eq:EME}).}	
	
	(ii) Recall the definition of the Morse equivalence and the Morse transformation \cite{morse1973structural} (and their generalization by Molinari \cite{molinari1978structural} for $D^u\ne0$, see also \cite{chen2021geometric}): for two ODECSs $\Lambda^u=(A,B^u,C,D^u)$ and $\tilde \Lambda^{\tilde u}=(\tilde A,\tilde B^{\tilde u},\tilde C,\tilde D^{\tilde u})$ of the form (\ref{Eq:ODEcontrol2}), if 
	\begin{align*}
	\left[ {\begin{smallmatrix}
		{\tilde A}&\tilde B^{\tilde u}\\
		{\tilde C}&\tilde D^{\tilde u}
		\end{smallmatrix}} \right] = \left[ {\begin{smallmatrix}
		T_x&{T_xK}\\
		0&{T_y}
		\end{smallmatrix}} \right]\left[ {\begin{smallmatrix}
		{A}&B^u\\
		{C}&D^u
		\end{smallmatrix}} \right]\left[ {\begin{smallmatrix}
		{T_x^{-1}}&0\\
		{F_uT_x^{-1}}&{T^{-1}_u}\\
		\end{smallmatrix}} \right],
	\end{align*}
	then $\Lambda^u$  and  $\tilde \Lambda^{\tilde u}$ are called Morse equivalent (shortly M-equivalent) and the  Morse transformation $(T_x,T_u,T_y,F_u,K)$ is {denoted} by $M_{tran}$. {Clearly, M-equivalence is an equivalence relation for ODECSs of the form (\ref{Eq:ODEcontrol2}), defined by a 4-tuples $(A,B^u,C,D^u)$ and EM-equivalence is for ODECSs of the form (\ref{Eq:ODEcontrol}), defined by a 5-tuples $(A,B^u,B^v,C,D^u)$.} Observe that if the vector of driving variables $v$ is {of} dimension zero ($B^v$ {is absent}), then the EM-equivalence reduces to the M-equivalence.
	
	(iii) Recall that we can express an ODECS of {the} form $\Lambda^{uv}=(A,B^u,B^v,C,D^u)$ as a {standard} ODECS $\Lambda^w=(A,B^w,C,D^w)$ of the form (\ref{Eq:ODEcontrol2}) {with one type of controls $w$, where $w=[u^T,v^T]^T$}. Now let 
	\begin{align*}
	F_w=\left[ {\begin{smallmatrix}
		F_u\\
		F_v+RF_u
		\end{smallmatrix}} \right], \ \ \ T_w^{-1}=\left[ {\begin{smallmatrix}
		T_u^{-1}&0\\
		RT_u^{-1}&T_v^{-1}
		\end{smallmatrix}} \right],	
	\end{align*}    
	{then} we {conclude} the following equation from (\ref{Eq:EME}) (notice that $T_w$ has a block-triangular structure):
	\begin{align}\label{Eq:l-f.bsys-equi1}
	\left[ {\begin{smallmatrix}
		{\tilde A}&\tilde B^w\\
		{\tilde C}&\tilde D^w
		\end{smallmatrix}} \right] = \left[ {\begin{smallmatrix}
		T_x&{T_xK}\\
		0&{T_y}
		\end{smallmatrix}} \right]\left[ {\begin{smallmatrix}
		{A}&B^w\\
		{C}&D^w
		\end{smallmatrix}} \right]\left[ {\begin{smallmatrix}
		{T_x^{-1}}&0\\
		{F_wT_x^{-1}}&{T^{-1}_w}\\
		\end{smallmatrix}} \right],
	\end{align}
	which is exactly the expression of the M-equivalence {for systems $\Lambda^w$} ({compare Remark \ref{rem:EM-equi}(ii) above}). It implies that the EM-equivalence can be expressed as {a} form of the M-equivalence with a triangular {matrix} $T_w$ (input coordinates transformation matrix). This triangular form {is a consequence of} two kinds of feedback transformation  shown in equation (\ref{Eq:twofb}).
\end{rem}
Now we give the main result of this subsection:
\begin{thm}\label{Thm:equivalence}
	Consider two DACSs  $ \Delta^u_{l,n,m}=(E,H,L)$ and  $\tilde \Delta^{\tilde u}_{l,n,m}=(\tilde E,\tilde H,\tilde L) $ as well as two ODECSs $\Lambda^{uv}_{n,m,s,p}=(A,B^u,B^v,C,D^u)$ and $\tilde \Lambda^{\tilde u \tilde v}_{n,m,s,p}=(\tilde A,\tilde B^{\tilde u},\tilde B^{\tilde v},\tilde C,\tilde D^{\tilde u})$ {satisfying}  $\Lambda^{uv}\in \mathbf{Expl}(\Delta^u) $ and $\tilde \Lambda^{\tilde u \tilde v}\in \mathbf{Expl}(\tilde \Delta^{\tilde u})$. Then, $ \Delta^u\mathop  \sim \limits^{ex-fb} \tilde \Delta^{\tilde u}$ if and only if  $\Lambda^{uv} \mathop  \sim \limits^{EM} \tilde \Lambda^{\tilde u \tilde v}$.
\end{thm}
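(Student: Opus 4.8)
The plan is to prove the equivalence by relating the two ex-fb-transformations on the DACS side to the extended-Morse transformations on the explicitation side, using the explicit formulas already assembled in the excerpt. The key observation is that by Proposition~\ref{Pro:LinDAEexpl}, for a \emph{fixed} DACS $\Delta^u$, \emph{any} two members of $\mathbf{Expl}(\Delta^u)$ are already EM-equivalent (via the output-injection, $v$-feedback, and output-multiplication maps of~(\ref{Eq:mapDACS}), which are exactly the $EM_{tran}$ with $F_u=0$ and $T_u=I_m$). Since EM-equivalence is an equivalence relation, this lets me fix one convenient representative in each explicitation class and reduce the whole statement to comparing those representatives. So first I would record that it suffices to prove the two implications for \emph{some} choice of $(Q,v)$- and $(\tilde Q,\tilde v)$-explicitations, because any other choice differs by an $EM_{tran}$ and composing with it preserves EM-equivalence.

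For the forward direction, assume $\Delta^u\mathop\sim\limits^{ex\text{-}fb}\tilde\Delta^{\tilde u}$ via $(Q_0,P,F,G)$ as in~(\ref{Eq:ex-fb-eq}), so $\tilde E=Q_0EP^{-1}$, $\tilde H=Q_0(H+LF)P^{-1}$, $\tilde L=Q_0LG$. The idea is to build an explicitation of $\tilde\Delta^{\tilde u}$ directly from the data of a chosen explicitation of $\Delta^u$, and then read off the $EM_{tran}$. Concretely, if $Q$ reduces $E$ with block $E_1$, then $\tilde Q:=Q Q_0^{-1}$ reduces $\tilde E$ into blocks $(E_1P^{-1},0)$, so $\tilde E_1=E_1P^{-1}$; I would take $\tilde E_1^{\dagger}=PE_1^{\dagger}$ as right inverse, and $\tilde B^{\tilde v}=PB^v$ (note $\ker\tilde E=P\ker E$, so this is a legitimate choice). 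Substituting the ex-fb formulas into $\tilde A=\tilde E_1^{\dagger}\tilde H_1$, $\tilde B^{\tilde u}=\tilde E_1^{\dagger}\tilde L_1$, $\tilde C=\tilde H_2$, $\tilde D^{\tilde u}=\tilde L_2$ and expanding, I expect to match the structure of~(\ref{Eq:EME}) with the dictionary $T_x=P$ (state coordinates), $T_u=G^{-1}$ (input transformation), $T_y=I_p$ or an appropriate block (output transformation arising from the bottom block of $\tilde Q$), together with $F_u$, $F_v$, $R$, $K$ produced by the $LF$ term and by how the feedback $F$ splits across the $E_1$- and $H_2$-rows. The routine but essential computation is to verify that all entries of the right-multiplying triangular matrix in~(\ref{Eq:EME}) come out correctly.

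For the converse, assume $\Lambda^{uv}\mathop\sim\limits^{EM}\tilde\Lambda^{\tilde u\tilde v}$ and reconstruct an ex-fb-equivalence of the DACSs. Each explicitation corresponds to setting the output $y=0$ in~(\ref{Eq:expl3}), and the $EM_{tran}$ of~(\ref{Eq:EME}) acts on the stacked matrix $\left[\begin{smallmatrix}A&B^u&B^v\\ C&D^u&0\end{smallmatrix}\right]$ precisely by state coordinates $T_x$, input change $T_u$, output transformation $T_y$, feedbacks $F_u,F_v,R$, and output injection $K$; I would translate each of these, via the correspondence of Proposition~\ref{Pro:solution} between solutions with $y=0$ and DACS solutions, into the DACS operations $P=T_x$, $G=T_u^{-1}$, $Q=$ a product of $\tilde Q$, the block containing $T_y$ and $K$, and $F=F_u$ (the $v$-feedback $F_v,R$ and output injection $K$ being exactly the internal non-uniqueness of the explicitation class, hence absorbed by Proposition~\ref{Pro:LinDAEexpl}). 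Assembling these gives an ex-fb-equivalence between the representatives, hence between $\Delta^u$ and $\tilde\Delta^{\tilde u}$.

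\textbf{The main obstacle} I anticipate is bookkeeping the output-injection $K$ and the $v$-feedback pair $(F_v,R)$: these appear genuinely in the $EM_{tran}$ but have \emph{no} counterpart among the DACS transformations $(Q,P,F,G)$, since $\ker E$ and the row-redundancy of $E$ are intrinsic to $\Delta^u$. The clean way to handle this is to invoke Proposition~\ref{Pro:LinDAEexpl}: those three operations precisely span the ambiguity within a single explicitation class, so they can always be normalized away before comparing the two DACSs. Making this normalization precise---showing that the $F_v,R,K$ part of any $EM_{tran}$ factors through the freedom in choosing $Q$, $E_1^{\dagger}$, and $B^v$, leaving only $(T_x,T_u,T_y)$ and $F_u$ to be matched with $(P,G^{-1},Q\text{-block},F)$---is the crux of the argument.
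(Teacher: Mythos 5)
Your proposal is correct and follows essentially the same route as the paper's proof: both reduce to a convenient representative of each explicitation class via Proposition~\ref{Pro:LinDAEexpl} (since the $v$-feedback, output injection and output multiplication are exactly the $EM_{tran}$'s internal to a class), and then establish the dictionary $T_x\leftrightarrow P$, $T_u\leftrightarrow G^{-1}$, $F_u\leftrightarrow F$, with $(T_y,K)$ coming from the blocks of $Q$ and $(T_v,F_v,R)$ from those of $P$. The paper organizes the computation by first normalizing $E$ and $\tilde E$ to $\left[\begin{smallmatrix}I_q&0\\0&0\end{smallmatrix}\right]$ and working blockwise, whereas you transport the explicitation data along the transformations and factor the $EM_{tran}$ into a stabilizer part and a pure $(T_x,T_u,F_u)$ part; this is only a difference of bookkeeping, and the factorization you flag as the crux does go through because of the triangular structure of~(\ref{Eq:EME}).
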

The proof will be given in Section \ref{ProofThm:equivalence}. In {the} Appendix, we recall the definitions of  geometric subspaces for DACSs and ODECSs. More specifically, for a DACS $\Delta^u$, we recall the augmented Wong sequences $\mathscr V_i$ and $\mathscr W_i$, together with {$\hat{\mathscr W}_i$}  (see \cite{berger2013controllability},\cite{lewis1992tutorial});  for an ODECS $\Lambda^w$, we recall the subspaces sequences $\mathcal V_i$ and $\mathcal W_i$ (see \cite{wonham1970decoupling},\cite{wonham1974linear},\cite{basile1992controlled}), {whose limits are controlled and conditioned invariant subspaces, respectively,} and we introduce  {a} subspaces sequence {$\hat{\mathcal W}_i$}. 
\begin{pro}\label{Pro:subspacesrelation}
	Given $\Delta^u_{l,n,m}=(E,H,L)$ and $\Lambda^{uv}_{n,m,s,p}=(A,B^u,B^v,C,D^u)$  (or equivalently, $\Lambda^{w}_{n,m+s,p}=(A,B^w,C,D^w)$), consider the subspaces  $\mathscr V_i$, $\mathscr W_i$, $\hat{\mathscr W}_i$ of  $\Delta^u$, given by Definition \ref{Def:augWong} {and} the subspaces $\mathcal V_i$, $\mathcal W_i$, $\hat{\mathcal W}_i$ of $\Lambda^{w}$, given by Lemma \ref{Lem:invarsubspaceseq} in the Appendix. Assume that $\Lambda^{uv}\in \mathbf{Expl}(\Delta^u)$. {Then we have  for $i\in \mathbb N$, $$\mathscr V_i(\Delta^u)=\mathcal V_i(\Lambda^{w}), \ \ \ \  \mathscr W_i(\Delta^u)={\mathcal W_i}(\Lambda^{w}),$$ and for $i\in \mathbb N^+$,}   $$\hat{\mathscr W}_i(\Delta^u)=\hat{\mathcal W}_i(\Lambda^{w}) .$$
\end{pro}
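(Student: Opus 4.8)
The plan is to prove all three families of equalities by induction on $i$, reducing each inductive step to the algebraic identities that define the explicitation. From Definition \ref{Def:Qvexpl_lin}, writing $QE=\left[{\begin{smallmatrix}E_1\\0\end{smallmatrix}}\right]$, $QH=\left[{\begin{smallmatrix}H_1\\H_2\end{smallmatrix}}\right]$, $QL=\left[{\begin{smallmatrix}L_1\\L_2\end{smallmatrix}}\right]$ and left-multiplying $A=E_1^{\dagger}H_1$ and $B^u=E_1^{\dagger}L_1$ by $E_1$ (using $E_1E_1^{\dagger}=I_q$), I obtain the key relations
\begin{align*}
H_1=E_1A,\quad L_1=E_1B^u,\quad E_1B^v=0,\quad \ker E_1={\rm Im\,}B^v,\quad H_2=C,\quad L_2=D^u,
\end{align*}
together with $B^w=[\,B^u\ B^v\,]$ and $D^w=[\,D^u\ 0\,]$. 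These hold for \emph{any} admissible choice of $Q$, $E_1^{\dagger}$ and $B^v$, so the computation below is uniform over $\mathbf{Expl}(\Delta^u)$ and no separate appeal to Proposition \ref{Pro:LinDAEexpl} is needed; the resulting choice-independence of the right-hand sides is then a by-product.

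For the controlled sequence I would start from the recursion of Definition \ref{Def:augWong}, namely $\mathscr V_{i+1}=H^{-1}(E\mathscr V_i+{\rm Im\,}L)$. Since $Q$ is invertible, $x\in\mathscr V_{i+1}$ iff $QHx\in QE\mathscr V_i+{\rm Im\,}(QL)$, i.e. iff there exist $\xi\in\mathscr V_i$ and $u$ with
\begin{align*}
E_1Ax=E_1(\xi+B^uu),\qquad Cx=D^uu.
\end{align*}
The first equation gives $Ax-\xi-B^uu\in\ker E_1={\rm Im\,}B^v$, so $Ax+B^ww\in\mathscr V_i$ for a suitable $w$; the second, together with $D^ww=D^uu$, is exactly $Cx+D^ww=0$. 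Hence $x\in\mathscr V_{i+1}$ iff there is $w$ with $Ax+B^ww\in\mathscr V_i$ and $Cx+D^ww=0$, which is precisely the recursion defining $\mathcal V_{i+1}(\Lambda^w)$ in Lemma \ref{Lem:invarsubspaceseq}. With $\mathscr V_0=\mathcal V_0=\mathbb R^n$, induction gives $\mathscr V_i=\mathcal V_i$.

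The conditioned sequences are treated the same way, with the preimage $H^{-1}(\cdot)$ replaced by the image and with a vector $\zeta\in\mathscr W_i$ in place of $x$. Starting from $\mathscr W_{i+1}=E^{-1}(H\mathscr W_i+{\rm Im\,}L)$, the block-row splitting and $\ker E_1={\rm Im\,}B^v$ turn $Ex\in H\mathscr W_i+{\rm Im\,}L$ into ``$x=A\zeta+B^ww$ with $\zeta\in\mathscr W_i$ and $C\zeta+D^ww=0$'', which is the recursion for $\mathcal W_{i+1}(\Lambda^w)$; with $\mathscr W_0=\mathcal W_0=\{0\}$ the induction closes. For $\hat{\mathscr W}_i$ the term ${\rm Im\,}L$ is dropped, so the top block contributes only the driving-variable image ${\rm Im\,}B^v$ and the bottom block contributes the constraint $\zeta\in\ker C$; the base case reduces to $\hat{\mathscr W}_1=\ker E={\rm Im\,}B^v=\hat{\mathcal W}_1$, which explains the index range $i\in\mathbb N^+$, and the same induction yields $\hat{\mathscr W}_i=\hat{\mathcal W}_i$.

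The main obstacle is the careful bookkeeping between the DACS control $u\in\mathbb R^m$ and the augmented ODECS control $w=(u,v)\in\mathbb R^{m+s}$: the driving variables $v$ parameterize exactly $\ker E_1$, and it is this identification ${\rm Im\,}B^v=\ker E_1$ that converts the implicit operations ``preimage/image under $E$'' on the DACS side into the explicit controlled/conditioned-invariant operations on the ODECS side. Two points require attention: first, that the feedthrough rows are matched correctly, using $D^w=[\,D^u\ 0\,]$ so that $v$ never enters the output equation; and second, that the invertible $Q$ can be cancelled throughout, which is legitimate because $Q$ is a bijection and therefore commutes with the sums and preimages appearing in the recursions. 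Once these are in place, all three inductions are routine.
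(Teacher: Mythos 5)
Your proof is correct and follows essentially the same route as the paper's: in both cases the point is that, after splitting by the invertible $Q$ and using $H_1=E_1A$, $L_1=E_1B^u$, $\ker E_1={\rm Im\,}B^v$, $C=H_2$, $D^u=L_2$, the recursions defining $\mathscr V_i,\mathscr W_i,\hat{\mathscr W}_i$ coincide step by step with those defining $\mathcal V_i,\mathcal W_i,\hat{\mathcal W}_i$, and the initial conditions match, so induction closes. The only presentational difference is that the paper first normalizes $(E,H,L)$ so that $E=\left[\begin{smallmatrix}I_q&0\\0&0\end{smallmatrix}\right]$ and verifies that all six families of subspaces transform by $P$ under that normalization, whereas you work directly with an arbitrary $(Q,v)$-explicitation via the right-inverse identities; your version avoids that bookkeeping and makes the uniformity over $\mathbf{Expl}(\Delta^u)$ immediate. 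One sentence should be corrected: for $\hat{\mathscr W}_i$ the term ${\rm Im\,}L$ is \emph{not} dropped -- the recursion $\hat{\mathscr W}_{i+1}=E^{-1}(H\hat{\mathscr W}_i+{\rm Im\,}L)$ is identical to that of $\mathscr W_{i+1}$ and only the initialization changes -- but since you then invoke ``the same induction'' together with the correct base case $\hat{\mathscr W}_1=\ker E={\rm Im\,}B^v=\hat{\mathcal W}_1$, this is a slip of wording rather than a gap.
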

The proof will be given in Section \ref{ProofPro:subspacesrelation}. Note that Theorem \ref{Thm:equivalence} and Proposition \ref{Pro:subspacesrelation} are  fundamental results for the {remaining part of the paper}. The above proposition shows \red{the} importance of the notion of $(Q,v)$-explicitation. Namely, the augmented Wong sequences of any DACS $\Delta^u$ and the invariant subspaces of its $(Q,v)$-explicitation $\Lambda^{w}$ coincide (in particular, they are subspaces of the same generalized state-space $\mathscr X$). If we use the $(Q,P)$-explicitation, we need to establish relations between subspaces of different spaces $\mathscr X$ and $\mathscr Z_1$ (see Remark \ref{Rem:dr}).  {Our purpose} is to find the \textbf{FBCF} of {DACSs} via explicitation.  {We have proven in Theorem \ref{Thm:equivalence} that the ex-fb-equivalence for  DACSs  corresponds to the EM-equivalence for their explicitations.}  Thus rather than transforming a DACS $\Delta^u$ directly into its \textbf{FBCF} under ex-fb-equivalence,  we will look for the canonical form {for} $\Lambda^{uv}\in \mathbf{Expl}(\Delta^u)$ under EM-equivalence. 
\section{The Morse triangular form and its extension}\label{sec:3}
In the beginning of this section, we show that the normal form {given} in \cite{molinari1978structural} (called Morse normal form \textbf{MNF} in the present paper) for  {the} 4-tuple ODECS $ \Lambda^{u}$, given by equation (\ref{Eq:ODEcontrol2}), can be constructed  through a Morse triangular form \textbf{MTF} {that we propose}. Although the constructed normal form is the same as the one in \cite{molinari1978structural},  we will  {provide} explicit transformations with the help of the invariant subspaces given in Lemma \ref{Lem:invarsubspaceseq} of the Appendix, which makes the normalizing procedure {simple} and transparent.
\begin{pro}[Morse triangular form \textbf{MTF}]\label{Pro:MTF}
	For an ODECS $\Lambda^u_{n,m,p}=(A,B^u,C,D^u)$, consider the subspaces $\mathcal V^*$, $\mathcal U_u^*$, $\mathcal W^*$, $\mathcal Y^*$ given by Definition \ref{Def:invariantsub} of {the} Appendix. Choose full rank matrices $T_s^1\in\mathbb{R}^{n\times n_1}$, $T_s^2\in\mathbb{R}^{n\times n_2}$, $T_s^3\in\mathbb{R}^{n\times n_3}$, $T_s^4\in\mathbb{R}^{n\times n_4}$, $T_i^1\in\mathbb{R}^{m\times m_1}$, $T_i^3\in\mathbb{R}^{m\times m_3}$,
	$T_o^3\in\mathbb{R}^{p\times p_3}$, $T_o^4\in\mathbb{R}^{p\times p_4}$ such that 
	\begin{align*}
	\begin{array}{ll}
	{{\mathop{\rm Im\,}\nolimits} {T_s^1} = {\mathcal V^*} \cap {\mathcal W^*},}&{{\mathcal V^*} \cap {\mathcal W^*} \oplus {\mathop{\rm Im\,}\nolimits} {T_s^2} = {\mathcal V^*},}\\
	{{\mathcal V^*} \cap {\mathcal W^*} \oplus {\mathop{\rm Im\,}\nolimits} {T_s^3} = {\mathcal W^*},}&{\left( {{\mathcal V^*} + {\mathcal W^*}} \right) \oplus {\mathop{\rm Im\,}\nolimits} {T_s^4} =\mathscr X= {\mathbb R^n},}\\
	{{\mathop{\rm Im\,}\nolimits} {T_i^1} = {\mathcal U_u^*},}&{\mathop{\rm Im\,}\nolimits} {T_i^3}  \oplus {\rm Im\,}  {T_i^1}={ \mathscr U_u}={\mathbb R^m},\\
	{{\mathop{\rm Im\,}\nolimits} {T_o^3} = {\mathcal Y^*},}&{\mathop{\rm Im\,}\nolimits} {T_o^4}\oplus {\rm Im\,}  {T_o^3}={{ \mathscr Y}}={\mathbb R^p},
	\end{array}
	\end{align*}
	where $n=n_1+n_2+n_3+n_4$, $m=m_1+m_3$, $p=p_3+p_4$. Then 
\begin{align}\label{Eq:mtftm}
	\begin{array}{c}
	T_s = [ {\begin{smallmatrix}
			{{T_s^1}}&{{T_s^2}}&{{T_s^3}}&{{T_s^4}}
	\end{smallmatrix}} ]^{-1}\in Gl(n,\mathbb{R}), ~~ T_i = [ {\begin{smallmatrix}
			{{T_i^1}}&{{T_i^3}}
	\end{smallmatrix}} ]^{-1}\in Gl(m,\mathbb{R}), ~~ T_o= [ {\begin{smallmatrix}
			{{T_o^3}}&{{T_o^4}}
	\end{smallmatrix}} ]^{-1}\in Gl(p,\mathbb{R}), 
\end{array}
\end{align}
{and} there exist matrices $F_{MT}\in \mathbb{R}^{m\times n }$ and $K_{MT}\in \mathbb{R}^{n\times p }$ such that the Morse transformation $M_{tran}=(T_s,T_i,T_o,F_{MT},K_{MT})$ brings $\Lambda^u$ {into $\tilde \Lambda^{\tilde u}=M_{tran}(\Lambda^u)$, represented in the Morse triangular form \textbf{MTF}, that is given by $\tilde \Lambda^{\tilde u}=(\tilde A,\tilde B^{\tilde u},\tilde C,\tilde D^{\tilde u})$, where} 
		\begin{align}\label{Eq:MTF}
	\left[ \begin{smallmatrix}
	{\tilde{A}}&\tilde B^{\tilde u}\\
	{\tilde{C}}&{\tilde D^{\tilde u}}
	\end{smallmatrix} \right] = \left[ {\begin{smallmatrix}
		{\tilde A_1}&{\tilde A_1^2}&{\tilde A_1^3}&{\tilde A_1^4}&\vline& {\tilde B_1}&\tilde B_1^2\\
		0&{\tilde A_2}&0&{\tilde A_2^4}&\vline& 0&0\\
		0&0&{\tilde A_3}&{\tilde A_3^4}&\vline& 0&{\tilde B_3}\\
		0&0&0&{\tilde A_4}&\vline& 0&0\\
		\hline  
		0&0&{\tilde C_3}&\tilde C_3^4&\vline& 0&{\tilde D_3} \Tstrut\\
		0&0&0&{\tilde C_4}&\vline& 0&0
		\end{smallmatrix}} \right].
	\end{align}
In the above \textbf{MTF}, the pair $(\tilde A_1, \tilde B_1)$ is controllable,
	the pair $(\tilde C_4,\tilde A_4)$ is observable and the 4-tuple $(\tilde A_3,\tilde B_3,\tilde C_3,\tilde D_3)$ is prime \footnote{ {A control system is called prime if it is M-equivalent to $m_3$ independent chains of integrators, see \cite{morse1973structural} and \cite{molinari1978structural}.}}. 
\end{pro}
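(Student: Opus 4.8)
The plan is to realize the required Morse transformation explicitly as a pair $(F_{MT},K_{MT})$ read off from the geometry of the invariant subspaces, followed by the coordinate changes $T_s,T_i,T_o$, and then to deduce the block pattern (\ref{Eq:MTF}) from invariance and containment relations. Recall from Remark~\ref{rem:EM-equi}(ii) that a Morse transformation $(T_s,T_i,T_o,F_{MT},K_{MT})$ sends $(A,B^u,C,D^u)$ to
\begin{align*}
\tilde A=T_s\hat A T_s^{-1},\quad \tilde B^{\tilde u}=T_s\hat B T_i^{-1},\quad \tilde C=T_o\hat C T_s^{-1},\quad \tilde D^{\tilde u}=T_o\hat D T_i^{-1},
\end{align*}
where
\begin{align*}
\hat A=(A+B^uF_{MT})+K_{MT}(C+D^uF_{MT}),\quad \hat B=B^u+K_{MT}D^u,\quad \hat C=C+D^uF_{MT},\quad \hat D=D^u.
\end{align*}
Thus the target block structure is equivalent to invariance/containment statements for the closed-loop data $(\hat A,\hat B,\hat C,\hat D)$ relative to the flag $\mathcal V^*\cap\mathcal W^*\subseteq\mathcal V^*,\mathcal W^*\subseteq\mathcal V^*+\mathcal W^*\subseteq\mathbb R^n$ and the input/output splittings ${\rm Im}\,T_i^1\oplus{\rm Im}\,T_i^3$ and ${\rm Im}\,T_o^3\oplus{\rm Im}\,T_o^4$.

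First I would fix $F_{MT}$ to be a friend of $\mathcal V^*$ and $K_{MT}$ a friend of $\mathcal W^*$, in the sense of the defining properties recalled in Definition~\ref{Def:invariantsub}: $(A+B^uF_{MT})\mathcal V^*\subseteq\mathcal V^*$ together with $(C+D^uF_{MT})\mathcal V^*=0$ (output-nulling controlled invariance), and $(A+K_{MT}C)\mathcal W^*\subseteq\mathcal W^*$ together with ${\rm Im}(B^u+K_{MT}D^u)\subseteq\mathcal W^*$ (input-containing conditioned invariance). The crucial observation, which decouples the two choices, is that on $\mathcal V^*$ we have $\hat A v=(A+B^uF_{MT})v+K_{MT}(C+D^uF_{MT})v=(A+B^uF_{MT})v\in\mathcal V^*$, since the $K_{MT}$-term is killed by output-nulling; dually, on $\mathcal W^*$ we have $\hat A w=(A+K_{MT}C)w+\hat B F_{MT}w\in\mathcal W^*$, since the $F_{MT}$-term lands in ${\rm Im}\,\hat B\subseteq\mathcal W^*$. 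Hence $\hat A$ leaves both $\mathcal V^*$ and $\mathcal W^*$ invariant, and therefore also $\mathcal V^*\cap\mathcal W^*$ and $\mathcal V^*+\mathcal W^*$. In the basis spanning $\mathcal V^*\cap\mathcal W^*,{\rm Im}\,T_s^2,{\rm Im}\,T_s^3,{\rm Im}\,T_s^4$, this invariance is exactly the upper-triangular zero pattern of $\tilde A$.

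The zeros in $\tilde B,\tilde C,\tilde D$ then follow from the companion relations: $\hat C\mathcal V^*=0$ forces the state-block-$1,2$ columns of $\tilde C$ to vanish in both output blocks; ${\rm Im}\,\hat B\subseteq\mathcal W^*$ forces the state-block-$2,4$ rows of $\tilde B$ to vanish for every input; $\hat C(\mathcal V^*+\mathcal W^*)=\hat C\mathcal W^*\subseteq\mathcal Y^*$ and ${\rm Im}\,\hat D\subseteq\mathcal Y^*$ kill output block $4$; and $\hat B\,({\rm Im}\,T_i^1)\subseteq\mathcal V^*\cap\mathcal W^*$ together with $\hat D\,({\rm Im}\,T_i^1)=0$ (the defining properties of $\mathcal U_u^*$) confine input block $1$ to state block $1$ and produce no output. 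Expressing everything in the bases $T_s,T_i,T_o$ then yields (\ref{Eq:MTF}). For the diagonal blocks, I would use the classical identity $\mathcal V^*\cap\mathcal W^*=\mathcal R^*$, the supremal controllability (output-nulling reachability) subspace: since a controllability subspace is by definition the reachable subspace of the restricted closed-loop dynamics through the inputs landing inside it, the restriction $(\tilde A_1,\tilde B_1)$ is controllable. Observability of $(\tilde C_4,\tilde A_4)$ follows by the dual argument, under the transposition $(A,B^u,C,D^u)\mapsto(A^T,C^T,B^T,(D^u)^T)$, which interchanges $\mathcal V^*$ with $\mathcal W^*$ and block $1$ with block $4$. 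Finally, for the block $(\tilde A_3,\tilde B_3,\tilde C_3,\tilde D_3)$ on $E_3\cong\mathcal W^*/(\mathcal V^*\cap\mathcal W^*)$ with inputs ${\rm Im}\,T_i^3$ and outputs $\mathcal Y^*$, I would verify that its induced invariant subspaces degenerate, namely $\mathcal V^*_{(3)}=0$ and $\mathcal W^*_{(3)}=E_3$ (so $\mathcal V^*_{(3)}\cap\mathcal W^*_{(3)}=0$ and $\mathcal V^*_{(3)}+\mathcal W^*_{(3)}=E_3$), which leaves no controllable-unobservable and no autonomous part and forces the block to be M-equivalent to $m_3$ independent chains of integrators, i.e.\ prime.

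The step I expect to be the main obstacle is the primeness of the middle block: establishing that the subspaces induced on $E_3$ degenerate exactly as above requires carefully transporting the extremal characterizations of $\mathcal V^*$ and $\mathcal W^*$ through the quotient $\mathcal W^*/(\mathcal V^*\cap\mathcal W^*)$ and then invoking the structure theorem that a system with trivial $\mathcal V^*$ and full $\mathcal W^*$ is M-equivalent to integrator chains (possibly of length zero, accounting for $\tilde D_3$). By contrast, the verification of the zero pattern and of the controllability/observability of blocks $1$ and $4$ is routine once the two friends $F_{MT},K_{MT}$ are chosen so that the cross terms vanish as above.
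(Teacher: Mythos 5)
Your proposal is correct and follows essentially the same route as the paper's proof: both rest on choosing a feedback that is a friend of $\mathcal V^*$ and an output injection that is a friend of $\mathcal W^*$ (the paper's equations (\ref{Eq:F12}) and (\ref{Eq:K12}) are exactly the blockwise version of your abstract choice), reading off the zero pattern from the resulting invariance of the flag and the containments $\mathrm{Im}\,\hat B\subseteq\mathcal W^*$, $\hat C\mathcal V^*=0$, $\hat B\,\mathcal U_u^*\subseteq\mathcal V^*\cap\mathcal W^*$, $\hat C\mathcal W^*\subseteq\mathcal Y^*$, and then identifying block $1$ with the supremal output-nulling reachability subspace, block $4$ by duality, and block $3$ as prime via Molinari's characterization ($\mathcal V^*=0$, $\mathcal U^*_u=0$, $\mathcal W^*$ and $\mathcal Y^*$ full). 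The only difference is presentational: the paper derives the friends explicitly as solutions of matrix equations (needed later for the algorithm) and applies $F$ before $K$ using feedback-invariance of $\mathcal W^*$, whereas you obtain the same conclusion from the decoupling identity $\hat A=(A{+}B^uF)+K\hat C=(A{+}KC)+\hat BF$.
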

The proof {is}  given in Section \ref{ProofPro:MTF}. In {the} next proposition, we describe a way to transform the above \textbf{MTF} into  {the} Morse normal form \textbf{MNF}, {which is a further simplification of the \textbf{MTF}}. We will use the same notations as {in} Proposition \ref{Pro:MTF}.
\begin{pro}[Morse normal form \textbf{MNF}]\label{Pro:MMNF} 
	There exists a feedback transformation matrix $F_{MN}\in \mathbb{R}^{m\times n }$, an output injection matrix $K_{MN}\in \mathbb{R}^{n\times p}$ and a state space coordinate transformation matrix $T_{MN}\in Gl(n,\mathbb R)$, {which can be chosen by \textbf{MNF} Algorithm \ref{Alg:1} below,} such that the Morse transformation $M_{tran}=(T_{MN},I_u,I_y,F_{MN},K_{MN})$ brings $\tilde \Lambda^{\tilde u}$ of Proposition \ref{Pro:MTF},  {given by (\ref{Eq:MTF})},  into $\bar \Lambda^{\bar u}=M_{tran}(\tilde \Lambda^{\tilde u})$, represented in the Morse normal form \textbf{MNF},  that is given by $\bar \Lambda^{\bar u}=(\bar A,\bar B^{\bar u},\bar C,\bar D^{\bar u})$, where
	\begin{align}\label{Morsenorform}
	\left[ {\begin{smallmatrix}
		{\bar{A}}&{\bar B^{\bar u}}\\
		{\bar{C}}&{\bar D^{\bar u}}
		\end{smallmatrix}} \right] = \left[ {\begin{smallmatrix}
		{\bar A_1}&0&0&0&\vline& {\bar B_1}&0\\
		0&{\bar A_2}&0&0&\vline& 0&0\\
		0&0&{\bar A_3}&0&\vline& 0&{\bar B_3}\\
		0&0&0&{\bar A_4}&\vline& 0&0\\
		\hline
		0&0&{\bar C_3}&0&\vline& 0&{\bar D_3} \Tstrut\\
		0&0&0&{\bar C_4}&\vline& 0&0
		\end{smallmatrix}} \right].
	\end{align}
	In the above \textbf{MNF}, the pair $(\bar A_1, \bar B_1)$ is controllable, the pair $(\bar C_4,\bar A_4)$ is observable, and the 4-tuple $(\bar A_3,\bar B_3,\bar C_3,\bar D_3)$ is prime.  
\end{pro}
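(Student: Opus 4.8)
The plan is to exploit the three structural properties recorded in the \textbf{MTF} of Proposition~\ref{Pro:MTF} — controllability of $(\tilde A_1,\tilde B_1)$, observability of $(\tilde C_4,\tilde A_4)$, and primeness of $(\tilde A_3,\tilde B_3,\tilde C_3,\tilde D_3)$ — to decouple the four diagonal subsystems. Since the prescribed transformation $M_{tran}=(T_{MN},I_u,I_y,F_{MN},K_{MN})$ fixes the input and output coordinates, only three instruments are available: the state change $T_{MN}$ (conjugating $\tilde A$ and acting on columns of $\tilde C$ and rows of $\tilde B^{\tilde u}$), the feedback $F_{MN}$ (adding $\tilde B^{\tilde u}F_{MN}$ to $\tilde A$ and $\tilde D^{\tilde u}F_{MN}$ to $\tilde C$), and the output injection $K_{MN}$ (adding $K_{MN}\tilde C$ to $\tilde A$ and $K_{MN}\tilde D^{\tilde u}$ to $\tilde B^{\tilde u}$). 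The blocks to be annihilated are $\tilde A_1^2,\tilde A_1^3,\tilde A_1^4,\tilde B_1^2$ (everything feeding into block~$1$) and $\tilde A_2^4,\tilde A_3^4,\tilde C_3^4$ (everything emitted by block~$4$). I first record two facts used repeatedly: because $(\tilde A_1,\tilde B_1)$ is controllable, the controlled Sylvester map $(X,Y)\mapsto \tilde A_1 X-X M+\tilde B_1 Y$ is surjective for \emph{every} $M$, \emph{regardless of the spectra} (dually, observability of $(\tilde C_4,\tilde A_4)$ makes the conditioned map $(X,K)\mapsto X\tilde A_4-MX+K\tilde C_4$ surjective); and because the prime $4$-tuple is, up to Morse equivalence, a union of $m_3=p_3$ integrator chains, $\tilde B_3$ has full column rank $m_3$ and $\tilde C_3$ has full row rank $p_3$.

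\textbf{Phase~1 (decoupling the controllable block).} I would redefine $x_1\mapsto x_1-N_2x_2-N_3x_3-N_4x_4$ and add the feedback $u_1\mapsto u_1+G_2x_2+G_3x_3+G_4x_4$. Since column~$1$ of $\tilde A$ and $\tilde C$ vanish below row~$1$ and $u_1$ enters no equation except $\dot x_1$, this alters only the block-$1$ row and leaves all else intact; explicit computation shows the couplings into $\dot x_1$ become $\tilde A_1^j+\tilde A_1N_j-N_j\tilde A_j+\tilde B_1 G_j$, which are driven to zero by the surjectivity fact. The input coupling is treated simultaneously: the mixing of $x_3$ into $x_1$ changes $\tilde B_1^2$ into $\tilde B_1^2-N_3\tilde B_3$, and since $\tilde B_3$ has full column rank its rows span $\mathbb{R}^{m_3}$, so $N_3\tilde B_3=\tilde B_1^2$ is solvable. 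After Phase~1 the first block row reduces to $\dot x_1=\bar A_1x_1+\bar B_1u_1$, with $(\bar A_1,\bar B_1)$ still controllable.

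\textbf{Phase~2 (decoupling the observable block).} This is the transpose of Phase~1. I would redefine $x_2\mapsto x_2+M_2x_4$, $x_3\mapsto x_3+M_3x_4$ and inject the observable output $y_4=\tilde C_4x_4$ into the equations for $x_2,x_3$ through $K_{MN}$. Dually, observability of $(\tilde C_4,\tilde A_4)$ solves the conditioned Sylvester equations killing $\tilde A_2^4,\tilde A_3^4$, while mixing $x_4$ into $x_3$ turns $\tilde C_3^4$ into $\tilde C_3^4+\tilde C_3M_3$, eliminable because $\tilde C_3$ has full row rank, so its columns span $\mathbb{R}^{p_3}$. Crucially, Phase~2 touches only the block-$4$ column and the output-$4$ injection, positions disjoint from those modified in Phase~1; the one shared entry, the $(1,4)$ block, was already set to zero in Phase~1 and is not revisited, since block~$4$ no longer needs decoupling from block~$1$. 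Hence the two phases compose into a single $(T_{MN},F_{MN},K_{MN})$ yielding the block-diagonal \textbf{MNF}, with $(\bar C_4,\bar A_4)$ observable and $(\bar A_3,\bar B_3,\bar C_3,\bar D_3)$ still prime (all three properties being invariant under the transformations used).

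The delicate point, which I expect to require the most care, is that the coordinate block $N_3$ (resp. $M_3$) is \emph{partially pinned} by $N_3\tilde B_3=\tilde B_1^2$ (resp. $\tilde C_3M_3=-\tilde C_3^4$) and must \emph{at the same time} serve the controlled (resp. conditioned) Sylvester equation removing $\tilde A_1^3$ (resp. $\tilde A_3^4$). One must verify that the residual freedom in $N_3$ off $\operatorname{Im}\tilde B_3$, together with the feedback $G_3$, still gives surjectivity — this is where controllability and primeness are used jointly rather than separately — and that no elimination reintroduces a previously cleared block. Fixing an order of eliminations that provably avoids such reintroduction is exactly the content of \textbf{MNF} Algorithm~\ref{Alg:1}, and discharging this constrained-Sylvester solvability is the technical heart of the proof.
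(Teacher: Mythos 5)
Your overall architecture is sound and partly diverges from the paper in an interesting way: for the \emph{unconstrained} couplings ($\tilde A_1^2$, $\tilde A_1^4$, $\tilde A_2^4$) you invoke surjectivity of the controlled map $(X,Y)\mapsto \tilde A_1X-XM+\tilde B_1Y$ (and its conditioned dual), which follows from controllability of $(\tilde A_1,\tilde B_1)$ and observability of $(\tilde C_4,\tilde A_4)$ for \emph{any} $M$, whereas the paper instead first uses $F_{MN},K_{MN}$ to make the spectra of $\bar A_1,\bar A_2,\bar A_3,\bar A_4$ mutually disjoint and then solves ordinary Sylvester equations (\ref{Eq:Slyvester}) with unique solutions. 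Either route works for those blocks.

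However, there is a genuine gap: you explicitly defer the constrained problem --- finding $N_3$ satisfying simultaneously $N_3\tilde B_3=\tilde B_1^2$ and $\tilde A_1N_3-N_3\tilde A_3+\tilde B_1G_3=-\tilde A_1^3$ (and its dual for $M_3$) --- calling it ``the technical heart'' and leaving it to the Algorithm. That is precisely the step the paper's proof actually carries out, and it is not automatic: the paper (i) first normalizes the prime block so that $\tilde D_3=\left[\begin{smallmatrix}0&0\\0&I_\delta\end{smallmatrix}\right]$ and then strips off the static relations so that one may assume $\tilde D_3=0$; (ii) proves solvability of $\tilde B_1^2=-\hat T^2_{MN}\tilde B_3$ from the geometric identity (\ref{Eq:barIbot}); (iii) puts $\tilde B_3=[I_{m_3},0]^T$, $\tilde C_3=[I_{p_3},0]$, observes that the constraint $\bar T^2_{MN}\bar B_3=0$ forces $\bar T^2_{MN}=[0,\ \hat T^2_{MN}]$, and thereby reduces (\ref{Eq:CSlyvester}) to an ordinary Sylvester equation against a sub-block $\bar A_3'$ whose spectrum can be separated. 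Without such an argument your ``residual freedom in $N_3$ off $\operatorname{Im}\tilde B_3$'' claim is unproven. A secondary error feeds into this: in the \textbf{MTF} the prime 4-tuple need not have $\tilde B_3$ of full column rank when $\delta=\operatorname{rank}\tilde D_3>0$ (in the canonical prime form $B^{pu}=[\hat B^{pu},0]$ has $\delta$ zero columns), so even your solvability of $N_3\tilde B_3=\tilde B_1^2$ requires the preliminary reduction to $\tilde D_3=0$ that your proposal omits.
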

The proof of Proposition \ref{Pro:MMNF} will be given in Section \ref{ProofPro:MMNF} and in that proof, we will use the construction of transformation matrices $F_{MN}$, $K_{MN}$ and $T_{MN}$,  which is formulated in the following algorithm.
\begin{MNFalg}\label{Alg:1}
	Step 1: \blue{Given the matrix (\ref{Eq:MTF}),} choose $F_{MN}$ and $K_{MN}$: 
	\begin{align*}
	F_{MN}=\left[ \begin{smallmatrix}
	F_{MN}^1&0&0&0\\
	0&0&{F_{MN}^2}&F_{MN}^3
	\end{smallmatrix} \right],\ \ \ {K_{MN}} = \left[ {\begin{smallmatrix}
		{K_{MN}^1}&0\\
		0&0\\
		{K_{MN}^2}&0\\
		0&{K_{MN}^3}
		\end{smallmatrix}} \right],
	\end{align*}
	such that  {the spectra of $\bar A_1$, $\bar A_2$, $\bar A_3$ and $\bar A_4$  \blue{defined by} the equation below are mutually disjoint}
	(notice that ${F_{MN}}$ and ${K_{MN}}$ preserve the zero blocks of $\tilde \Lambda^{\tilde u}=(\tilde A,\tilde B^{\tilde u},\tilde C,\tilde D^{\tilde u})$): 
	\begin{align*}
	\left[ {\begin{smallmatrix}
		{{I_n}}&{{K_{MN}}}\\
		0&{{I_p}}
		\end{smallmatrix}} \right]\left[ {\begin{smallmatrix}
		{\tilde A}&{\tilde B^{\tilde u}}\\
		{\tilde C}&{\tilde D^{\tilde u}}
		\end{smallmatrix}} \right]\left[ {\begin{smallmatrix}
		{{I_n}}&0\\
		{{F_{MN}}}&{{I_m}}
		\end{smallmatrix}} \right] = \left[ {\begin{smallmatrix}
		{\bar A_1}&{\bar A_1^2}&{\bar A_1^3}&{\bar A_1^4}&\vline& {\bar B_1}&\bar B_1^2\\
		0&{\bar A_2}&0&{\bar A_2^4}&\vline& 0&0\\
		0&0&{\bar A_3}&{\bar A_3^4}&\vline& 0&{\bar B_3}\\
		0&0&0&{\bar A_4}&\vline& 0&0\\
		\hline
		0&0&{\bar C_3}&\bar C_3^4&\vline& 0&{\bar D_3}\Tstrut\\
		0&0&0&{\bar C_4}&\vline& 0&0
		\end{smallmatrix}} \right].
	\end{align*}
	Step 2: {Find} matrices $T^1_{MN}$, $T^2_{MN}$, $T^3_{MN}$, $T^4_{MN}$, $T^5_{MN}$ via the following (constrained) Sylvester equations:
	\begin{align}\label{Eq:Slyvester}
	\begin{array}{c}
	{\bar A_1{T^1_{MN}} - {T^1_{MN}}\bar A_2 =  - \bar A_1^2}, \ \ \ \ \ {\bar A_2{T^4_{MN}} - {T^4_{MN}}\bar A_4 =  - \bar A_2^4},\\{\bar A_1{T^3_{MN}} - {T^3_{MN}}\bar A_4 =  - \bar A_1^4 - \bar A_1^2{T^4_{MN}} - \bar A_1^3{T^5_{MN}}};
	\end{array} 
	\end{align} 
	\begin{align}\label{Eq:CSlyvester}
	\begin{array}{ll}
	\bar A_1{T^2_{MN}} - {T^2_{MN}}\bar A_3 =  - \bar A_1^3,& T^2_{MN}\bar B_3=-\bar B_1^2,\\  	{\bar A_3{T^5_{MN}} - {T^5_{MN}}\bar A_4 =  - \bar A_3^4},&{\bar C_3T^5_{MN}=-\bar C_4}.
	\end{array}
	\end{align}
	Step 3: Set
	\begin{align*}
	{{T_{MN}} = {{\left[ {\begin{smallmatrix}
					I&{{T^1_{MN}}}&{{T^2_{MN}}}&{{T^3_{MN}}}\\
					0&I&0&{{T^4_{MN}}}\\
					0&0&I&{{T^5_{MN}}}\\
					0&0&0&I
					\end{smallmatrix}} \right]}^{ - 1}}}.
	\end{align*}
\end{MNFalg}
\begin{rem}
It is not surprising that Propositions \ref{Pro:MTF} and \ref{Pro:MMNF} describe  results {similar} to those of Theorem 2.3 and Theorem 2.6 of \cite{Berger2012},  as we have shown in \cite{chen2021geometric} that there are direct connections between the geometric subspaces (the Wong sequences) of a DAE  $\Delta:E\dot x=Hx$ and {invariant subspaces of} a control system $\Lambda=(A,B,C,D)\in {\rm Expl}(\Delta)$. There are, however, differences between Propositions \ref{Pro:MTF} and \ref{Pro:MMNF} and {results} of \cite{Berger2012}. In particular, in Theorem 2.6 of \cite{Berger2012}, one has to solve generalized Sylvester equations, while in Propositions \ref{Pro:MMNF} we use (constrained) Sylvester equations. \red{In addition, our transformations  differ from those proposed in the original paper \cite{morse1976system} and \cite{molinari1978structural} for the \textbf{MNF} and seem to be more transparent and explicit.}
\end{rem}
Recall that the explicitation of {a} DACS $\Delta^u$ is a class of ODECSs with two kinds of inputs of the form (\ref{Eq:ODEcontrol}). In the following theorems, we will extend the results {of} Proposition \ref{Pro:MTF} and \ref{Pro:MMNF} to ODECSs with two kinds of inputs. 
\begin{thm}[extended Morse triangular form \textbf{EMTF}]\label{Thm:EMTF} 
	For a DACS $$\Lambda^{uv}_{n,m,s,p}=(A,B^u,B^v,C,D^u),$$ there exists an extended Morse transformation $EM_{tran}$ {bringing $\Lambda^{uv}$ into $EM_{tran}(\Lambda^{uv})=\tilde \Lambda^{\tilde u \tilde v}$ represented in the extended Morse triangular form \textbf{EMTF}, that is given by $\tilde \Lambda^{\tilde u \tilde v}_{n,m,s,p}=(\tilde A,\tilde B^{\tilde u},\tilde B^{\tilde v},\tilde C,\tilde D^{\tilde u})$, where}
	\begin{align}\label{Eq:EMTF}
	\left[ {\begin{smallmatrix}
		{\tilde{A}}&{\tilde B^{\tilde u}}&\tilde B^{\tilde v}\\
		{\tilde{C}}&{\tilde D^{\tilde u}}&0
		\end{smallmatrix}} \right] = \left[ {\begin{smallmatrix}
		{\tilde A_{1}}&{\tilde A_{12}}&{\tilde A_{13}}&{\tilde A_{14}}&\vline& {\tilde B^{\tilde u}_{1}}&{\tilde B^{\tilde u}_{12}}&\vline& {\tilde B^{\tilde v}_1}&{\tilde B^{\tilde v}_{12}}\\
		0&{\tilde A_{2}}&0&{\tilde A_{24}}&\vline& 0&0&\vline& 0&0\\
		0&0&{\tilde A_{3}}&{\tilde A_{34}}&\vline& 0&{\tilde B^{\tilde u}_{3}}&\vline& 0&{\tilde B^{\tilde v}_{3}}\\
		0&0&0&{\tilde A_{4}}&\vline& 0&0&\vline& 0&0\\
		\hline
		0&0&{\tilde C_{3}}&{\tilde C_{34}}&\vline& 0&{\tilde D^{\tilde u}_{3}}&\vline& 0&0 \Tstrut\\
		0&0&0&{\tilde C_{4}}&\vline& 0&0&\vline& 0&0
		\end{smallmatrix}} \right].
	\end{align}
 {In the above \textbf{EMTF}, }the pair $(\tilde A_{1}, \tilde B^{\tilde w}_{1})$ is controllable, where $\tilde B^{\tilde w}_{1}=[\tilde B^{\tilde u}_{1},\tilde  B^{\tilde v}_{1}]$; the pair $(\tilde C_{4},\tilde A_{4})$ is observable ;
	the 4-tuple $(\tilde A_{3},\tilde B^{\tilde w}_{3},\tilde C_{3},\tilde D^{\tilde w}_{3})$ is prime, where $\tilde B^{\tilde w}_{3}=[\tilde B^{\tilde u}_{3},\tilde  B^{\tilde v}_{3}]$, $\tilde D^{\tilde w}_{3}=[\tilde D^{\tilde u}_{3},0]$. 
\end{thm}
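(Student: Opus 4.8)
The plan is to reduce the two-input problem to the single-input Morse triangular form of Proposition~\ref{Pro:MTF}, applied to the combined system $\Lambda^w=(A,B^w,C,D^w)$ with $w=[u^T,v^T]^T$, $B^w=[B^u,B^v]$ and $D^w=[D^u,0]$. The guiding observation is Remark~\ref{rem:EM-equi}(iii): an extended Morse transformation is precisely a Morse transformation of $\Lambda^w$ whose input coordinate change $T_w$ is block lower-triangular, i.e.\ preserves the distinguished driving-variable subspace $\mathscr U_v=\{0\}\times\mathbb R^s\subseteq\mathbb R^{m+s}$, while the state transformation $T_x$, the output transformation $T_y$, the output injection $K$ and the (unconstrained) feedback $F_w$ remain freely available. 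Hence the task is to run the MTF construction while keeping the $u$- and $v$-columns of $B^w$ separate.

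First I would realize the four-block state decomposition (dimensions $n_1,\dots,n_4$ built from $\mathcal V^*$ and $\mathcal W^*$), the output decomposition from $\mathcal Y^*$, and the injection-corrected input containment ${\rm Im\,}(B^w+KD^w)\subseteq\mathcal W^*$, exactly as in the proof of Proposition~\ref{Pro:MTF}, using only $T_x$, $T_y$, $K$ and $F_w$ --- none of which mixes $u$ and $v$. Since the input coordinate change $T_w$ affects only the input columns and not $\tilde A$ or $\tilde C$, this already produces the triangular $A$- and $C$-block pattern of (\ref{Eq:EMTF}); the vanishing of rows $2$ and $4$ in every input column follows from ${\rm Im\,}(B^w+KD^w)\subseteq\mathcal W^*=\mathrm{block}_1\oplus\mathrm{block}_3$, and the input stays in its natural split $[\,\tilde B^{\tilde u}\mid\tilde B^{\tilde v}\,]$.

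The crux is the final input normalization by a block lower-triangular $T_w$ with feedback. A direct computation from (\ref{Eq:EME}) gives $\tilde B^{\tilde v}=T_xB^vT_v^{-1}$ and $\tilde B^{\tilde u}=T_x(B^u+KD^u+B^vR)T_u^{-1}$, together with $\tilde D^{\tilde v}=T_y\cdot 0\cdot T_v^{-1}=0$. Thus the $v$-columns admit only a basis change $T_v$, whereas the $u$-columns may \emph{absorb} $v$-columns through $R$ --- the precise sense in which the driving variable is the stronger input (Remark~\ref{rem:EM-equi}(i)). I would therefore fix the $v$-block first, splitting ${\rm Im\,}B^v$ into its $\mathrm{block}_1$ (controllable) part $\tilde B^{\tilde v}_1$ and its prime part $\tilde B^{\tilde v}_{12},\tilde B^{\tilde v}_3$, and then normalize the $u$-block, using $R$ to cancel the $v$-reachable directions and $T_u,F_u$ to put the remainder into the MTF column pattern $\tilde B^{\tilde u}_1,\tilde B^{\tilde u}_{12},\tilde B^{\tilde u}_3,\tilde D^{\tilde u}_3$; the vanishing $v$-feedthrough then matches (\ref{Eq:EMTF}) automatically.

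Finally, the three structural assertions transfer directly from Proposition~\ref{Pro:MTF}. Observability of $(\tilde C_4,\tilde A_4)$ involves no inputs and is identical to the MTF; controllability of $(\tilde A_1,\tilde B^{\tilde w}_1)$ and primeness of $(\tilde A_3,\tilde B^{\tilde w}_3,\tilde C_3,\tilde D^{\tilde w}_3)$ follow by reading off the combined blocks $\tilde B^{\tilde w}_1=[\tilde B^{\tilde u}_1,\tilde B^{\tilde v}_1]$, $\tilde B^{\tilde w}_3=[\tilde B^{\tilde u}_3,\tilde B^{\tilde v}_3]$ and $\tilde D^{\tilde w}_3=[\tilde D^{\tilde u}_3,0]$, which are exactly the controllable and prime parts of the combined-input MTF. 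The main obstacle I anticipate is the crux step: showing that the free input mixing used by the ordinary MTF can always be replaced by a block lower-triangular $T_w$ at the cost of the finer $u/v$ column partition --- equivalently, that the extra freedom carried by the driving variables (the mixing $R$ from $u$ and the separate change $T_v$) exactly compensates for the now-forbidden $v$-into-$u$ mixing, so that no invariants beyond those of the combined MTF are needed.
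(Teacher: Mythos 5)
Your route is the paper's route: via Remark \ref{rem:EM-equi}(iii), reduce the claim to producing a Morse transformation of $\Lambda^w=(A,B^w,C,D^w)$ into the \textbf{MTF} whose input coordinate change $T_w$ is block-triangular, keep $T_x=T_s$, $T_y=T_o$, $K$ and $F_w$ exactly as in Proposition \ref{Pro:MTF} (none of them mixes $u$ and $v$), and read off the three structural claims from the MTF. Your computation $\tilde B^{\tilde v}=T_xB^vT_v^{-1}$, $\tilde B^{\tilde u}=T_x(B^u+KD^u+B^vR)T_u^{-1}$ is also correct.

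The one point you leave open --- your ``main obstacle'', that the unrestricted input mixing of the \textbf{MTF} can be replaced by a triangular $T_w$ --- is exactly where the paper's proof does its only work, and it closes the gap more simply than your sketch (``use $R$ to cancel the $v$-reachable directions'') suggests. The proof of Proposition \ref{Pro:MTF} uses a single property of the input transformation: the first group of columns of $T_i^{-1}$ must span $\mathcal U_w^*$, and the remaining columns may be \emph{any} complement. Since $\mathcal U_v^*=\mathcal U_w^*\cap\mathscr U_v$ sits inside both $\mathcal U_w^*$ and $\mathscr U_v$, one chooses full-rank $T_u^1,T_u^3,T_v^1,T_v^3$ with ${\rm Im\,}T_v^1=\mathcal U_v^*$, ${\rm Im\,}T_u^1\oplus{\rm Im\,}T_v^1=\mathcal U_w^*$, ${\rm Im\,}T_v^1\oplus{\rm Im\,}T_v^3=\mathscr U_v$, and the four images summing to $\mathscr U_w$ (see (\ref{Eq:emtftm})). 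Then $T_w=[T_u^1\ T_u^3\ T_v^1\ T_v^3]^{-1}$ is automatically of the triangular form of Remark \ref{rem:EM-equi}(iii), because every $\tilde v$-column lies in $\mathscr U_v$; the finer partition $[\tilde B^{\tilde u}\mid\tilde B^{\tilde v}]$ in (\ref{Eq:EMTF}) is just the column permutation of the \textbf{MTF} induced by this adapted basis, and controllability, observability and primeness transfer verbatim. So no compensation argument and no extra invariants are needed --- only an adapted choice of bases for the nested input subspaces. I recommend you replace the ``anticipated obstacle'' paragraph by this explicit construction; as written, the proposal stops just short of a proof.
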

\begin{thm}[extended Morse normal form \textbf{EMNF}]\label{Thm:EMNF}
	For $\tilde \Lambda^{\tilde u \tilde v}_{n,m,s,p}=(\tilde A,\tilde B^{\tilde u},\tilde B^{\tilde v},\tilde C,\tilde D^{\tilde u})$ {in the \textbf{EMTF}, as given by} Theorem \ref{Thm:EMTF}, there exists an extended Morse transformation $EM_{tran}$ {bringing $\tilde \Lambda^{\tilde u\tilde v}$ into $\bar \Lambda^{\bar u \bar v}=EM_{tran}(\tilde \Lambda^{\tilde u\tilde v})$ represented in the extended Morse normal form \textbf{EMNF}, that is given by $\bar \Lambda^{\bar u \bar v}_{n,m,s,p}=(\bar A,\bar B^{\bar u},\bar B^{\bar v},\bar C,\bar D^{\bar u})$, }
	where
	\begin{align}\label{Eq:EMNF}
	\left[ {\begin{smallmatrix}
		{\bar{A}}&{\bar B^{\bar u}}&\bar B^{\bar v}\\
		{\bar{C}}&{\bar D^{\bar u}}&0
		\end{smallmatrix}} \right] = \left[ {\begin{smallmatrix}
		{\bar A_{1}}&0&0&0&\vline& {\bar B^{\bar u}_{1}}&0&\vline& {\bar B^{\bar v}_{1}}&0\\
		0&{\bar A_{2}}&0&0&\vline& 0&0&\vline& 0&0\\
		0&0&{\bar A_{3}}&0&\vline& 0&{\bar B^{\bar u}_{3}}&\vline& 0&{\bar B^{\bar v}_{3}}\\
		0&0&0&{\bar A_{4}}&\vline& 0&0&\vline& 0&0\\
		\hline
		0&0&{\bar C_{3}}&0&\vline& 0&{\bar D^{\bar u}_{3}}&\vline& 0&0\Tstrut\\
		0&0&0&{\bar C_{4}}&\vline& 0&0&\vline& 0&0
		\end{smallmatrix}} \right].
	\end{align}
 {In the above \textbf{EMNF},}  the pair $(\bar A_{1}, \bar B^{\bar w}_{1})$ is controllable, where $\bar B^{\bar w}_{1}=[\bar B^{\bar u}_{1},\bar  B^{\bar v}_{1}]$; the pair $(\bar C_{4},\bar A_{4})$ is observable; the 4-tuple $(\bar A_{3},\bar B^{\bar w}_{3},\bar C_{3},\bar D^{\bar w}_{3})$ is prime, where $\bar B^{\bar w}_{3}=[\bar B^{\bar u}_{3},\bar  B^{\bar v}_{3}]$, $\tilde D^{\bar w}_{3}=[\tilde D^{\bar u}_{3},0]$. 
\end{thm}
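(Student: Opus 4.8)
The plan is to mimic, for the two-input system $\tilde\Lambda^{\tilde u\tilde v}$ in the \textbf{EMTF}, the passage from the \textbf{MTF} to the \textbf{MNF} described by \textbf{MNF} Algorithm \ref{Alg:1}, exploiting the fact that this passage uses only a state-space coordinate change, a feedback and an output injection, but \emph{no} input-coordinate transformation. Writing $w=[u^T,v^T]^T$ as in \eqref{Eq:ODEcontrol2} and setting $\tilde B^{\tilde w}=[\tilde B^{\tilde u},\tilde B^{\tilde v}]$, $\tilde D^{\tilde w}=[\tilde D^{\tilde u},0]$, the \textbf{EMTF} \eqref{Eq:EMTF} is precisely the \textbf{MTF} \eqref{Eq:MTF} of the associated one-input system $\tilde\Lambda^{\tilde w}$, in which the input columns are merely kept grouped into a $u$-block and a $v$-block (and, inside each, into a controllable and a prime sub-block). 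Since all transformations I apply will never mix input columns, they preserve this finer grouping, so it suffices to perform the \textbf{MNF} reduction of Proposition \ref{Pro:MMNF} for $\tilde\Lambda^{\tilde w}$ and then to read off that the outcome is automatically the \textbf{EMNF} \eqref{Eq:EMNF}.

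First I would carry out Step 1 of \textbf{MNF} Algorithm \ref{Alg:1}: choose a feedback $F_{MN}$ (acting on $w$) and an output injection $K_{MN}$, with the zero patterns prescribed there, making the spectra of the diagonal blocks $\bar A_1,\bar A_2,\bar A_3,\bar A_4$ mutually disjoint; the existence of such a choice is part of Proposition \ref{Pro:MMNF}. The only extra point to verify is that this feedback is realizable as an \emph{extended} Morse transformation, i.e. that $F_{MN}$, viewed as a map on $w$, can be written in the triangular form $F_w=[\,F_u^T,\ (F_v+RF_u)^T\,]^T$ of Remark \ref{rem:EM-equi}(iii). This is immediate: take $R=0$ and let $F_u$, $F_v$ be the $u$- and $v$-row blocks of $F_{MN}$. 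Moreover, the output injection enters \eqref{Eq:EME} only through $A\mapsto A+KC$ and $B^u\mapsto B^u+KD^u$, leaving $B^v$ untouched (because the $v$-channel has no feedthrough), so it respects the $u/v$ split.

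Next I would apply a state transformation $T_{MN}$ of the block-triangular shape of Step 3 of Algorithm \ref{Alg:1}, whose blocks $T^1_{MN},\dots,T^5_{MN}$ solve the (constrained) Sylvester equations \eqref{Eq:Slyvester}--\eqref{Eq:CSlyvester} written for the present data, with $\bar B_3$ and $\bar B_1^2$ replaced by the concatenated blocks $\bar B^{\bar w}_3=[\bar B^{\bar u}_3,\bar B^{\bar v}_3]$ and $\bar B^{\bar w}_{12}=[\bar B^{\bar u}_{12},\bar B^{\bar v}_{12}]$. The unconstrained equations for $T^1_{MN},T^3_{MN},T^4_{MN}$ have unique solutions by Sylvester's theorem, thanks to the spectral disjointness arranged in Step 1. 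Since $T_{MN}$ is a pure state change it has $T_u=T_v=I$, hence it is trivially an extended Morse transformation and leaves the $u/v$ grouping of the input columns intact; in particular, eliminating the single block $\bar B^{\bar w}_{12}$ kills $\bar B^{\bar u}_{12}$ and $\bar B^{\bar v}_{12}$ \emph{simultaneously}, producing the zero $(1,2)$-input blocks of \eqref{Eq:EMNF}.

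The main obstacle is the solvability of the two \emph{constrained} Sylvester equations, in particular the one for $T^2_{MN}$, which must satisfy both $\bar A_1T^2_{MN}-T^2_{MN}\bar A_3=-\bar A_{13}$ and the matrix constraint $T^2_{MN}\bar B^{\bar w}_3=-\bar B^{\bar w}_{12}$ obtained by concatenating the two requirements $T^2_{MN}\bar B^{\bar u}_3=-\bar B^{\bar u}_{12}$ and $T^2_{MN}\bar B^{\bar v}_3=-\bar B^{\bar v}_{12}$ into one equation. This is exactly the constrained equation \eqref{Eq:CSlyvester} for the one-input system $\tilde\Lambda^{\tilde w}$, whose solvability was established in Proposition \ref{Pro:MMNF} via the primeness of the 4-tuple $(\bar A_3,\bar B^{\bar w}_3,\bar C_3,\bar D^{\bar w}_3)$; dually, the constraint $\bar C_3T^5_{MN}=-\bar C_4$ for $T^5_{MN}$ is solvable by the same primeness together with the observability of $(\bar C_4,\bar A_4)$. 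Thus no solvability question arises beyond the one already settled for $\tilde\Lambda^{\tilde w}$. Finally, controllability of $(\bar A_1,\bar B^{\bar w}_1)$, observability of $(\bar C_4,\bar A_4)$ and primeness of $(\bar A_3,\bar B^{\bar w}_3,\bar C_3,\bar D^{\bar w}_3)$ are inherited from the \textbf{EMTF} of Theorem \ref{Thm:EMTF}, since feedback, output injection and an invertible state change preserve these properties; assembling $F_{MN}$, $K_{MN}$, $T_{MN}$ (together with $T_u=T_v=I$, $R=0$) into a single extended Morse transformation then brings $\tilde\Lambda^{\tilde u\tilde v}$ into $\bar\Lambda^{\bar u\bar v}$ in the desired \textbf{EMNF} \eqref{Eq:EMNF}.
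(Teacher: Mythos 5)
Your proposal is correct and follows essentially the same route as the paper: the paper's proof also reduces to Proposition \ref{Pro:MMNF} applied to the associated system $\tilde\Lambda^{\tilde w}$, observing that the Morse transformation there has identity input-coordinate matrix (hence trivially of the required triangular form), so it lifts to an extended Morse transformation preserving the $u/v$ block structure. Your additional checks (that $F_{MN}$ factors as $F_w$ with $R=0$, that output injection leaves $B^v$ untouched since $D^w=[D^u,0]$, and that the constrained Sylvester equations are the concatenated ones already solved) simply make explicit what the paper leaves implicit.
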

 {The proofs of Theorem \ref{Thm:EMTF} and Theorem \ref{Thm:EMNF} are given in} Section \ref{ProofThm:EMTFNF}.
\section{From the extended Morse normal form \textbf{EMNF} to the feedback canonical form  \textbf{FBCF}}\label{sec:4} 
We show that, with a suitable choice of an extended Morse transformation for each subsystem in the \textbf{EMNF} of Theorem \ref{Thm:EMNF}, we can bring the \textbf{EMNF} into {the} extended Morse canonical form \textbf{EMCF}. Below the upper indices refer to: $c$ to controllable, $nn$ to non-controllable and non-observable, $p$ to prime, $o$ to observable.
If an ODECS $\Lambda_{EM}^{uv}=(A_{EM},B^u_{EM},B^v_{EM},C_{EM},D^u_{EM})$ is in the \textbf{EMCF}, then the matrices $A_{EM},B^u_{EM},B^v_{EM},C_{EM},D^u_{EM}$  are   given by
\begin{align}\label{Eq:EMCF}
	\left[ {\begin{smallmatrix}
		{ A_{EM}}&{B_{EM}^u}&{B_{EM}^v}\\  
		{ C_{EM}}&{ D_{EM}^u}&0
		\end{smallmatrix}}\right] =\left[ \begin{smallmatrix}
	{ A^{cu}}&0&0&0&0&0&\vline& {B^{cu}}&0&\vline& 0&0\\
	0&A^{cv}&0&0&0&0&\vline& 0&0&\vline& { B^{cv}}&0\\
	0&0&A^{nn}&0&0&0&\vline& 0&0&\vline& 0&0\\
	0&0&0&A^{pu}&0&0&\vline& 0&{B^{pu}}&\vline& 0&0\\
	0&0&0&0&A^{pv}&0&\vline& 0&0&\vline& 0&B^{pv}\\
	0&0&0&0&0&A^o&\vline& 0&0&\vline& 0&0\\ 
	\hline
	\\ 
	 0&0&0&C^{pu}&0&0&\vline& 0&D^{pu}&\vline& 0&0\\
	0&0&0&0&C^{pv}&0&\vline& 0&0&\vline& 0&0\\
	0&0&0&0&0&C^o&\vline& 0&0&\vline& 0&0
	\end{smallmatrix}\right],
	\end{align}
with {the matrices and their invariants of the following form:}
\begin{enumerate}[(i)]
	\item 
	$A^{cu}={\rm diag}\{ A^{cu}_{\epsilon_1 },..., A^{cu}_{\epsilon_a }\}$,  $B^{cu}={\rm diag}\{ B^{cu}_{\epsilon_1 },..., B^{cu}_{\epsilon_a }\}$, $A^{cv}={\rm diag}\{ A^{cv}_{\bar {\epsilon}_b },..., A^{cv}_{\bar {\epsilon}_b }\}$,\\ $B^{cv}={\rm diag}\{B^{cv}_{\bar {\epsilon}_1 },..., B^{cv}_{\bar {\epsilon}_b }\}$, where
	\begin{align*}
	A_{\epsilon}^{cu} = \left[ {\begin{smallmatrix}
		0&I_{\epsilon-1}\\
		0&0
		\end{smallmatrix}} \right] \in {\mathbb{R}^{\epsilon  \times \epsilon }},\  B_\epsilon ^{cu} = \left[ {\begin{smallmatrix}
		0\\
		1
		\end{smallmatrix}} \right]\in \mathbb{R}^{\epsilon},\ A_{\bar {\epsilon}}^{cv} = \left[ {\begin{smallmatrix}
		0&I_{\bar \epsilon-1}\\
		0&0
		\end{smallmatrix}} \right] \in {\mathbb{R}^{\bar {\epsilon}  \times \bar {\epsilon} }},\  B_{\bar {\epsilon}} ^{cv} = \left[ {\begin{smallmatrix}
		0\\
		1
		\end{smallmatrix}} \right]\in \mathbb{R}^{\bar {\epsilon}}.
	\end{align*}
	The integers $\epsilon_1,...,\epsilon_a\in \mathbb{N^+}$ are the controllability indices of $( A^{cu},B^{cu})$, the integers $\bar {\epsilon}_1,...,\bar {\epsilon}_b\in \mathbb{N^+}$ are the controllability indices of $( A^{cv},B^{cv})$.
	\item $A^{nn}\in \mathbb R^{n_2\times n_2}$ is unique up to similarity {and can always be put in the real Jordan form}.
	\item Both the  4-tuple $(A^{pu}, B^{pu}, C^{pu}, D^{pu})$ and the triple $(A^{pv},B^{pv},C^{pv})$ are {prime, and thus} controllable and observable. That is,
	\[\left[ {\begin{smallmatrix}
		A^{pu}&B^{pu}\\
		C^{pu}&D^{pu}
		\end{smallmatrix}} \right] = \left[ {\begin{smallmatrix}
		{{\hat  A^{pu}}}&\vline& {\hat  B^{pu}}&0\\
		\hline
		{\hat  C^{pu}}&\vline& 0&0\\
		0&\vline& 0&{{I_\delta }}
		\end{smallmatrix}} \right],\]
	where $\left[ {\begin{smallmatrix}
		{{\hat  A^{pu}}}&{{\hat  B^{pu}}}\\
		{{\hat  C^{pu}}}&0
		\end{smallmatrix}} \right]$ is square and invertible and $\delta={\rm rank\,} \hat  D^{pu}\in \mathbb N $, and the matrices 
	$$
	\begin{array}{ccc}
	\hat  A^{pu}={\rm diag}\{\hat  A^{pu}_{\sigma_{1} },...,\hat  A^{pu}_{\sigma_c }\}, & \hat  B^{pu}={\rm diag}\{\hat  B^{pu}_{{\sigma_{1} }},...,\hat  B^{pu}_{{\sigma_c }}\}, & \hat  C^{pu}={\rm diag}\{\hat  C^{pu}_{{{\sigma_{1} } }},...,\hat  C^{pu}_{{{\sigma_{c} } }}\}, \\ A^{pv}={\rm diag}\{ A^{pv}_{{\bar {\sigma}}_{1} },..., A^{pv}_{{\bar {\sigma}}_d}\}, & B^{pv}={\rm diag}\{ B^{pv}_{{{\bar {\sigma}}_1}},...,B^{pv}_{{{\bar {\sigma}}_d }}\}, & C^{pv}={\rm diag}\{ C^{pv}_{{{{\bar {\sigma}}_{1} } }},..., C^{pv}_{{{{\bar {\sigma}}_{d} } }}\},
	\end{array}
	$$ 
	where
	\begin{align*}
	\begin{array}{lll}
	\hat  A^{pu}_{{{\sigma } }} = \left[ {\begin{smallmatrix}
		0&I_{\sigma-1}\\
		0&0
		\end{smallmatrix}} \right] \in {\mathbb{R}^{\sigma  \times \sigma }},& \hat  B^{pu}_{{{\sigma } }}= \left[ {\begin{smallmatrix}
		0\\
		1
		\end{smallmatrix}} \right]\in \mathbb{R}^{\sigma \times 1 },& \hat  C^{pu}_{{{\sigma } }} = \left[ {\begin{smallmatrix}
		1&0
		\end{smallmatrix}} \right] \in {\mathbb{R}^{1 \times\sigma}},\\ 
	A^{pv}_{{{{\bar {\sigma}} } }} = \left[ {\begin{smallmatrix}
		0&I_{\bar {\sigma}-1}\\
		0&0
		\end{smallmatrix}} \right] \in {\mathbb{R}^{{\bar {\sigma}}  \times {\bar {\sigma}} }},& B^{pv}_{{{{\bar {\sigma}} } }}= \left[ {\begin{smallmatrix}
		0\\
		1
		\end{smallmatrix}} \right]\in \mathbb{R}^{{\bar {\sigma}} \times 1 },& C^{pv}_{{{{\bar {\sigma}} } }} = \left[ {\begin{smallmatrix}
		1&0
		\end{smallmatrix}} \right] \in {\mathbb{R}^{{1\times\bar {\sigma}}}}.
	\end{array}
	\end{align*}
	The integers $\sigma_{1},...,\sigma_c\in \mathbb{N}^+$ are the controllability indices of the pair $(\hat  A^{pu},\hat  B^{pu})$ and they are equal to the observability indices of the pair $(\hat  C^{pu}, \hat  A^{pu}) $. The integers ${\bar {\sigma}}_{1},...,{\bar {\sigma}}_d\in \mathbb{N^+}$ are the controllability indices of the pair $(A^{pv}, B^{pv})$ and they are equal to the observability indices of the pair $( C^{pv},  A^{pv}) $.
	\item 
	$ A^o={\rm diag}\{ A^o_{\eta_1 },...,A^o_{\eta_e }\}$, $ C^o={\rm diag}\{ C^o_{\eta_1 },..., C^o_{\eta_e }\}$, 
	where 
	$$
	\begin{smallmatrix}
	A^o_{\eta } = \left[ {\begin{smallmatrix}
		0&I_{\eta-1}\\
		0&0
		\end{smallmatrix}} \right] \in {\mathbb{R}^{\eta  \times \eta}}, \ \ \ \   C^o_{\eta }= \left[
	{\begin{smallmatrix}
		1&0
		\end{smallmatrix}} \right] \in {\mathbb{R}^{1 \times \eta}}.
	\end{smallmatrix}
	$$
	The integers $\eta_1,...,\eta_e\in \mathbb{N}^+$ are the observability indices of the pair $( C^o, A^o)$.
\end{enumerate}
\begin{thm}[extended Morse canonical form \textbf{EMCF}]\label{Thm:EMCF}   For any $$\Lambda^{uv}=\Lambda^{uv}_{n,m,s,p}=(A,B^u,B^v,C,D^u),$$ there exists an extended Morse transformation $EM_{tran}$ bringing $\Lambda^{uv}$ into $$\Lambda_{EM}^{uv}=(A_{EM},B^u_{EM},B^v_{EM},C_{EM},D^u_{EM})=EM_{tran}(\Lambda^{uv}),$$ represented \blue{by} the extended Morse canonical form \textbf{EMCF}.
\end{thm}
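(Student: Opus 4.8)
The plan is to deduce the \textbf{EMCF} from the extended Morse normal form \textbf{EMNF} of Theorem~\ref{Thm:EMNF}. First I would apply Theorem~\ref{Thm:EMNF} to bring $\Lambda^{uv}$ into the form (\ref{Eq:EMNF}), which already decouples the system into four independent blocks: the controllable pair $(\bar A_1,\bar B^{\bar w}_1)$, the non-controllable-non-observable block $\bar A_2$, the prime $4$-tuple $(\bar A_3,\bar B^{\bar w}_3,\bar C_3,\bar D^{\bar w}_3)$, and the observable pair $(\bar C_4,\bar A_4)$. Since in (\ref{Eq:EMNF}) all coupling between these blocks has been annihilated, an extended Morse transformation whose ingredients $T_x,T_u,T_v,T_y,F_u,F_v,R,K$ are all block-diagonal with respect to this partition preserves the decoupling; hence it suffices to normalize each block separately into the corresponding sub-block of (\ref{Eq:EMCF}).

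The two ``passive'' blocks are immediate. No input reaches $\bar A_2$ and no output detects it, so the only admissible action is a state coordinate change $T_x$; the real Jordan theorem turns $\bar A_2$ into $A^{nn}$, unique up to similarity, which is block (ii). For the observable pair $(\bar C_4,\bar A_4)$ the admissible actions are $T_x$, the output transformation $T_y$, and the output injection $K$, which is precisely the situation dual to a controllable pair; the dual Brunovsky construction \cite{brunovsky1970classification} then produces the observable integrator chains $A^o,C^o$ with observability indices $\eta_1,\dots,\eta_e$, giving block (iv).

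The heart of the argument is the treatment of the controllable and prime blocks, where the two kinds of inputs must be separated. For the controllable pair, I would introduce the smallest $\bar A_1$-invariant subspace $\mathcal R^u$ containing $\operatorname{Im}\bar B^{\bar u}_1$, i.e.\ the part reachable by $u$ alone. On $\mathcal R^u$ the pair is controllable by $u$, and since $T_x$ restricted to this block, $T_u$ and $F_u$ are exactly the $u$-admissible transformations (none of which can involve $v$), the Brunovsky construction yields $A^{cu},B^{cu}$ with controllability indices $\epsilon_1,\dots,\epsilon_a$ of $(\bar A_1,\bar B^{\bar u}_1)$. On the quotient $\mathbb R^{n_1}/\mathcal R^u$ the image of $\bar B^{\bar u}_1$ vanishes, so the induced pair is controllable by $v$, and using $T_v,F_v,R$ one obtains $A^{cv},B^{cv}$ with indices $\bar\epsilon_1,\dots,\bar\epsilon_b$; this gives block (i). The prime block is normalized analogously: the part reachable and detectable through $u$ carries the direct feedthrough and becomes the square invertible $4$-tuple $(\hat A^{pu},\hat B^{pu},\hat C^{pu},0)$ together with the $I_\delta$ block determined by $\delta=\operatorname{rank}\hat D^{pu}$, while the complementary part, accessible only through $v$ and having no feedthrough since $D^v=0$, becomes the triple $(A^{pv},B^{pv},C^{pv})$; primeness guarantees each admits the stated integrator-chain normal form, yielding block (iii).

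I expect the main obstacle to lie precisely in this $u$/$v$ splitting. The feedback group for $w=(u,v)$ is not the full feedback group but its subgroup of block-lower-triangular input transformations in (\ref{Eq:EME}); consequently the controllable and prime blocks reduce not to the ordinary multi-input Brunovsky and prime forms but to refinements in which the integrator chains are partitioned into $u$-driven and $v$-driven families. One must verify that this partition is canonical — the $u$-chains realizing the controllability indices of $(\bar A_1,\bar B^{\bar u}_1)$ and the $v$-chains those of the quotient by $\mathcal R^u$ — and, crucially, that the residual coupling between the two families can be removed within the triangular group so as to reach the fully block-diagonal form of (\ref{Eq:EMCF}); the surjectivity of $B^{cv}$ (respectively $B^{pv}$) onto the $v$-part is what makes the decoupling feedbacks $F_v,R$ available. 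Once these refined splittings are established for blocks (i) and (iii), assembling the four normalized blocks produces an $EM_{tran}$ carrying $\Lambda^{uv}$ into (\ref{Eq:EMCF}), completing the proof.
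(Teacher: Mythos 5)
Your overall strategy (reduce to the \textbf{EMNF} and then normalize the four decoupled blocks separately, with (ii) and (iv) being routine) is exactly the paper's, but the key step --- the $u$/$v$ splitting of the controllable (and prime) block --- is done in the wrong direction, and this is a genuine error rather than a presentational difference. You propose to base the splitting on $\mathcal R^u=\langle \bar A_1\mid \operatorname{Im}\bar B_1^{\bar u}\rangle$ and to assign to the $u$-chains the controllability indices of $(\bar A_1,\bar B_1^{\bar u})$. But $\mathcal R^u$ is not an invariant of the admissible (triangular) group: under (\ref{Eq:EME}) one has $\tilde B^u=T_x(B^u+KD^u+B^vR)T_u^{-1}$, so the $v$-channel can absorb or create $u$-directions via $R$, whereas $\operatorname{Im}B^v$ itself is rigid ($\tilde B^{\tilde v}=T_xB^vT_v^{-1}$). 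A one-dimensional example already breaks your recipe: for $\dot x=u+v$ ($n=1$, $m=s=1$) you get $\mathcal R^u=\mathbb R$, hence a $u$-chain with $\epsilon_1=1$ and an empty $v$-part; but in the \textbf{EMCF} the row block of $B^v_{EM}$ over $A^{cu}$ must vanish, which is impossible since $T_xB^vT_v^{-1}\neq0$. The correct canonical form here is the $v$-chain $\dot x=\tilde v$, reached by taking $R=-1$ so that $\tilde B^u=B^u+B^vR=0$. In other words, because $v$ is the \emph{stronger} input, chains that can be driven by $v$ must be assigned to $v$, and the $u$-chains are only what remains after the $v$-reachable directions are exhausted --- the opposite of your priority.

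The paper resolves this by first putting $(\bar A_1,[\bar B_1^{\bar u}\ \bar B_1^{\bar v}])$ into the classical chain form (\ref{Eq:chainform}) and then running an explicit induction over the chains in decreasing order of length $\kappa_1\ge\cdots\ge\kappa_{m+s}$: at step $k+1$ the chain is declared a $v$-chain whenever some coefficient $\bar b^j_{k+1}$ of a not-yet-assigned $v_j$ is nonzero (and the corresponding cross-terms are removed by triangular transformations of the form $\tilde v_j=\sum b^l u_l+\sum\bar b^l v_l$ and state changes $\tilde x^j_i=x^j_i-\tilde{\bar b}_i x^{\cdot}$), and a $u$-chain only when all such coefficients vanish. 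Your proof would need to replace the $\mathcal R^u$-based splitting by this (or an equivalent $v$-first) construction, and the same correction propagates to block (iii), where you additionally skip the nontrivial normalization of the output matrix $C_p$ (the paper must show that the vectors $\hat c^u_i,c^v_j$ assemble into an invertible $T_y^{-1}$ before the prime block reaches the form in item (iii) of the \textbf{EMCF}).
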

The proof will be given in Section \ref{ProofThm:EMCF}.  Throughout if we only consider the differential equation of (\ref{Eq:ODEcontrol}) (meaning (\ref{Eq:ODEcontrol}) without {the} output $y$), we denote it as $ \Lambda^{uv}_{n,m,s}=(A,B^u,B^v)$.  
Now we introduce the \emph{driving variables $v$-reduction} and \emph{implicitation}  (compare \cite{chen2021geometric}) to reduce the driving variables $v$ and implicit the \textbf{EMCF} to a DACS.
\begin{defn}[$v$-reduction and implicitation]\label{Def:red_impl}
For	a control system $\Lambda^{uz^2}$ and	its  prolongation  $\mathbf \Lambda^{uv}$, given by (\ref{Eq:z2u}) and (\ref{Eq:prol_z2u}), \blue{respectively},	
 {the inverse operation of prolongation will be called the $v$-reduction, that is, the $v$-reduction of} $	\mathbf{\Lambda}^{uv}$ is ${\Lambda}^{uz^2}$. For an ODECS ${\Lambda}^{uz^2}$, the implicitation of ${\Lambda}^{uz^2}$ is a DACS ${\rm Impl}({\Lambda}^{uz^2})$ constructed by setting the output $y=0$, that is, 
	$${\rm Impl}({\Lambda}^{uz^2}):\left[ \begin{matrix}
	I&0\\
	0&0
	\end{matrix}\right] \left[ \begin{matrix}
	\dot z^1\\
	\dot z^2
	\end{matrix}\right]= \left[ \begin{matrix}
	H_1&H_2\\
	H_3&H_4
	\end{matrix}\right]\left[ \begin{matrix}
	z^1\\
	z^2
	\end{matrix}\right]+\left[ \begin{matrix}
	L_1\\
	L_2
	\end{matrix}\right]u.$$
\end{defn}
\begin{rem}\label{rem:impl}
	If $\Delta^u={\rm Impl}(\Lambda^{uz^2})$, where $\Lambda^{ uz^2}$ is the $v$-reduction of $\mathbf{\Lambda}^{uv}$, then $\mathbf{\Lambda}^{uv}\in \mathbf{Expl}(\Delta^u)$.
\end{rem} 
Then with the help of the above \emph{$v$-reduction and implicitation} procedure, we can regard the feedback canonical form \textbf{FBCF} for DACSs of {the} form $\Delta^u_{l,n,m}=(E,H,L)$ given in \cite{loiseau1991feedback} as a corollary of Theorem \ref{Thm:EMCF}. 
In the following, in order to save space and simplify notations, we denote
$$
\begin{array}{l}
K_i=\left[ {\begin{smallmatrix}
	0&I_{i-1}
	\end{smallmatrix}} \right]\in \mathbb{R}^{(i-1)  \times i }, \ L_i=\left[ {\begin{smallmatrix}
	I_{i-1}&0
	\end{smallmatrix}} \right]\in \mathbb{R}^{(i-1)  \times i}, \  N_i=\left[ {\begin{smallmatrix}
	0&0\\
	I_{i-1}&0
	\end{smallmatrix}} \right]\in \mathbb{R}^{i  \times i}, \  e_{i}=\left[ {\begin{smallmatrix}
	0\\
	1
	\end{smallmatrix}} \right]\in \mathbb{R}^{i} ,
\end{array}
$$
\blue{ where $\beta=(\beta_1,\ldots,\beta_k)$, $|\beta|=\beta_1+\cdots+\beta_k$, and}
$$
\begin{array}{ll}
N_{\beta}={\rm diag} \left\lbrace N_{\beta_1},\ldots,N_{\beta_k}\right\rbrace \in \mathbb R^{|\beta|\times |\beta|}& K_{\beta}={\rm diag}\left\lbrace  K_{\beta_1},\ldots,K_{\beta_k}\right\rbrace \in \mathbb R^{(|\beta|-k)\times |\beta|},\\ L_{\beta}={\rm diag}\left\lbrace L_{\beta_1},\ldots,L_{\beta_k}\right\rbrace \in \mathbb R^{(|\beta|-k)\times |\beta|},&\mathcal E_{\beta}={\rm diag}\left\lbrace  e_{\beta_1},\ldots,e_{\beta_k}\right\rbrace \in \mathbb R^{|\beta|\times k},
\end{array} 
$$
\begin{thm}[feedback canonical form of  {DACSs}  \cite{loiseau1991feedback}]\label{Cor:FCF}
	Any  {DACS} $\Delta^u_{l,n,m}=(E,H,L)$ is ex-fb-equivalent  to the following feedback canonical form \textbf{FBCF}: 
		\begin{align*}
		\left( \left[ {\begin{smallmatrix}
			{{I_{\left| \epsilon'  \right|}}}&0&0&0&0&0\\
			0&{{L_{\bar \epsilon' }}}&0&0&0&0\\
			0&0&{{I_{{n_\rho }}}}&0&0&0\\
			0&0&0&{K_{\sigma'}^T}&0&0\\
			0&0&0&0&{{N_{\bar \sigma' }}}&0\\
			0&0&0&0&0&{L_{\eta' }^T}
			\end{smallmatrix}} \right],\left[ \begin{smallmatrix}
		{N_{\epsilon'} ^T}&0&0&0&0&0\\
		0&{{K_{\bar \epsilon'}}}&0&0&0&0\\
		0&0&{{A_\rho }}&0&0&0\\
		0&0&0&{L_{\sigma'} ^T}&0&0\\
		0&0&0&0&I_{\left| {\bar \sigma' } \right|}&0\\
		0&0&0&0&0&K_{\eta'}^T
		\end{smallmatrix} \right], \left[ {\begin{smallmatrix}
			{{\mathcal E_{\epsilon'} }}&0&0\\
			0&0&0\\
			0&0&0\\
			0&{{\mathcal E_{\sigma'} }}&0\\
			0&0&0\\
			0&0&0
			\end{smallmatrix}} \right]\right) ,
		\end{align*} 
	where  $\epsilon'=(\epsilon'_1,\ldots,\epsilon'_{a'})\in (\mathbb N^+)^{a'}$, $\bar \epsilon' =(\bar \epsilon'_1,\ldots,\bar \epsilon'_{b'})\in (\mathbb N^+)^{b'} $, $\sigma'=(\sigma'_1,\ldots,\sigma'_{c'})\in (\mathbb N^+)^{c'}$, $\bar \sigma'=(\bar \sigma'_1,\ldots,\bar \sigma'_{d'})\in (\mathbb N^+)^{d'}$, $\eta'=(\eta'_1,\ldots,\eta'_{e'})\in (\mathbb N^+)^{e'}$ are multi-indices {and} the matrix $A_{\rho}$ is {given} up to similarity ( {and can always be put into real Jordan form}).
\end{thm}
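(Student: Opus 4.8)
The plan is to obtain the \textbf{FBCF} as the implicitation of the \textbf{EMCF}, exploiting the explicitation/implicitation correspondence established above. First I would pick any $(Q,v)$-explicitation $\Lambda^{uv}\in \mathbf{Expl}(\Delta^u)$, as in Definition \ref{Def:Qvexpl_lin}. By Theorem \ref{Thm:EMCF} there is an extended Morse transformation $EM_{tran}$ carrying $\Lambda^{uv}$ into its \textbf{EMCF} $\Lambda^{uv}_{EM}$, so that $\Lambda^{uv}\mathop\sim\limits^{EM}\Lambda^{uv}_{EM}$. The key structural fact is that $\Lambda^{uv}_{EM}$ is again the explicitation of a DACS: since the columns of $B^v_{EM}$ (coming from $B^{cv}$ and $B^{pv}$) span a coordinate subspace, the \textbf{EMCF} is, up to reordering of coordinates, a prolongation, so applying the $v$-reduction and implicitation of Definition \ref{Def:red_impl} produces a DACS $\Delta^u_{EM}$ with $\Lambda^{uv}_{EM}\in\mathbf{Expl}(\Delta^u_{EM})$ by Remark \ref{rem:impl}. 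Because EM-equivalence of explicitations corresponds to ex-fb-equivalence of the underlying DACSs (Theorem \ref{Thm:equivalence}), we get $\Delta^u\mathop\sim\limits^{ex-fb}\Delta^u_{EM}$, and it only remains to show that $\Delta^u_{EM}$, i.e. the implicitation of the \textbf{EMCF}, is ex-fb-equivalent to the displayed \textbf{FBCF}.

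This last step reduces to a block-by-block computation, because the implicitation respects the block-diagonal decomposition (\ref{Eq:EMCF}) of the \textbf{EMCF}. For each of the six subsystem types in items (i)--(iv) I would form $E\dot x=Hx+Lu$ by keeping the differential rows as the top (full-row-rank) block of $E$, turning the directions of $B^v_{EM}$ (whose image equals $\ker E$) into free variables via the $v$-reduction, and appending the output relations $0=C_{EM}x+D^u_{EM}u$ as the algebraic, zero-$E$-row block. Carrying this out yields the correspondence: the controllable-by-$u$ chains $(A^{cu},B^{cu})$ give the $(I_{|\epsilon'|},N^T_{\epsilon'},\mathcal E_{\epsilon'})$ block; the controllable-by-$v$ chains $(A^{cv},B^{cv})$ give the $(L_{\bar\epsilon'},K_{\bar\epsilon'},0)$ block; the part $A^{nn}$ gives the $(I_{n_\rho},A_\rho,0)$ block with $A_\rho=A^{nn}$ up to similarity; the prime-by-$u$ data $(\hat A^{pu},\hat B^{pu},\hat C^{pu})$ together with the rank-$\delta$ feedthrough give the $(K^T_{\sigma'},L^T_{\sigma'},\mathcal E_{\sigma'})$ block; the prime-by-$v$ triple $(A^{pv},B^{pv},C^{pv})$ gives the $(N_{\bar\sigma'},I_{|\bar\sigma'|},0)$ block; and the observable pair $(A^o,C^o)$ gives the $(L^T_{\eta'},K^T_{\eta'},0)$ block.

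The main obstacle, and the part requiring the most care, is the index and coordinate bookkeeping in these block computations. Two effects must be tracked. First, for the prime-by-$u$ and observable blocks the constraint $y=0$ contributes one algebraic row per chain with no compensating driving variable, producing the index shift $\sigma'=\sigma+1$ and $\eta'=\eta+1$ between the \textbf{EMCF} and \textbf{FBCF} indices, while the feedthrough $I_\delta$ of $D^{pu}$ collapses to the trivial constraints $0=u$, i.e. the $\sigma'_i=1$ blocks; for the controllable-by-$v$ and prime-by-$v$ blocks the driving variable and the constraint balance, so no shift occurs. Second, the raw implicitation of each chain does not land exactly on the matrices $N^T,L,K,N,\ldots$ of the statement but on a representation differing by a permutation of equations and of state variables; one absorbs these permutations into $Q\in Gl(l,\mathbb R)$ and $P\in Gl(n,\mathbb R)$ of Definition \ref{Def:ex-fb-eq} (using the identities $A^{cu}_\epsilon=N^T_\epsilon$, etc.) to reach the exact \textbf{FBCF}. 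Finally, I would check that each resulting pencil has the claimed Kronecker structure and that the uniqueness of $A_\rho$ up to similarity propagates from the uniqueness of $A^{nn}$ in the \textbf{EMCF}, which completes the argument.
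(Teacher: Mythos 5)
Your proposal follows essentially the same route as the paper: the authors also derive the \textbf{FBCF} as a corollary of Theorem \ref{Thm:EMCF} by noting that the \textbf{FBCF} is the implicitation of the $v$-reduction of the \textbf{EMCF}, invoking Remark \ref{rem:impl} to get $\textbf{EMCF}\in\mathbf{Expl}(\textbf{FBCF})$, and then applying Theorem \ref{Thm:equivalence}; your block-by-block correspondence and index bookkeeping (the shifts $\sigma'=\sigma+1$, $\eta'=\eta+1$, the $\delta$ trivial blocks $\sigma'_i=1$, and the unshifted $\bar\epsilon$, $\bar\sigma$ indices) reproduce exactly the content of Remark \ref{rem:5}(ii)--(iii). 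Your write-up is in fact somewhat more explicit than the paper's, which leaves the implicitation computation as ``not hard to see.''
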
  
\begin{rem}\label{rem:5}
	(i) The above \blue{theorem} of the \textbf{FBCF} of DACSs is a corollary of Theorem \ref{Thm:EMCF}. Indeed, for any DACS $\Delta^u=(E,H,L)$, we can {construct} an ODECS $\Lambda^{uv}\in \mathbf{Expl}(\Delta^u)$. Then, by Theorem \ref{Thm:EMCF}, we have $\Lambda^{uv}\mathop  \sim \limits^{EM} \textbf{EMCF}$.  {It is not hard to see that the \textbf{FBCF} is the implicitation of the $v$-reduction (see Definition \ref{Def:red_impl}) of the \textbf{EMCF}.  A crucial observation is that $\textbf{EMCF}\in \mathbf{Expl}(\textbf{FBCF})$ (see Remark~\ref{rem:impl}).} Thus, by Theorem \ref{Thm:equivalence}, we {conclude}  $\Delta^u\mathop  \sim \limits^{ex-fb}\textbf{FBCF}$ (since $\Lambda^{uv}\mathop  \sim \limits^{EM} \textbf{EMCF}$).
	
	(ii) There exists a perfect correspondence between the six subsystems of the \textbf{EMCF} and {their counterparts} of the \textbf{FBCF}. Morse specifically, 
		$$
		\begin{small}
	\begin{array}{lll}
	(A^{cu},B^{cu})\leftrightarrow(I_{|\epsilon'|},N_{\epsilon'}^T,\mathcal E_{\epsilon'}),&(A^{cv},B^{cv})\leftrightarrow(L_{\bar {\epsilon}'}, K_{\bar {\epsilon}'},0),&A^{nn}\leftrightarrow(I_{n_{\rho}},A_{\rho}),\\
	(A^{pu},B^{pu},C^{pu},D^{pu})\leftrightarrow(K_{\sigma'}^T,L_{\sigma'}^T,\mathcal E_{\sigma'}),&(A^{pv},B^{pv},C^{pv})\leftrightarrow(N_{\bar {\sigma}'}, I_{|\bar {\sigma}'|},0),& (C^{o},A^{o})\leftrightarrow(L_{\eta'}^T,K_{\eta'}^T,0).
	\end{array}
	\end{small}
	$$
		 

 (iii) Since the \textbf{FBCF}  is the implicitation of {the $v$-reduction of} the \textbf{EMCF}, it is easy to observe that the indices  \blue{of} the \textbf{FBCF}  and \textbf{EMCF}  have the following relations:  $a=a'$ and $\epsilon_k=\epsilon'_k$ for $k=1,\ldots,a$;  $b=b'$ and $\bar \epsilon_k=\bar \epsilon'_k$ for $k=1,\ldots,b$; $n_2=n_{\rho}$ and $A^{nn} \approx A_{\rho}$ ( {similar matrices});
   $c+\delta=c'$ and $\sigma'_1=\sigma'_2=\cdots=\sigma'_{\delta}=1$, $\sigma'_{\delta+1}=\sigma_1+1$, $\sigma'_{\delta+2}=\sigma_2+1$, $\ldots$, $\sigma'_{\delta+c}=\sigma_c+1$; moreover, $d=d'$ and $\bar \sigma_k=\bar \sigma'_k$ for  $k=1,\ldots,d$;  $e=e'$ and $\eta_k+1=\eta'_k$ for $k=1,\ldots,e$.   
\end{rem}
 In an  algorithm below, we summarize how to construct the \textbf{FBCF} {for} a given DACS $\Delta^u_{l,n,m}=(E,H,L)$ based on the explicitation procedure.
 \setcounter{algorithm}{5}
 \begin{small}
\begin{algorithm}[htp!]\caption{the construction of the \textbf{FBCF} for linear DACSs via the explicitation}\label{Alg:2}
	\begin{algorithmic}[1]  
		\Require{ Consider a DACS $\Delta^u_{l,n.m}=(E,H,L)$ with  $E\in \mathbb{R}^{l\times n}$, $H\in \mathbb{R}^{l\times n}$, $L\in \mathbb{R}^{l\times m}$.} 
		\Stepa{  \blue{Construct} an ODECS  $\Lambda^{uv}$ such that $\Lambda^{uv}\in \mathbf{Expl}(\Delta^u)$ by Definition \ref{Def:Qvexpl_lin}:}\\
		Find $Q$ \blue{such that} $E_1$ of 	$QE=\left[ {\begin{smallmatrix}
				E_1\\
				0
		\end{smallmatrix}} \right]$ is of full row rank, denote $QH=\left[ {\begin{smallmatrix}
				H_1\\
				H_2
		\end{smallmatrix}} \right]$, $QL=\left[ {\begin{smallmatrix}
				L_1\\
				L_2
		\end{smallmatrix}} \right]$;\\ 
		Set	$ A=E^{\dagger}_1H_1$, $B^u=E^{\dagger}_1L_1$, $C=H_2$,   $D^u=L_2$ and find $B^v$ such that  ${\rm Im\,}B^v=\ker E_1=\ker E$;\\
		Set $\Lambda^{uv}=(A,B^u,B^v,C,D^v)$, \blue{then} we have $\Lambda^{uv}\in \mathbf{Expl}(\Delta^u)$.
		\Stepb{Find  $EM_{tran}$ \blue{such that} $\tilde \Lambda^{\tilde u \tilde v}=EM_{tran}(\Lambda^{uv})$ is in the \textbf{EMTF} by Theorem \ref{Thm:EMTF}:}\\
		Calculate the subspaces $\mathcal V^*$, $\mathcal U_u^*$, $\mathcal W^*$, $\mathcal Y^*$ for $\Lambda^{w}=\Lambda^{uv}$ by Lemma \ref{Lem:invarsubspaceseq};\\
		Construct $T_s$, $T_o$ by (\ref{Eq:mtftm}) and $T_w$ by (\ref{Eq:emtftm});\\
		Find $K_{MT}=T^{-1}_sKT_o$ and $F_{MT}=T^{-1}_iFT_s$ by (\ref{Eq:F12}) and (\ref{Eq:K12});\\Set  $T_x=T_s$,  $T_y=T_o$,  $F_w=F_{MT}$, $K_w=K_{MT}$ and
		$M_{trans}=(T_x,T_w,T_y,F_{w},K_{w})$, \blue{then} we have $\tilde \Lambda^{\tilde w}=M_{trans}(\Lambda^{w})$ is in the \textbf{MTF}, i.e., $\exists$ $EM_{tran}$: $\tilde \Lambda^{\tilde u\tilde v}=EM_{tran}(\Lambda^{uv})$ is in the \textbf{EMTF}.
		\Stepc{Find   $EM_{tran}$ \blue{such that} $\bar \Lambda^{\bar u\bar v}=EM_{tran}(\tilde \Lambda^{\tilde u \tilde v})$ is in the \textbf{EMNF} by Theorem \ref{Thm:EMNF}:}\\
		Construct $F_{MN}$, $K_{MN}$, $T_{MN}$ for $\tilde \lambda^{\tilde w}$ by the \textbf{MNF Algorithm \ref{Alg:1}}.\\
		Set $M_{tran}=(T_{MN},I_u,I_y,F_{MN},K_{MN})$, \blue{then}  we have $\bar \Lambda^{\bar w}=M_{tran}(\tilde \Lambda^{\tilde w})$ is in the \textbf{MNF}, i.e., $\exists$~$EM_{tran}$ \blue{such that} $\bar \Lambda^{\bar u\bar v}=EM_{tran}(\tilde \Lambda^{\tilde u\tilde v})$ is in the \textbf{EMNF}. 
		\Stepd{By the procedure shown in the proof of Theorem \ref{Thm:EMCF}, bring $\bar \Lambda^{\bar u\bar v}$ into the \textbf{EMCF}.}
		\Stepe{By  Definition \ref{Def:red_impl}, find the implicitation of the $v$-reduction  of $\bar \Lambda^{\bar u\bar v}$, denoted by $\bar \Delta^{\bar u}$.}
		\Result{ $\bar \Delta^{\bar u}$ is {in} the \textbf{FBCF} and $\Delta^u\mathop  \sim \limits^{ex-fb}\bar \Delta^{\bar u}$.} 
	\end{algorithmic}
\end{algorithm}
 \end{small}

\section{Example}\label{sec:5}
In this section, we {illustrate} the construction {of} Algorithm \ref{Alg:2} by an example taken from \cite{Berger2012}. Consider the following mathematical model of an electrical circuit (see Fig. 1.1 of \cite{Berger2012}), which is a DACS of the form $E\dot x=Hx+Lu$:
\begin{align*}
\left[  {\transparent{0.4}{
		\begin{smallmatrix}
		0&0&0&0&{\tr L}&0&0&0&0&0&0&0&0&0\\
		0&0&\tr -Ca&\tr Ca&0&0&0&0&0&0&0&0&0&0\\
		0&0&0&0&0&0&0&0&0&0&0&0&0&0\\
		0&0&0&0&0&0&0&0&0&0&0&0&0&0\\
		0&0&0&0&0&0&0&0&0&0&0&0&0&0\\
		0&0&0&0&0&0&0&0&0&0&0&0&0&0\\
		0&0&0&0&0&0&0&0&0&0&0&0&0&0\\
		0&0&0&0&0&0&0&0&0&0&0&0&0&0\\
		0&0&0&0&0&0&0&0&0&0&0&0&0&0\\
		0&0&0&0&0&0&0&0&0&0&0&0&0&0\\
		0&0&0&0&0&0&0&0&0&0&0&0&0&0\\
		0&0&0&0&0&0&0&0&0&0&0&0&0&0\\
		0&0&0&0&0&0&0&0&0&0&0&0&0&0\\
		\end{smallmatrix}}}\right]\dot x= \left[ {\transparent{0.4}\begin{smallmatrix}
	\tr 1&0&0&0&0&0&0&0&0&0&0&0&0&0\\
	0&0&0&0&0&0&0&0&0&0&0&0&\tr 1&0\\
	0&\tr -1&0&0&0&0&0&\tr R_G&0&0&0&0&0&0\\
	0&\tr 1&\tr -1&0&0&0&0&0&\tr R_F&0&0&0&0&0\\
	0&0&0&\tr -1&0&0&0&0&0&\tr R&0&0&0&0\\
	0&0&0&0&0&\tr 1&0&0&0&0&0&0&0&0\\
	0&0&0&0&0&0&\tr 1&0&0&0&0&0&0&0\\
	\tr 1&\tr -1&0&0&0&0&0&0&0&0&0&0&0&0\\
	0&0&0&0&\tr -1&\tr -1&0&0&0&0&0&0&0&0\\
	0&0&0&0&0&0&\tr 1&\tr 1&\tr -1&0&0&0&0&0\\
	0&0&0&0&0&0&0&0&\tr -1&0&\tr 1&\tr -1&\tr 1&0\\
	0&0&0&0&0&0&0&0&0&\tr 1&0&0&\tr 1&\tr 1\\
	0&0&\tr -1&0&0&0&0&0&0&0&0&0&0&0\\
	\end{smallmatrix}}\right]x+\left[ {\transparent{0.4} \begin{smallmatrix}
	0&0\\
	0&0\\
	0&0\\
	0&0\\
	0&0\\
	0&0\\
	0&0\\
	0&0\\
	\tr 1&0\\
	0&0\\
	0&0\\
	0&0\\
	0&\tr 1\\
	\end{smallmatrix}}\right]\left[ \begin{smallmatrix}
I\\V
\end{smallmatrix} \right],
\end{align*}
where $u=[I,V]^T$ is the control vector, $L,Ca,R,R_G,R_F$ are real scalars ({all assumed to be nonzero}). In \cite{Berger2012}, {only the matrix pencil $sE-H$ is transformed into a quasi-Kronecker form. Below, we will transform \footnote{The calculations
		of the invariant subspaces and the transformation matrices in the example are implemented by Matlab  and the source code is available on the webpage of the first
		author.} the whole DACS into its \textbf{FBCF}  via Algorithm \ref{Alg:2}. 

Step 1: Find an ODECS $\Lambda^{uv}\in \mathbf{Expl}(\Delta^u)$, which {we take as}
$$
\Lambda^{uv}:\left\lbrace \begin{array}{l@{\ }l@{\ }}
\dot x&=\left[ {\transparent{0.4}\begin{smallmatrix}
	0&0&0&0&0&0&0&0&0&0&0&0&0&0\\
	0&0&0&0&0&0&0&0&0&0&0&0&0&0\\
	0&0&0&0&0&0&0&0&0&0&0&0&0&0\\
	0&0&0&0&0&0&0&0&0&0&0&0&{\tr 1/Ca}&0\\
	\tr 1/L&0&0&0&0&0&0&0&0&0&0&0&0&0\\
	0&0&0&0&0&0&0&0&0&0&0&0&0&0\\
	0&0&0&0&0&0&0&0&0&0&0&0&0&0\\
	0&0&0&0&0&0&0&0&0&0&0&0&0&0\\
	0&0&0&0&0&0&0&0&0&0&0&0&0&0\\
	0&0&0&0&0&0&0&0&0&0&0&0&0&0\\
	0&0&0&0&0&0&0&0&0&0&0&0&0&0\\
	0&0&0&0&0&0&0&0&0&0&0&0&0&0\\
	0&0&0&0&0&0&0&0&0&0&0&0&0&0\\
	0&0&0&0&0&0&0&0&0&0&0&0&0&0
	\end{smallmatrix}}\right] x+\left[ {\transparent{0.4}\begin{smallmatrix}
	0&0\\
	0&0\\
	0&0\\
	0&0\\
	0&0\\
	0&0\\
	0&0\\
	0&0\\
	0&0\\
	0&0\\
	0&0\\
	0&0\\
	0&0\\
	0&0
	\end{smallmatrix}}\right]u+\left[ {\transparent{0.4}\begin{smallmatrix}
	\tr 1&0&0&0&0&0&0&0&0&0&0&0\\
	0&\tr 1&0&0&0&0&0&0&0&0&0&0\\
	0&0&\tr 1&0&0&0&0&0&0&0&0&0\\
	0&0&\tr 1&0&0&0&0&0&0&0&0&0\\
	0&0&0&0&0&0&0&0&0&0&0&0\\
	0&0&0&\tr 1&0&0&0&0&0&0&0&0\\
	0&0&0&0&\tr 1&0&0&0&0&0&0&0\\
	0&0&0&0&0&\tr 1&0&0&0&0&0&0\\
	0&0&0&0&0&0&\tr 1&0&0&0&0&0\\
	0&0&0&0&0&0&0&\tr 1&0&0&0&0\\
	0&0&0&0&0&0&0&0&\tr 1&0&0&0\\
	0&0&0&0&0&0&0&0&0&\tr 1&0&0\\
	0&0&0&0&0&0&0&0&0&0&\tr 1&0\\
	0&0&0&0&0&0&0&0&0&0&0&\tr 1
	\end{smallmatrix}}\right]v\\    \\
y&=\left[ {\transparent{0.4}\begin{smallmatrix}
	0&\tr -1&0&0&0&0&0&\tr R_G&0&0&0&0&0&0\\
	0&\tr 1&\tr -1&0&0&0&0&0&\tr R_F&0&0&0&0&0\\
	0&0&0&\tr -1&0&0&0&0&0&\tr R&0&0&0&0\\
	0&0&0&0&0&\tr 1&0&0&0&0&0&0&0&0\\
	0&0&0&0&0&0&\tr 1&0&0&0&0&0&0&0\\
	\tr 1&\tr -1&0&0&0&0&0&0&0&0&0&0&0&0\\
	0&0&0&0&\tr -1&\tr -1&0&0&0&0&0&0&0&0\\
	0&0&0&0&0&0&\tr 1&\tr 1&\tr -1&0&0&0&0&0\\
	0&0&0&0&0&0&0&0&\tr -1&0&\tr 1&\tr -1&\tr 1&0\\
	0&0&0&0&0&0&0&0&0&\tr 1&0&0&\tr 1&\tr 1\\
	0&0&\tr -1&0&0&0&0&0&0&0&0&0&0&0\\
	\end{smallmatrix}}\right]x+\left[ {\transparent{0.4} \begin{smallmatrix}
	0&0\\
	0&0\\
	0&0\\
	0&0\\
	0&0\\
	0&0\\
	\tr 1&0\\
	0&0\\
	0&0\\
	0&0\\
	0&\tr 1\\
	\end{smallmatrix}}\right]u.
\end{array}\right. 
$$
Step 2: Calculate the subspaces  $\mathcal V^*$, $\mathcal U_w^*$, $ \mathcal U_v^*$, $\mathcal W^*$,  $\mathcal Y^*$ of  $\Lambda^{w}=(A,B^w,C,D^w)$ by Lemma \ref{Lem:invarsubspaceseq} of {the} Appendix. They are $\mathcal W^*=\mathscr X=\mathbb R^{14}$, $\mathcal Y^*=\mathscr Y=\mathbb R^{11}$ and
$$
\mathcal V^*\!=\!{\rm Im\,}\left[ {\transparent{0.4} \begin{smallmatrix}
	\tr R_G&    0& 0&  0&  0\\
	\tr R_G&    0& 0&  0&  0\\
	\tr R_F + R_G&    0& 0&  0&  0\\
	0&   \tr R& 0&  0&  0\\
	0&    0& \tr 1&  0&  0\\
	0&    0& 0&  0&  0\\
	0&    0& 0&  0&  0\\
	\tr 1&    0& 0&  0&  0\\
	\tr 1&    0& 0&  0&  0\\
	0& \tr 1& 0&  0&  0\\
	0&    0& 0& \tr  1&  0\\
	0&    0& 0&  0&  \tr 1\\
	\tr 1&    0& 0& \tr -1&  \tr 1\\
	\tr -1& \tr -1& 0& \tr  1&\tr -1
	\end{smallmatrix}}\right], \ \mathcal U_w^*\!=\!{\rm Im\,}\left[ {\transparent{0.4} \begin{smallmatrix}
	0&  0&  0\\
	0&  0&  0\\
	\tr R*R_G&  0&  0\\
	\tr R*R_G&  0&  0\\
	\tr R*( R_F + R_G)&  0&  0\\
	0&  0&  0\\
	0&  0&  0\\
	\tr R&  0&  0\\
	\tr R&  0&  0\\
	\tr R_F + R_G&  0&  0\\
	0&  \tr 1&  0\\
	0&  0&  \tr 1\\
	\tr R& \tr -1& \tr 1\\
	\tr-(R + R_F + R_G)& \tr 1& \tr -1
	\end{smallmatrix}}\right], \  \mathcal U^*_v\!=\!{\rm Im\,}\left[ {\transparent{0.4} \begin{smallmatrix}
	\tr R*R_G&  0&  0\\
	\tr R*R_G&  0&  0\\
	\tr R*(R_F + R_G)&  0&  0\\
	0&  0&  0\\
	0&  0&  0\\
	\tr R&  0&  0\\
	\tr R&  0&  0\\
	\tr R_F + R_G&  0&  0\\
	0&  \tr 1&  0\\
	0&  0&  \tr 1\\
	\tr R& \tr -1& \tr 1\\
	\tr-(R + R_F + R_G)& \tr 1& \tr -1
	\end{smallmatrix}}\right].
$$
By the proof of Theorem \ref{Thm:EMTF} and Proposition \ref{Pro:MTF}, we can choose the following transformation matrices: $T_y=I_{11}$, $K_{MT}=0_{14\times 11}$,
$$
T_s\!=\!\left[ {\transparent{0.4} \begin{smallmatrix}  
	\tr 1& 0& 0& 0& 0& 0& 0& 0& 0& 0&  0&  0& 0& 0\\
	0& 0& 0& \tr 1& 0& 0& 0& 0& 0& 0&  0&  0& 0& 0\\
	0& 0& 0& 0& \tr 1& 0& 0& 0& 0& 0&  0&  0& 0& 0\\
	0& 0& 0& 0& 0& 0& 0& 0& 0& 0&  \tr 1&  0& 0& 0\\
	0& 0& 0& 0& 0& 0& 0& 0& 0& 0&  0&  \tr 1& 0& 0\\
	\tr -1& \tr 1& 0&0& 0& 0& 0& 0& 0& 0&  0&  0& 0& 0\\
	\tr -\frac{R_F + R_G}{R_G} & 0& \tr 1& 0& 0& 0& 0& 0& 0& 0& 0& 0& 0& 0\\
	0& 0& 0& 0& 0& 1& 0& 0& 0& 0&  0&  0& 0& 0\\
	0& 0& 0& 0& 0& 0& 1& 0& 0& 0&  0&  0& 0& 0\\
	\tr -1/R_G& 0& 0& 0& 0& 0& 0& \tr 1& 0& 0&  0&  0& 0& 0\\
	\tr -1/R_G& 0& 0& 0& 0& 0& 0& 0& \tr 1& 0&  0&  0& 0& 0\\
	0& 0& 0& \tr \frac{-1}{R}& 0& 0& 0& 0& 0& \tr 1&  0&  0& 0& 0\\
	\tr -1/R_G& 0& 0& 0& 0& 0& 0& 0& 0& 0& \tr 1& \tr -1&\tr  1& 0\\
	\tr 1/R_G& 0& 0& \tr \frac{-1}{R}& 0& 0& 0& 0& 0& 0& \tr -1&  \tr 1& 0& \tr 1	\end{smallmatrix}}\right], \ T_w\!=\!\left[ {\transparent{0.4} \begin{smallmatrix} 
	\tr	1& 0& 0& 0& 0& 0& 0& 0& 0& 0&  0&  0& 0& 0\\
	0& \tr 1& 0& 0& 0& 0& 0& 0& 0& 0&  0&  0& 0& 0\\
	0& 0& \tr 1& 0& 0& 0& 0& 0& 0& 0&  0&  0& 0& 0\\
	0& 0&0& 0& 0& 0& 0& 0& 0& 0& \tr 1&  0& 0& 0\\
	0& 0& 0& 0& 0& 0& 0& 0& 0& 0&  0& \tr 1& 0& 0\\
	0& 0& \tr -1& \tr 1& 0& 0& 0& 0& 0& 0&  0&  0& 0& 0\\
	0& 0& \tr -(R_F + R_G)/R_G& 0& \tr 1& 0& 0& 0& 0& 0&  0&  0& 0& 0\\
	0& 0& 0& 0& 0&\tr 1& 0& 0& 0& 0&  0&  0& 0& 0\\
	0& 0& 0& 0& 0& 0& \tr 1& 0& 0& 0&  0&  0& 0& 0\\
	0& 0& \tr -1/R_G& 0& 0& 0& 0& \tr 1& 0& 0&  0&  0& 0& 0\\
	0& 0& \tr -1/R_G& 0& 0& 0& 0& 0& \tr 1& 0&  0&  0& 0& 0\\
	0& 0&\tr  -(R_F + R_G)/R*R_G& 0& 0& 0& 0& 0& 0&\tr 1&  0&  0& 0& 0\\
	0& 0& \tr -1/R_G& 0& 0& 0& 0& 0& 0& 0&  \tr 1&\tr -1& \tr 1& 0\\
	0& 0&\tr (R + R_F + R_G)/(R*R_G)& 0& 0& 0& 0& 0& 0& 0&\tr -1&  \tr 1& 0& \tr 1 \end{smallmatrix}}\right],
$$
\begin{equation*}
F_{MT}= \left[ {\transparent{0.4} \begin{smallmatrix}            0& 0& 0& 0& \tr 1& 0& 0& 0& 0& 0&0&0& 0& 0\\
	\tr (R_F + R_G)/R_G& 0& 0& 0& 0& 0& 0& 0& 0& 0&0&0& 0& 0\\
	0& 0& 0& 0& 0& 0& 0& 0& 0& 0&0&0& 0& 0\\
	0& 0& 0& 0& 0& 0& 0& 0& 0& 0&0&0& 0& 0\\
	0& 0& 0& 0& 0& 0& 0& 0& 0& 0&0&0& 0& 0\\
	0& 0& 0& 0& 0& 0& 0& 0& 0& 0& 0&0& 0& 0\\
	0& 0& 0& 0& 0& 0& 0& 0& 0& 0&0&0& 0& 0\\
	0& 0& 0& 0& 0& 0& 0& 0& 0& 0&0& 0& 0& 0\\
	0& 0& 0& 0& 0& 0& 0& 0& 0& 0& 0&0& 0& 0\\
	\tr 1/(Ca*R*R_G)& 0& 0& 0& 0& 0& 0& 0& 0& 0& \tr -1/(Ca*R)& \tr 1/(Ca*R)& 0& 0\\
	0& 0& 0& 0& 0& 0& 0& 0& 0& 0&0&0& 0& 0\\
	0& 0& 0& 0& 0& 0& 0& 0& 0& 0&0&0& 0& 0\\
	0& 0& 0& 0& 0& 0& 0& 0& 0& 0& 0& 0& 0& 0\\
	\tr -1/(Ca*R*R_G)& 0& 0& 0& 0& 0& 0& 0& 0& 0& \tr 1/(Ca*R)&\tr -1/(Ca*R)& 0& 0
	\end{smallmatrix}}\right].
\end{equation*}
Then the Morse transformation $M_{trans}(T_s,T_w,T_y,F_{MT},K_{MT})$ {brings} $\Lambda^w$ {into}  $\tilde \Lambda^{\tilde w}=(\tilde A,\tilde B^{\tilde w},\tilde C,\tilde D^{\tilde w})$, which is in the \textbf{EMTF}, where  
\begin{footnotesize}
	\begin{align*}
&\left[ \begin{smallmatrix}
	\tilde A&\vline&\tilde B^{\tilde w}\\
	\hline
	\tilde C&\vline&\tilde D^{\tilde w}
\end{smallmatrix}\right]=\left[ \begin{smallmatrix}
	\tilde A&\vline&\tilde B^{\tilde u}&\vline&\tilde B^{\tilde v}\\
	\hline
	\tilde C&\vline&\tilde D^{\tilde u}&\vline&0
\end{smallmatrix}\right]=\left[ \begin{smallmatrix}
	\tilde A_{1}&\tilde A_{13}&\vline&0&\vline&\tilde B_{1}^{\tilde v}&\tilde B_{12}^{\tilde v}\\
	0&\tilde A_{3}&\vline&0&\vline&0&\tilde B_{3}^{\tilde v}\\
	\hline
	0&\tilde C_{3}&\vline&\tilde D_{3}^{\tilde u}&\vline&0&0
\end{smallmatrix}\right]=\\&	\left[ {\transparent{0.4} \begin{smallmatrix} 
		\tr 0& \tr 0&\tr  0&\tr  0&\tr 0& 0& 0& 0& 0& 0& 0& 0&0& 0	&\vline& 0& 0&\vline&\tr 1& \tr 0&\tr 0& 0&0& 0& 0& 0& 0& 0& 0& 0\\
		\tr 1/(Ca*R_G)& \tr 0& \tr 0& \tr -1/Ca&\tr 1/Ca& 0& 0& 0& 0& 0& 0& 0&  \tr 1/Ca& 0&\vline& 0& 0&\vline&\tr (R_F + R_G)/R_G& \tr 0&\tr 0& 0& \tr1& 0& 0& 0& 0& 0& 0& 0\\
		\tr 1/L&\tr  0& \tr 0&\tr 0& \tr 0& 0& 0& 0& 0& 0& 0& 0&0& 0&\vline& 0& 0&\vline& \tr 0&\tr 0&\tr 0& 0&0& 0& 0& 0& 0& 0& 0& 0\\
		\tr 0& \tr 0& \tr 0&\tr 0&\tr 0& 0& 0& 0& 0& 0& 0& 0&0& 0&\vline&0& 0&\vline& \tr 0& \tr 1&\tr 0& 0&0& 0& 0& 0& 0& 0& 0& 0\\
		\tr 0& \tr 0& \tr 0&\tr 0&\tr 0& 0& 0& 0& 0& 0& 0& 0&0& 0&\vline&0& 0&\vline&\tr 0& \tr 0& \tr 1& 0& 0& 0& 0& 0& 0& 0& 0& 0\\
		0& 0& 0&0&0& \tr 0& \tr 0&\tr  0& \tr 0& \tr 0& \tr 0& \tr 0&\tr 0& \tr 0&\vline&0& 0&\vline&0& 0& 0& \tr 1&\tr 0&\tr  0&\tr  0& \tr 0& \tr  0& \tr 0& \tr  0&\tr  0\\
		0& 0& 0&0&0& \tr 0& \tr 0& \tr 0& \tr 0& \tr 0& \tr 0& \tr 0&\tr 0& \tr 0&\vline&0& 0&\vline&0& 0& 0& \tr  0&\tr 1& \tr  0& \tr  0& \tr 0& \tr 0& \tr 0& \tr 0&\tr  0\\
		0& 0& 0&0&0& \tr 0& \tr 0& \tr 0& \tr 0& \tr 0& \tr 0& \tr 0&\tr 0& \tr 0&\vline&0& 0&\vline&0& 0& 0& \tr 0&\tr 0& \tr 1& \tr 0& \tr 0& \tr 0& \tr 0& \tr 0& \tr 0\\
		0& 0& 0&0&0&\tr 0& \tr 0& \tr 0& \tr 0&\tr 0&\tr 0& \tr 0&\tr 0& \tr 0& \vline&0& 0&\vline&0& 0& 0& \tr 0&\tr 0& \tr 0& \tr 1& \tr 0& \tr 0&\tr  0& \tr 0&\tr  0\\
		0& 0& 0&0&0& \tr 0& \tr 0& \tr 0& \tr 0&\tr  0& \tr 0&\tr 0&\tr 0& \tr 0&\vline&0& 0&\vline&0& 0& 0& \tr  0& \tr  0&\tr  0&\tr  0& \tr  1& \tr  0& \tr  0& \tr  0&\tr  0\\
		0& 0& 0& 0& 0& \tr 0& \tr 0& \tr 0&\tr 0& \tr 0&\tr 0& \tr 0&\tr 0& \tr 0&\vline&0& 0&\vline&0& 0& 0& \tr  0&\tr 0&\tr  0& \tr 0&\tr  0& \tr 1& \tr 0& \tr 0&\tr  0\\
		0& 0& 0& 0& 0& \tr 0& \tr 0& \tr 0& \tr 0& \tr 0& \tr 0&\tr  0& \tr -1/(Ca*R)& \tr 0&\vline&0& 0&\vline&0& 0& 0&\tr  0& \tr -1/R& \tr 0& \tr 0& \tr 0& \tr 0& \tr 1& \tr 0& \tr 0\\
		0& 0& 0&0&0&\tr 0& \tr 0& \tr 0& \tr 0& \tr 0& \tr 0& \tr 0&\tr 0& \tr 0&\vline&0& 0&\vline&0& 0& 0& \tr 0&  \tr 0& \tr 0& \tr 0& \tr 0& \tr 0& \tr 0& \tr 1& \tr 0\\
		0& 0& 0& 0&  0& \tr 0& \tr 0& \tr 0& \tr 0& \tr 0& \tr 0& \tr 0&  \tr 1/(Ca*R)& \tr 0&\vline& 0& 0&\vline& 0& 0& 0& \tr 0&  \tr 1/R& \tr 0& \tr 0& \tr 0& \tr 0& \tr 0& \tr 0& \tr 1\\
		\hline\\
		0& 0& 0& 0& 0& \tr -1& \tr  0& \tr  0& \tr 0& \tr R_G& \tr  0&\tr  0&\tr  0& \tr 0&\vline&0& 0&\vline& 0& 0& 0& 0& 0& 0& 0& 0& 0& 0& 0& 0\\
		0& 0& 0& 0& 0& \tr  1& \tr -1&  \tr 0& \tr 0&  \tr 0& \tr R_F& \tr 0& \tr 0& \tr 0&\vline&0& 0&\vline& 0& 0& 0& 0& 0& 0& 0& 0& 0& 0& 0& 0\\
		0& 0& 0& 0& 0&  \tr 0&  \tr 0&  \tr 0& \tr 0&  \tr 0&  \tr 0& \tr R& \tr 0& \tr 0&\vline&0& 0&\vline& 0& 0& 0& 0& 0& 0& 0& 0& 0& 0& 0& 0\\
		0& 0& 0& 0& 0&  \tr 0&  \tr 0&  \tr 1& \tr 0&  \tr 0&  \tr 0& \tr 0& \tr 0& \tr 0&\vline&0& 0&\vline& 0& 0& 0& 0& 0& 0& 0& 0& 0& 0& 0& 0\\
		0& 0& 0& 0& 0&  \tr 0&  \tr 0&  \tr 0& \tr 1&  \tr 0&  \tr 0& \tr 0& \tr 0& \tr 0&\vline& 0& 0&\vline& 0& 0& 0& 0& 0& 0& 0& 0& 0& 0& 0& 0\\
		0& 0& 0& 0& 0& \tr -1&  \tr 0&  \tr 0& \tr 0&  \tr 0&  \tr 0& \tr 0& \tr 0& \tr 0&\vline&0& 0&\vline& 0& 0& 0& 0& 0& 0& 0& 0& 0& 0& 0& 0\\
		0& 0& 0& 0& 0&  \tr 0&  \tr 0& \tr -1& \tr 0&  \tr 0&  \tr 0& \tr 0& \tr 0& \tr 0&\vline& \tr 1& 0&\vline& 0& 0& 0& 0& 0& 0& 0& 0& 0& 0& 0& 0\\
		0& 0& 0& 0& 0&  \tr 0&  \tr 0&  \tr 0& \tr 1&  \tr 1& \tr -1& \tr 0& \tr 0& \tr 0&\vline&0& 0&\vline& 0& 0& 0& 0& 0& 0& 0& 0& 0& 0& 0& 0\\
		0& 0& 0& 0& 0&  \tr 0&  \tr 0&  \tr 0& \tr 0&  \tr 0& \tr -1& \tr 0& \tr 1& \tr 0&\vline&0& 0&\vline& 0& 0& 0& 0& 0& 0& 0& 0& 0& 0& 0& 0\\
		0& 0& 0& 0& 0&  \tr 0&  \tr 0&  \tr 0& \tr 0& \tr  0&  \tr 0& \tr 1& \tr 1& \tr 1&\vline&0& 0&\vline& 0& 0& 0& 0& 0& 0& 0& 0& 0& 0& 0& 0\\
		0& 0& 0& 0& 0&  \tr 0& \tr -1&  \tr 0& \tr 0&  \tr 0&  \tr 0& \tr 0& \tr 0& \tr 0&\vline&0& \tr 1&\vline& 0& 0& 0& 0& 0& 0& 0& 0& 0& 0& 0& 0
		\end{smallmatrix}}\right].
	\end{align*}
\end{footnotesize}
Step 3: By \textbf{MNF} Algorithm \ref{Alg:1}, {set}
\begin{align*}
K_{MN}=\left[ {\transparent{0.4} \begin{smallmatrix}
	0& 0& 0& 0&     0&         0& 0&    0&     0& 0& 0\\
	\tr -1/(Ca*R_G)& 0& 0& 0& \tr -1/Ca&\tr 1/(Ca*R_G)& 0&\tr 1/Ca&\tr -1/Ca& 0& 0\\
	0& 0& 0& 0&     0&         0& 0&    0&     0& 0& 0\\
	0& 0& 0& 0&     0&         0& 0&    0&     0& 0& 0\\
	0& 0& 0& 0&     0&         0& 0&    0&     0& 0& 0\\
	0& 0& 0& 0&     0&         0& 0&    0&     0& 0& 0\\
	0& 0& 0& 0&     0&         0& 0&    0&     0& 0& 0\\
	0& 0& 0& 0&     0&         0& 0&    0&     0& 0& 0\\
	0& 0& 0& 0&     0&         0& 0&    0&     0& 0& 0\\
	0& 0& 0& 0&     0&         0& 0&    0&     0& 0& 0\\
	0& 0& 0& 0&     0&         0& 0&    0&     0& 0& 0\\
	0& 0& 0& 0&     0&         0& 0&    0&     0& 0& 0\\
	0& 0& 0& 0&     0&         0& 0&    0&     0& 0& 0\\
	0& 0& 0& 0&     0&         0& 0&    0&     0& 0& 0
	\end{smallmatrix}}\right],\ \ F_{MN}=0_{14\times 14}.
\end{align*}
Then \blue{find} $T^2_{MN}$ via the following constrained Sylvester equation,  
\begin{align*}
\bar A_{1}{T^2_{MN}} - {T^2_{MN}}\bar A_{3} =  - \bar A_{1},\ \ \ T^2_{MN}\bar B^{\bar w}_{3}=-\bar B^{\bar w}_{12},
\end{align*}
where $\bar A=\tilde A+K_{MN}\tilde C$, $\bar B^{\bar w}=\tilde B^{\tilde w}+K_{MN}\tilde D^{\tilde w}$. The above {equation} is solvable and {the} solution is
\begin{align*}
T^2_{MN} =\left[ {\transparent{0.4} \begin{smallmatrix}
	0& 0& 0& 0& 0& 0& 0& 0& 0\\
	0& \tr -1& 0& 0& 0& 0& 0& 0& 0\\
	0& 0& 0& 0& 0& 0& 0& 0& 0\\
	0& 0& 0& 0& 0& 0& 0& 0& 0\\
	0& 0& 0& 0& 0& 0& 0& 0& 0
	\end{smallmatrix}}\right].
\end{align*}
Thus the Morse transformation $M_{tran}=(T_{MN},I_{14},I_{11},F_{MN},K_{MN})$, where $T_{MN}=\left[ \begin{smallmatrix}
I&T^2_{MN}\\0&I
\end{smallmatrix}\right] $, {brings} $\tilde \Lambda^{\tilde w}$ {into} $\bar \Lambda^{\bar w}=(\bar A,\bar B^{\bar w},\bar C,\bar D^{\bar w})$, which is in the \textbf{EMNF}, where 	
\begin{footnotesize}
	\begin{align*}
&\left[ \begin{smallmatrix}
	\bar A&\vline&\bar B^{\bar w}\\
	\hline
	\bar C&\vline&\bar D^{\bar w}
\end{smallmatrix}\right]=\left[ \begin{smallmatrix}
	\bar A&\vline&\bar B^{\bar u}&\vline&\bar B^{\bar v}\\
	\hline
	\bar C&\vline&\bar D^{\bar u}&\vline&0
\end{smallmatrix}\right]=\left[ \begin{smallmatrix}
	\bar A_{11}&0&\vline&0&\vline&\bar B_{11}^{\bar v}&0\\
	0&\bar A_{33}&\vline&0&\vline&0&\bar B_{32}^{\bar v}\\
	\hline
	0&\bar C_{13}&\vline&\bar D_{12}^{\bar u}&\vline&0&0
\end{smallmatrix}\right]=\\
&\left[ {\transparent{0.4} \begin{smallmatrix} 
		\tr 0& \tr 0&\tr  0&\tr  0&\tr 0& 0& 0& 0& 0& 0& 0& 0&0& 0	&\vline& 0& 0&\vline&\tr 1& \tr 0&\tr 0& 0&0& 0& 0& 0& 0& 0& 0& 0\\
		\tr 1/(Ca*R_G)& \tr 0& \tr 0& \tr -1/Ca&\tr 1/Ca& 0& 0& 0& 0& 0& 0& 0&  0& 0&\vline& 0& 0&\vline&\tr (R_F + R_G)/R_G& \tr 0&\tr 0& 0& 0& 0& 0& 0& 0& 0& 0& 0\\
		\tr 1/L&\tr  0& \tr 0&\tr 0& \tr 0& 0& 0& 0& 0& 0& 0& 0&0& 0&\vline& 0& 0&\vline& \tr 0&\tr 0&\tr 0& 0&0& 0& 0& 0& 0& 0& 0& 0\\
		\tr 0& \tr 0& \tr 0&\tr 0&\tr 0& 0& 0& 0& 0& 0& 0& 0&0& 0&\vline&0& 0&\vline& \tr 0& \tr 1&\tr 0& 0&0& 0& 0& 0& 0& 0& 0& 0\\
		\tr 0& \tr 0& \tr 0&\tr 0&\tr 0& 0& 0& 0& 0& 0& 0& 0&0& 0&\vline&0& 0&\vline&\tr 0& \tr 0& \tr 1& 0& 0& 0& 0& 0& 0& 0& 0& 0\\
		0& 0& 0&0&0& \tr 0& \tr 0&\tr  0& \tr 0& \tr 0& \tr 0& \tr 0&\tr 0& \tr 0&\vline&0& 0&\vline&0& 0& 0& \tr 1&\tr 0&\tr  0&\tr  0& \tr 0& \tr  0& \tr 0& \tr  0&\tr  0\\
		0& 0& 0&0&0& \tr 0& \tr 0& \tr 0& \tr 0& \tr 0& \tr 0& \tr 0&\tr 0& \tr 0&\vline&0& 0&\vline&0& 0& 0& \tr  0&\tr 1& \tr  0& \tr  0& \tr 0& \tr 0& \tr 0& \tr 0&\tr  0\\
		0& 0& 0&0&0& \tr 0& \tr 0& \tr 0& \tr 0& \tr 0& \tr 0& \tr 0&\tr 0& \tr 0&\vline&0& 0&\vline&0& 0& 0& \tr 0&\tr 0& \tr 1& \tr 0& \tr 0& \tr 0& \tr 0& \tr 0& \tr 0\\
		0& 0& 0&0&0&\tr 0& \tr 0& \tr 0& \tr 0&\tr 0&\tr 0& \tr 0&\tr 0& \tr 0& \vline&0& 0&\vline&0& 0& 0& \tr 0&\tr 0& \tr 0& \tr 1& \tr 0& \tr 0&\tr  0& \tr 0&\tr  0\\
		0& 0& 0&0&0& \tr 0& \tr 0& \tr 0& \tr 0&\tr  0& \tr 0&\tr 0&\tr 0& \tr 0&\vline&0& 0&\vline&0& 0& 0& \tr  0& \tr  0&\tr  0&\tr  0& \tr  1& \tr  0& \tr  0& \tr  0&\tr  0\\
		0& 0& 0& 0& 0& \tr 0& \tr 0& \tr 0&\tr 0& \tr 0&\tr 0& \tr 0&\tr 0& \tr 0&\vline&0& 0&\vline&0& 0& 0& \tr  0&\tr 0&\tr  0& \tr 0&\tr  0& \tr 1& \tr 0& \tr 0&\tr  0\\
		0& 0& 0& 0& 0& \tr 0& \tr 0& \tr 0& \tr 0& \tr 0& \tr 0&\tr  0& \tr -1/(Ca*R)& \tr 0&\vline&0& 0&\vline&0& 0& 0&\tr  0& \tr -1/R& \tr 0& \tr 0& \tr 0& \tr 0& \tr 1& \tr 0& \tr 0\\
		0& 0& 0&0&0&\tr 0& \tr 0& \tr 0& \tr 0& \tr 0& \tr 0& \tr 0&\tr 0& \tr 0&\vline&0& 0&\vline&0& 0& 0& \tr 0&  \tr 0& \tr 0& \tr 0& \tr 0& \tr 0& \tr 0& \tr 1& \tr 0\\
		0& 0& 0& 0&  0& \tr 0& \tr 0& \tr 0& \tr 0& \tr 0& \tr 0& \tr 0&  \tr 1/(Ca*R)& \tr 0&\vline& 0& 0&\vline& 0& 0& 0& \tr 0&  \tr 1/R& \tr 0& \tr 0& \tr 0& \tr 0& \tr 0& \tr 0& \tr 1\\
		\hline\\
		0& 0& 0& 0& 0& \tr -1& \tr  0& \tr  0& \tr 0& \tr R_G& \tr  0&\tr  0&\tr  0& \tr 0&\vline&0& 0&\vline& 0& 0& 0& 0& 0& 0& 0& 0& 0& 0& 0& 0\\
		0& 0& 0& 0& 0& \tr  1& \tr -1&  \tr 0& \tr 0&  \tr 0& \tr R_F& \tr 0& \tr 0& \tr 0&\vline&0& 0&\vline& 0& 0& 0& 0& 0& 0& 0& 0& 0& 0& 0& 0\\
		0& 0& 0& 0& 0&  \tr 0&  \tr 0&  \tr 0& \tr 0&  \tr 0&  \tr 0& \tr R& \tr 0& \tr 0&\vline&0& 0&\vline& 0& 0& 0& 0& 0& 0& 0& 0& 0& 0& 0& 0\\
		0& 0& 0& 0& 0&  \tr 0&  \tr 0&  \tr 1& \tr 0&  \tr 0&  \tr 0& \tr 0& \tr 0& \tr 0&\vline&0& 0&\vline& 0& 0& 0& 0& 0& 0& 0& 0& 0& 0& 0& 0\\
		0& 0& 0& 0& 0&  \tr 0&  \tr 0&  \tr 0& \tr 1&  \tr 0&  \tr 0& \tr 0& \tr 0& \tr 0&\vline& 0& 0&\vline& 0& 0& 0& 0& 0& 0& 0& 0& 0& 0& 0& 0\\
		0& 0& 0& 0& 0& \tr -1&  \tr 0&  \tr 0& \tr 0&  \tr 0&  \tr 0& \tr 0& \tr 0& \tr 0&\vline&0& 0&\vline& 0& 0& 0& 0& 0& 0& 0& 0& 0& 0& 0& 0\\
		0& 0& 0& 0& 0&  \tr 0&  \tr 0& \tr -1& \tr 0&  \tr 0&  \tr 0& \tr 0& \tr 0& \tr 0&\vline& \tr 1& 0&\vline& 0& 0& 0& 0& 0& 0& 0& 0& 0& 0& 0& 0\\
		0& 0& 0& 0& 0&  \tr 0&  \tr 0&  \tr 0& \tr 1&  \tr 1& \tr -1& \tr 0& \tr 0& \tr 0&\vline&0& 0&\vline& 0& 0& 0& 0& 0& 0& 0& 0& 0& 0& 0& 0\\
		0& 0& 0& 0& 0&  \tr 0&  \tr 0&  \tr 0& \tr 0&  \tr 0& \tr -1& \tr 0& \tr 1& \tr 0&\vline&0& 0&\vline& 0& 0& 0& 0& 0& 0& 0& 0& 0& 0& 0& 0\\
		0& 0& 0& 0& 0&  \tr 0&  \tr 0&  \tr 0& \tr 0& \tr  0&  \tr 0& \tr 1& \tr 1& \tr 1&\vline&0& 0&\vline& 0& 0& 0& 0& 0& 0& 0& 0& 0& 0& 0& 0\\
		0& 0& 0& 0& 0&  \tr 0& \tr -1&  \tr 0& \tr 0&  \tr 0&  \tr 0& \tr 0& \tr 0& \tr 0&\vline&0& \tr 1&\vline& 0& 0& 0& 0& 0& 0& 0& 0& 0& 0& 0& 0
		\end{smallmatrix}}\right].
	\end{align*}
\end{footnotesize}

Step 4: Transform each subsystem of $\bar \Lambda^{\bar w}$ into its canonical form as in Theorem \ref{Thm:EMCF} {to obtain} 
\begin{align*}
\textbf{EMCF}: \left[ \begin{smallmatrix}
A^{cv}&0&\vline&0&\vline&B^{cv}&0\\
0&A^{pv}&\vline&0&\vline&0& B^{pv}\\
\hline
0&0&\vline&D^{pu}&\vline&0&0\\
0&\bar C^{pv}&\vline&0&\vline&0&0\\
\end{smallmatrix}\right]=
\left[ {\transparent{0.4} \begin{smallmatrix} 
	\tr 0& \tr 1&\tr  0&\tr  0&\tr 0& 0& 0& 0& 0& 0& 0& 0&0& 0	&\vline& 0& 0&\vline&\tr 0& \tr 0&\tr 0& 0&0& 0& 0& 0& 0& 0& 0& 0\\
	\tr 0& \tr 0& \tr 0& \tr 0&\tr 0& 0& 0& 0& 0& 0& 0& 0&  0& 0&\vline& 0& 0&\vline&\tr 1& \tr 0&\tr 0& 0& 0& 0& 0& 0& 0& 0& 0& 0\\
	\tr 0&\tr  0& \tr 0&\tr 1& \tr 0& 0& 0& 0& 0& 0& 0& 0&0& 0&\vline& 0& 0&\vline& \tr 0&\tr 0&\tr 0& 0&0& 0& 0& 0& 0& 0& 0& 0\\
	\tr 0& \tr 0& \tr 0&\tr 0&\tr 0& 0& 0& 0& 0& 0& 0& 0&0& 0&\vline&0& 0&\vline& \tr 0& \tr 1&\tr 0& 0&0& 0& 0& 0& 0& 0& 0& 0\\
	\tr 0& \tr 0& \tr 0&\tr 0&\tr 0& 0& 0& 0& 0& 0& 0& 0&0& 0&\vline&0& 0&\vline&\tr 0& \tr 0& \tr 1& 0& 0& 0& 0& 0& 0& 0& 0& 0\\
	0& 0& 0&0&0& \tr 0& \tr 0&\tr  0& \tr 0& \tr 0& \tr 0& \tr 0&\tr 0& \tr 0&\vline&0& 0&\vline&0& 0& 0& \tr 1&\tr 0&\tr  0&\tr  0& \tr 0& \tr  0& \tr 0& \tr  0&\tr  0\\
	0& 0& 0&0&0& \tr 0& \tr 0& \tr 0& \tr 0& \tr 0& \tr 0& \tr 0&\tr 0& \tr 0&\vline&0& 0&\vline&0& 0& 0& \tr  0&\tr 1& \tr  0& \tr  0& \tr 0& \tr 0& \tr 0& \tr 0&\tr  0\\
	0& 0& 0&0&0& \tr 0& \tr 0& \tr 0& \tr 0& \tr 0& \tr 0& \tr 0&\tr 0& \tr 0&\vline&0& 0&\vline&0& 0& 0& \tr 0&\tr 0& \tr 1& \tr 0& \tr 0& \tr 0& \tr 0& \tr 0& \tr 0\\
	0& 0& 0&0&0&\tr 0& \tr 0& \tr 0& \tr 0&\tr 0&\tr 0& \tr 0&\tr 0& \tr 0& \vline&0& 0&\vline&0& 0& 0& \tr 0&\tr 0& \tr 0& \tr 1& \tr 0& \tr 0&\tr  0& \tr 0&\tr  0\\
	0& 0& 0&0&0& \tr 0& \tr 0& \tr 0& \tr 0&\tr  0& \tr 0&\tr 0&\tr 0& \tr 0&\vline&0& 0&\vline&0& 0& 0& \tr  0& \tr  0&\tr  0&\tr  0& \tr  1& \tr  0& \tr  0& \tr  0&\tr  0\\
	0& 0& 0& 0& 0& \tr 0& \tr 0& \tr 0&\tr 0& \tr 0&\tr 0& \tr 0&\tr 0& \tr 0&\vline&0& 0&\vline&0& 0& 0& \tr  0&\tr 0&\tr  0& \tr 0&\tr  0& \tr 1& \tr 0& \tr 0&\tr  0\\
	0& 0& 0& 0& 0& \tr 0& \tr 0& \tr 0& \tr 0& \tr 0& \tr 0&\tr  0& \tr 0& \tr 0&\vline&0& 0&\vline&0& 0& 0&\tr  0& \tr 0& \tr 0& \tr 0& \tr 0& \tr 0& \tr 1& \tr 0& \tr 0\\
	0& 0& 0&0&0&\tr 0& \tr 0& \tr 0& \tr 0& \tr 0& \tr 0& \tr 0&\tr 0& \tr 0&\vline&0& 0&\vline&0& 0& 0& \tr 0&  \tr 0& \tr 0& \tr 0& \tr 0& \tr 0& \tr 0& \tr 1& \tr 0\\
	0& 0& 0& 0&  0& \tr 0& \tr 0& \tr 0& \tr 0& \tr 0& \tr 0& \tr 0&  \tr 0& \tr 0&\vline& 0& 0&\vline& 0& 0& 0& \tr 0&  \tr 0& \tr 0& \tr 0& \tr 0& \tr 0& \tr 0& \tr 0& \tr 1\\
	\hline\\
	0& 0& 0& 0& 0&  0&  0&   0&  0&  0&   0&  0& 0&  0&\vline&\tr 1&\tr 0&\vline& 0& 0& 0& 0& 0& 0& 0& 0& 0& 0& 0& 0\\
	0& 0& 0& 0& 0& 0& 0&  0& 0&  0& 0& 0&0& 0&\vline&\tr 0&\tr 1&\vline& 0& 0& 0& 0& 0& 0& 0& 0& 0& 0& 0& 0\\
	0& 0& 0& 0& 0&  \tr 1&  \tr 0&  \tr 0& \tr 0&  \tr 0&  \tr 0& \tr 0& \tr 0& \tr 0&\vline&0& 0&\vline& 0& 0& 0& 0& 0& 0& 0& 0& 0& 0& 0& 0\\
	0& 0& 0& 0& 0&  \tr 0&  \tr 1&  \tr 0& \tr 0&  \tr 0&  \tr 0& \tr 0& \tr 0& \tr 0&\vline&0& 0&\vline& 0& 0& 0& 0& 0& 0& 0& 0& 0& 0& 0& 0\\
	0& 0& 0& 0& 0&  \tr 0&  \tr 0&  \tr 1& \tr 0&  \tr 0&  \tr 0& \tr 0& \tr 0& \tr 0&\vline& 0& 0&\vline& 0& 0& 0& 0& 0& 0& 0& 0& 0& 0& 0& 0\\
	0& 0& 0& 0& 0& \tr 0&  \tr 0&  \tr 0& \tr 1&  \tr 0&  \tr 0& \tr 0& \tr 0& \tr 0&\vline&0& 0&\vline& 0& 0& 0& 0& 0& 0& 0& 0& 0& 0& 0& 0\\
	0& 0& 0& 0& 0&  \tr 0&  \tr 0& \tr 0& \tr 0&  \tr 1&  \tr 0& \tr 0& \tr 0& \tr 0&\vline&  0& 0&\vline& 0& 0& 0& 0& 0& 0& 0& 0& 0& 0& 0& 0\\
	0& 0& 0& 0& 0&  \tr 0&  \tr 0&  \tr 0& \tr 0&  \tr 0& \tr 1& \tr 0& \tr 0& \tr 0&\vline&0& 0&\vline& 0& 0& 0& 0& 0& 0& 0& 0& 0& 0& 0& 0\\
	0& 0& 0& 0& 0&  \tr 0&  \tr 0&  \tr 0& \tr 0&  \tr 0& \tr 0& \tr 1& \tr 0& \tr 0&\vline&0& 0&\vline& 0& 0& 0& 0& 0& 0& 0& 0& 0& 0& 0& 0\\
	0& 0& 0& 0& 0&  \tr 0&  \tr 0&  \tr 0& \tr 0& \tr  0&  \tr 0& \tr 0& \tr 1& \tr 0&\vline&0& 0&\vline& 0& 0& 0& 0& 0& 0& 0& 0& 0& 0& 0& 0\\
	0& 0& 0& 0& 0&  \tr 0& \tr 0&  \tr 0& \tr 0&  \tr 0&  \tr 0& \tr 0& \tr 0& \tr 1&\vline&0&  0&\vline& 0& 0& 0& 0& 0& 0& 0& 0& 0& 0& 0& 0
	\end{smallmatrix}}\right].
\end{align*}
The \textbf{EMCF} indices are $\bar \epsilon_1=2, \bar \epsilon_2=2,\bar \epsilon_3=1$, $\delta=2$, $\bar \sigma_1=\bar \sigma_2=,\dots,=\bar \sigma_9=1$. Note that $n_2, a, c, e$ are all zeros {and} we have 3 subsystems {only}.

Step 5: {Using} the {$v$-reduction and implicitation of Definition \ref{Def:red_impl}}, we get  the following DACS from the above \textbf{EMCF}:
\begin{align*}
\left[  {\transparent{0.4}{
		\begin{smallmatrix}
		\tr 1&\tr 0&\tr 0&\tr 0&{\tr 0}&0&0&0&0&0&0&0&0&0\\
		\tr 0&\tr 0&\tr 1&\tr 0&\tr 0&0&0&0&0&0&0&0&0&0\\
		0&0&0&0&0& 0&0&0&0&0&0&0&0&0\\
		0&0&0&0&0& 0& 0&0&0&0&0&0&0&0\\
		0&0&0&0&0&\tr 0&\tr 0&\tr 0&\tr 0&\tr 0&\tr 0&\tr 0&\tr 0&\tr 0\\
		0&0&0&0&0&\tr 0&\tr 0&\tr 0&\tr 0&\tr 0&\tr 0&\tr 0&\tr 0&\tr 0\\
		0&0&0&0&0&\tr 0&\tr 0&\tr 0&\tr 0&\tr 0&\tr 0&\tr 0&\tr 0&\tr 0\\
		0&0&0&0&0&\tr 0&\tr 0&\tr 0&\tr 0&\tr 0&\tr 0&\tr 0&\tr 0&\tr 0\\
		0&0&0&0&0&\tr 0&\tr 0&\tr 0&\tr 0&\tr 0&\tr 0&\tr 0&\tr 0&\tr 0\\
		0&0&0&0&0&\tr 0&\tr 0&\tr 0&\tr 0&\tr 0&\tr 0&\tr 0&\tr 0&\tr 0\\
		0&0&0&0&0&\tr 0&\tr 0&\tr 0&\tr 0&\tr 0&\tr 0&\tr 0&\tr 0&\tr 0\\
		0&0&0&0&0&\tr 0&\tr 0&\tr 0&\tr 0&\tr 0&\tr 0&\tr 0&\tr 0&\tr 0\\
		0&0&0&0&0&\tr 0&\tr 0&\tr 0&\tr 0&\tr 0&\tr 0&\tr 0&\tr 0&\tr 0\\
		\end{smallmatrix}}}\right]\dot z= \left[ {\transparent{0.4}\begin{smallmatrix}
	\tr 0&\tr 1&\tr 0&\tr 0&\tr 0&0&0&0&0&0&0&0&0&0\\
	\tr 0&\tr 0&\tr 0&\tr 1&\tr 0&0&0&0&0&0&0&0&0&0\\
	0&0&0&0&0& 0&0&0&0&0&0&0&0&0\\
	0&0&0&0&0& 0& 0&0&0&0&0&0&0&0\\
	0&0&0&0&0&\tr 1&\tr 0&\tr 0&\tr 0&\tr 0&\tr 0&\tr 0&\tr 0&\tr 0\\
	0&0&0&0&0&\tr 0&\tr 1&\tr 0&\tr 0&\tr 0&\tr 0&\tr 0&\tr 0&\tr 0\\
	0&0&0&0&0&\tr 0&\tr 0&\tr 1&\tr 0&\tr 0&\tr 0&\tr 0&\tr 0&\tr 0\\
	0&0&0&0&0&\tr 0&\tr 0&\tr 0&\tr 1&\tr 0&\tr 0&\tr 0&\tr 0&\tr 0\\
	0&0&0&0&0&\tr 0&\tr 0&\tr 0&\tr 0&\tr 1&\tr 0&\tr 0&\tr 0&\tr 0\\
	0&0&0&0&0&\tr 0&\tr 0&\tr 0&\tr 0&\tr 0&\tr 1&\tr 0&\tr 0&\tr 0\\
	0&0&0&0&0&\tr 0&\tr 0&\tr 0&\tr 0&\tr 0&\tr 0&\tr 1&\tr 0&\tr 0\\
	0&0&0&0&0&\tr 0&\tr 0&\tr 0&\tr 0&\tr 0&\tr 0&\tr 0&\tr 1&\tr 0\\
	0&0&0&0&0&\tr 0&\tr 0&\tr 0&\tr 0&\tr 0&\tr 0&\tr 0&\tr 0&\tr 1\\
	\end{smallmatrix}}\right]z+\left[ {\transparent{0.4} \begin{smallmatrix}
	0&0\\
	0&0\\
	\tr 1&0\\
	0&\tr 1\\
	0&0\\
	0&0\\
	0&0\\
	0&0\\
	0&0\\
	0&0\\
	0&0\\
	0&0\\
	0&0\\
	\end{smallmatrix}}\right]\tilde u,
\end{align*}
where $z$ and $\tilde u$ is the new ``generalized'' state and the new input, respectively. Obviously, the above DACS {is in} the \textbf{FBCF} with indices $\bar \epsilon'_1=2, \bar \epsilon'_2=2,\bar \epsilon'_3=1$, $\sigma'_1=\sigma'_2=1$, $\bar \sigma'_1=\bar \sigma'_2=,\dots,=\bar \sigma'_9=1$. Moreover, $a'=n_{\rho}=e'=0$, $c'=\delta=2$.
\section{Proofs of the results}\label{sec:proofs}
\subsection{{Proofs of Proposition \ref{Pro:LinDAEexpl}, Proposition \ref{Pro:solution}   and Theorem \ref{Thm:equivalence}}}\label{ProofThm:equivalence}
\begin{proof}[Proof of Proposition \ref{Pro:LinDAEexpl}]
 	\emph{If.} Suppose that $\Lambda^{uv}$ and $\tilde \Lambda^{u\tilde v}$ are equivalent via \blue{a} transformation given \blue{by} (\ref{Eq:mapDACS}). First, ${\rm Im\,}\tilde B^{\tilde v}\mathop=\limits^{(\ref{Eq:mapDACS})}{\rm Im\,} B^{v}T^{-1}_v=\ker E_1=\ker E$ implies that $\tilde B^{\tilde v}$ is another choice  such that ${\rm Im\,} \tilde B^{\tilde v}=\ker E$. Observe that
	$$
	\tilde \Lambda^{u\tilde v}:\left\{ {\begin{array}{r@{\,}l}
		\dot x&=\tilde Ax+\tilde B^uu+\tilde B^{\tilde v}\tilde v\mathop=\limits^{(\ref{Eq:mapDACS})}(A+K C+B^vF_v)x+(B^u+ K D^u+B^vR)u+ B^vT^{-1}_v\tilde v  \\
		\tilde y&=\tilde Cx+\tilde D^uu\mathop=\limits^{(\ref{Eq:mapDACS})} T_y Cx+T_y D^uu.
		\end{array}}\right.	
	$$
Then pre-multiply the differential part of \blue{$\tilde \Lambda^{u\tilde v}$} by $E_1$, to get (notice that $A=E^{\dagger}_1H_1$, $B^u=E^{\dagger}_1L_1$, ${\rm Im\,}B^v=\ker E_1$ and $C=H_2$, $D^u=L_2$)
	$$
	\left\{ {\begin{array}{r@{\ }l}
		E_1\dot x&=(H_1+E_1K H_2)x+(L_1+E_1K L_2)u \\
		\tilde y&=T_y H_2x+T_y L_2u.
		\end{array}}\right.	
	$$
	Thus  $\tilde \Lambda^{u\tilde v}$ is {an} $(I_l,\tilde v)$-explicitation of the following DACS:
	$$
	\left[ {\begin{smallmatrix}
		E_1\\
		0
		\end{smallmatrix}} \right]\dot x=\left[ {\begin{smallmatrix}
		H_1+E_1K H_2\\
		T_y H_2
		\end{smallmatrix}} \right]x+\left[ {\begin{smallmatrix}
		L_1+E_1K L_2\\
		T_y L_2
		\end{smallmatrix}} \right]u.
	$$
	Since the above DACS can be transformed from $\Delta^u$ via $\tilde Q=Q'Q$, where $Q'=\left[ {\begin{smallmatrix}
		I_q&E_1K\\
		0&T_y
		\end{smallmatrix}}\right]$, it proves that $\tilde \Lambda^{u\tilde v}$ is {a} $(\tilde Q,\tilde v)$-explicitation of $\Delta^u$ corresponding {to} the choice of  invertible matrix $\tilde Q$. Finally, by $E_1\tilde A=H_1+E_1KH_2$, $E_1\tilde B^u=L_1+E_1KL_2$, we get $\tilde A=\tilde E^{\dagger}_1(H_1+KH_2)$ and $\tilde B^u=\tilde E^{\dagger}_1(L_1+KL_2)$ for another choice of right inverse $\tilde E^{\dagger}_1$ of $E_1$. 
	
 	\emph{Only if.} Suppose that $\tilde \Lambda^{u\tilde v}\in \mathbf{Expl}(\Delta^u)$ via $\tilde Q$, $\tilde E^{\dagger}_1$ and $\tilde B^{\tilde v}$. First, by ${\rm Im\,}\tilde B^{\tilde v}=\ker E={\rm Im\,}B^v$, there exists an invertible matrix $T^{-1}_v$ such that $\tilde B^{\tilde v}=B^vT^{-1}_v$. Moreover, since $E^{\dagger}_1$ is a right inverse of $E_1$ if and only if any solution $\dot x$ of $E_1\dot x=w$ is given by $E^{\dagger}_1w$, we have $E_1E_1^{\dagger}(H_1x+L_1u)=H_1x+L_1u$ and $E_1\tilde E_1^{\dagger}(H_1x+L_1u)=H_1x+L_1u$. It follows that $E_1(\tilde E^{\dagger}_1-E^{\dagger}_1)(H_1x+L_1u)=0$, so $(\tilde E^{\dagger}_1-E^{\dagger}_1)H_1\in \ker E_1$, $(\tilde E^{\dagger}_1-E^{\dagger}_1)L_1\in \ker E_1$. Since $\ker E_1={\rm Im\,}B^v$, it follows that $(\tilde E^{\dagger}_1-E^{\dagger}_1)H_1=B^v
	F_v$  and  $(\tilde E^{\dagger}_1-E^{\dagger}_1)L_1=B^v
	R$ for suitable $F_v$ and {$R$}. Furthermore, since $Q$ is such that $E_1$ of
	$
	QE=\left[ {\begin{smallmatrix}
		E_1\\
		0
		\end{smallmatrix}} \right]$ is of full row rank,  it follows that  any other $\tilde Q$, such that $\tilde E_1$ of $\tilde QE=\left[ {\begin{smallmatrix}
		\tilde E_1\\
		0
		\end{smallmatrix}} \right]$ is full row rank, must be of the form $\tilde Q=Q'Q$, where $Q'=\left[ {\begin{smallmatrix}
		Q_1&Q_2\\
		0&Q_4
		\end{smallmatrix}}\right]$.
	Thus via $\tilde Q$, $\Delta^u$ is ex-equivalent to
	\begin{align*}
 	Q'\left[ {\begin{smallmatrix}
		E_1\\
		0
		\end{smallmatrix}} \right]\dot x=Q'\left[ {\begin{smallmatrix}
		H_1\\
		H_2
		\end{smallmatrix}} \right]+Q'\left[ {\begin{smallmatrix}
		L_1\\
		L_2
		\end{smallmatrix}} \right]u \Rightarrow \left[ {\begin{smallmatrix}
		Q_1E_1\\
		0
		\end{smallmatrix}} \right]\dot x=\left[ {\begin{smallmatrix}
		Q_1H_1+Q_2H_2\\
		Q_4H_2
		\end{smallmatrix}} \right]+\left[ {\begin{smallmatrix}
		Q_1L_1+Q_2L_2\\
		Q_4L_2
		\end{smallmatrix}} \right]u.
	\end{align*} 
We {obtain} the following equations, using $\tilde E_1^{\dagger}$ and $\tilde B^{\tilde v}$, and based on the right-hand side of the above:
	\begin{align*}
	\left\{ {\begin{aligned} 
		\dot x&=(\tilde E_1^{\dagger}H_1+\tilde E_1^{\dagger}Q^{-1}_1Q_2H_2)x+(\tilde E_1^{\dagger}L_1+\tilde E_1^{\dagger}Q^{-1}_1Q_2L_2)u+\tilde B^{\tilde v}v\\&=(E^{\dagger}_1H_1+B^vF_v+E_1^{\dagger}Q^{-1}_1Q_2C)x+(E^{\dagger}_1H_1+B^vR+E_1^{\dagger}Q^{-1}_1Q_2D^u)u+B^vT^{-1}_v\tilde v\\
		0 &= Q_4H_2+Q_4L_2=Q_4Cx+Q_4D^u.
		\end{aligned}}\right.	
	\end{align*}
	Thus the explicitation of $\Delta^u$ via $\tilde Q$, $\tilde E^{\dagger}_1$ and $\tilde B^{\tilde v}$ is 
	$$
	\tilde \Lambda^{u\tilde v}:\left\{ {\begin{array}{*{20}{l}}
		\dot x=Ax+ K( Cx+D^uu) +B^v(F_vx +R u+T^{-1}_v\tilde v)=\tilde Ax+\tilde B^uu+\tilde B^{\tilde v}\tilde v\\
		\tilde y= T_y Cx+T_y D^uu=\tilde Cx+\tilde D^uu.
		\end{array}}\right.	
	$$
	where $K=E_1^{\dagger}Q^{-1}_1Q_2$, $T_y=Q_4$. Now we can see {that} $\Lambda^{uv}$ and $\tilde \Lambda^{u\tilde v}$ are equivalent via transformations {listed} in (\ref{Eq:mapDACS}).  
\end{proof}

\begin{proof}[Proof of Proposition \ref{Pro:solution}]
 Consider equation (\ref{Eq:expl1}) {of} the $(Q,v)$-explicitation procedure. Since $Q$-transformations preserve solutions of $\Delta^u$,  equation (\ref{Eq:expl1}) resulting from a  $Q$-transformation of $\Delta^u$ has the same solutions {as} $\Delta^u$. Thus we need to prove that equations  (\ref{Eq:expl1}) and  (\ref{Eq:expl3}) have corresponding solutions for any choices of $E^{\dagger}_1$ and $B^v$. Moreover, the second equation $0=H_2x+L_2u$ of (\ref{Eq:expl1}) coincides  with $0=Cx+D^uu$ of (\ref{Eq:expl3}) (since $C=H_2$ and $D^u=L_2$).  So we only need to prove that $(x(t),u(t))$ with $x(t)\in \mathcal C^1$ and $u(t)\in \mathcal C^0$  is a solution of (\ref{Eq:expl1a}) if and only if there exists $v(t)\in\mathcal C^0$ such that $(x(t),u(t),v(t))$ is a solution of (\ref{Eq:ODEcontrolnooutput1})   { independently of the choice of $E^{\dagger}_1$, defining  $A=E^{\dagger}_1H$ and $B^u=E^{\dagger}_1L_1$, and of the choice of $B^v$ satisfying ${\rm Im\,}B^v=\ker E_1$.} 
	
 \emph{If.} Suppose that  $(x(t),u(t),v(t))$ is a solution of (\ref{Eq:ODEcontrolnooutput1}). Then we have $
		\dot x(t)=Ax(t)+B^uu(t)+B^vv(t)$.
		Pre-multiplying the {last} equation by $E_1$, we conclude ({recall} that $A=E^{\dagger}_1H_1$ , $B^u=E^{\dagger}_1L_1$, $\ker E_1={\rm Im\,}B_v$) {that} $E_1\dot x(t)=H_1x(t)+L_1u(t)$, which proves that $(x(t),u(t))$ is a solution of (\ref{Eq:expl1a}). 
	
	\emph{Only if.} Suppose that $(x(t),u(t))$ is a solution of (\ref{Eq:expl1a}). Rewrite $E_1\dot x$ as 
		$
		\left[ \begin{smallmatrix}
		E^1_1&E^2_1
		\end{smallmatrix}\right] \left[ \begin{smallmatrix}
		\dot x_1\\\dot x_2
		\end{smallmatrix}\right] 
		$, where $E^1_1\in \mathbb R^{q\times q}$ and $x= \left[ \begin{smallmatrix}
		x_1\\  x_2
		\end{smallmatrix}\right] $. Then, without loss of generality, we assume that the matrix $E^1_1$ is invertible (if not, we permute the components of $x$ such that the first $q$ columns of $E_1$ are independent). Thus, a choice of the right inverse of $E_1$ is $E_1^{\dagger}= \left[ \begin{smallmatrix}
		(E^1_1)^{-1}\\0
		\end{smallmatrix}\right] $ (since $\left[ \begin{smallmatrix}
		E^1_1&E^2_1
		\end{smallmatrix}\right]\left[ \begin{smallmatrix}
		(E^1_1)^{-1}\\0
		\end{smallmatrix}\right] =I_q$), which gives the matrices $A$, $B^u$, $B^v$ of (\ref{Eq:ODEcontrolnooutput1}) \blue{to be, respectively,}
		$$
		A:=E^{\dagger}_1 H_1=\left[ \begin{smallmatrix}
		(E^1_1)^{-1}H_1\\0
		\end{smallmatrix}\right], \ \ \ B^u:=E^{\dagger}_1 L_1=\left[ \begin{smallmatrix}
		(E^1_1)^{-1}L_1\\0
		\end{smallmatrix}\right], \ \ \ B^v:=\left[ \begin{smallmatrix}
		-(E^1_1)^{-1}E^2_1\\I_{s}
		\end{smallmatrix}\right].
		$$
		Let $v(t)=\dot x_2(t)$, {then $v\in \mathcal C^0$} and it is clear that if $(x(t),u(t))=((x_1(t),x_2(t)), u(t))$ is a solution of (\ref{Eq:expl1a}), then $(x(t), u(t), v(t))$ solves (\ref{Eq:ODEcontrolnooutput1})   {with $(A,B^u,B^v)$ as above,} since
		$$
		\left[ \begin{smallmatrix}
		E^1_1&E^2_1
		\end{smallmatrix}\right] \left[ \begin{smallmatrix}
		\dot x_1(t)\\\dot x_2(t)
		\end{smallmatrix}\right] =H_1x_1(t)+L_1u(t) \Rightarrow \dot x_1(t)= (E^1_1)^{-1}H_1x(t)+(E^1_1)^{-1}L_1u(t)-(E^1_1)^{-1}E^2_1\dot x_2(t).
		$$
		Notice that if we choose another right inverse $\tilde E^{\dagger}_1$ of $E_1$ and {another} matrix $\tilde B^v$ such that ${\rm Im\,}\tilde B^v=\ker E_1$, then by Proposition \ref{Pro:LinDAEexpl},   equation (\ref{Eq:ODEcontrolnooutput1}) becomes
		$$
		\dot x=\tilde Ax+\tilde B^uu+\tilde B^{\tilde v}\tilde v \Leftrightarrow \dot x=  Ax+ B^uu+ B^{v}(F_vx+Ru+T^{-1}_v\tilde v).
		$$
		We thus {conclude} that there exists $\tilde v(t)=-T_vF_vx(t)-T_vRu(t)+T_vv(t)=-T_vF_vx(t)-T_vRu(t)+T_v\dot x_2(t)$ such that $(x(t),u(t),\tilde v(t))$ solves equation  (\ref{Eq:ODEcontrolnooutput1}). Therefore, $\Delta^u$ has corresponding solutions with  any $(Q,v)$-explicitation {independently of the} choice of $Q$, $E^{\dagger}_1$ and $B^v$. 
\end{proof}

\begin{proof}[Proof of Theorem \ref{Thm:equivalence}]
	Without loss of generality, we assume that the system matrices of $\Delta^u=(E,H,L)$ and $\tilde \Delta^{\tilde u}=(\tilde E,\tilde H,\tilde L)$ are of the following form:
	$$
	\begin{array}{llllll}
	E=\left[ \begin{smallmatrix}
	I_q&0\\
	0&0
	\end{smallmatrix}\right],&	H=\left[ \begin{smallmatrix}
	H_1\\
	H_2
	\end{smallmatrix}\right],&	L=\left[ \begin{smallmatrix}
	L_1\\
	L_2
	\end{smallmatrix}\right],&
	\tilde E=\left[ \begin{smallmatrix}
	I_{\tilde q}&0\\
	0&0
	\end{smallmatrix}\right], & \tilde H=\left[ \begin{smallmatrix}
	\tilde H_1\\
	\tilde H_2
	\end{smallmatrix}\right],&	L=\left[ \begin{smallmatrix}
	\tilde L_1\\
	\tilde L_2
	\end{smallmatrix}\right],
	\end{array}
	$$
	where $H_1\in \mathbb R^{q\times n}$, $L_1\in\mathbb R^{q\times m}$, $\tilde H_1\in \mathbb R^{\tilde q\times n}$, $\tilde L_1\in\mathbb R^{\tilde q\times m}$, $q={\rm rank\,} E$, $\tilde q={\rm rank\,} \tilde E$. Since if not, we can always find $Q,\tilde Q\in Gl(l,\mathbb R)$, $P,\tilde P\in Gl(n,\mathbb R)$ such that  $$(QEP^{-1},QHP^{-1},QL) \ \ {\rm and}  \ \ (\tilde Q \tilde E\tilde P^{-1},\tilde Q \tilde H\tilde P^{-1},\tilde Q \tilde L)$$ are of the above desired form  {and it} is easily seen that the ex-fb-equivalence of $(E,H,L)$ and $(\tilde E, \tilde H, \tilde L)$ is equivalent  to (implied by and implying) {that} of  $(QEP^{-1},QHP^{-1},QL)$ and $(\tilde Q \tilde E\tilde P^{-1},\tilde Q \tilde H\tilde P^{-1},\tilde Q \tilde L)$. Thus we can use the above system matrices to represent $\Delta^u$ and $\tilde \Delta^{\tilde u}$ in the remaining  {part of} proof.
	
	{By} the assumptions that $\Lambda^{uv}\in \mathbf{Expl}(\Delta^u)$ and $\tilde \Lambda^{\tilde u \tilde v}\in \mathbf{Expl}(\tilde \Delta^{\tilde u})$, we have 
	\begin{align}\label{Eq:chosedsysmatrics}
	\left[ {\begin{smallmatrix}
		{{A}}&{ B^u}& B^v\\
		{{C}}&{ D^u}&0
		\end{smallmatrix}} \right] = \left[ {\begin{smallmatrix}
		H_1&\vline& L_1&\vline&0\\
		0&\vline& 0&\vline&I_{n-q}\\
		\hline
		H_2&\vline& L_2&\vline&0
		\end{smallmatrix}} \right], \ \ \left[ {\begin{smallmatrix}
		{\tilde{A}}&{\tilde B^{\tilde u}}&\tilde B^{\tilde v}\\
		{\tilde{C}}&{\tilde D^{\tilde u}}&0
		\end{smallmatrix}} \right] = \left[ {\begin{smallmatrix}
		\tilde H_1&\vline&\tilde L_1&\vline&0\\
		0&\vline& 0&\vline&I_{n-\tilde q}\\
		\hline
		\tilde H_2&\vline& \tilde L_2&\vline&0
		\end{smallmatrix}} \right].
	\end{align}
	{We have chosen $\Lambda^{uv}$ and $\tilde \Lambda^{\tilde u \tilde v}$ as above for convenience, any other choice based on the explicitation procedure could have been made.}  Since {any} two ODECSs in an explicitation class {are} {EM-equivalent}, {the} choice of  a $(Q,v)$-explicitation  makes no difference {when proving} {EM-equivalence}. {Therefore,} we will use the system matrices in  (\ref{Eq:chosedsysmatrics}) for the following proof.
	
	\emph{If}. Suppose $\Lambda^{uv} \mathop  \sim \limits^{EM} \tilde \Lambda^{\tilde u \tilde v} $. Then there exist transformation matrices $T_x$, $T_u$, $T_v$, $T_y$, $F_u$, $F_v$, $R$, $K$ such that   (\ref{Eq:EME}) holds. {Substituting} the system matrices of (\ref{Eq:chosedsysmatrics}) into (\ref{Eq:EME}), we have 
	\begin{align}\label{Eq:pfTh1}
	\setlength{\arraycolsep}{4pt}
	\left[ {\begin{smallmatrix}
		\tilde H_1&\vline&\tilde L_1&\vline&0\\
		0&\vline& 0&\vline&I_{n-q}\\
		\hline
		\tilde H_2&\vline& \tilde L_2&\vline&0
		\end{smallmatrix}} \right]\! =\! \left[ {\begin{smallmatrix}
		T_x&{T_xK}\\
		0&{T_y}
		\end{smallmatrix}} \right]\left[ {\begin{smallmatrix}
		H_1&\vline& L_1&\vline&0\\
		0&\vline& 0&\vline&I_{n-\tilde q}\\
		\hline
		H_2&\vline& L_2&\vline&0
		\end{smallmatrix}} \right]\left[ {\begin{smallmatrix}
		{T_x^{-1}}&0&0\\
		{F_uT_x^{-1}}&{T^{-1}_u}&0\\
		(F_v+RF_u)T_x^{-1}&RT^{-1}_u&T^{-1}_v
		\end{smallmatrix}} \right].
	\end{align}
{Represent} $T_x\!=\!\left[ {\begin{smallmatrix}
		T_x^1&T_x^2\\
		T_x^3&T_x^4
		\end{smallmatrix}} \right]$, where $T_x^1\in \mathbb R^{q\times q}$. By 
	$
	\tilde B^{\tilde v}\!=\!T_xB^vT_v^{-1}$, we get $\left[ {\begin{smallmatrix}
		0\\
		I
		\end{smallmatrix}} \right]\!=\!\left[ {\begin{smallmatrix}
		T_x^1&T_x^2\\
		T_x^3&T_x^4
		\end{smallmatrix}} \right]\left[ {\begin{smallmatrix}
		0\\
		I
		\end{smallmatrix}} \right]T_v^{-1},$
	hence it can be deduced that  $q=\tilde q$ and $T^2_x=0$. Moreover, $T_x^4T^{-1}_v=I$ implies {that} $T^4_x$ is invertible. Thus by the invertibility of $T_x$, we have $T_x^1$ is  invertible as well.
	
	Subsequently, premultiply equation (\ref{Eq:pfTh1}) by $\left[ {\begin{smallmatrix}
		(T_x^1)^{-1}&0&0\\
		0&0&I_{l-q}
		\end{smallmatrix}} \right]$ {and} we get
	\begin{align*}
	\setlength{\arraycolsep}{2pt}
	\left[ {\begin{smallmatrix}
		(T_x^1)^{-1}&0\\
		0&I_{l-q}
		\end{smallmatrix}} \right]\left[ {\begin{smallmatrix}
		\tilde H_1&\vline&\tilde L_1&\vline&0\\
		\tilde H_2&\vline& \tilde L_2&\vline&0
		\end{smallmatrix}} \right] = \left[ {\begin{smallmatrix}
		I_q&{K_1}\\
		0&{T_y}
		\end{smallmatrix}} \right]\left[ {\begin{smallmatrix}
		H_1&\vline& L_1&\vline&0\\
		H_2&\vline& L_2&\vline&0
		\end{smallmatrix}} \right]\left[ {\begin{smallmatrix}
		{T_x^{-1}}&0&0\\
		{F_uT_x^{-1}}&{T^{-1}_u}&0\\
		(F_v+RF_u)T_x^{-1}&RT^{-1}_u&T^{-1}_v
		\end{smallmatrix}} \right],
	\end{align*}
	where $K_1=\left[ {\begin{smallmatrix}
		I_q&{(T_x^{1})}^{-1}T_x^{2}\end{smallmatrix}} \right]K$. It {follows} that 
	\begin{align*}
	\left[ {\begin{smallmatrix}
		\tilde H_1&\vline&\tilde L_1\\
		\tilde H_2&\vline& \tilde L_2
		\end{smallmatrix}} \right] = \left[ {\begin{smallmatrix}
		T_x^1&T_x^1{K_1}\\
		0&{T_y}
		\end{smallmatrix}} \right]\left[ {\begin{smallmatrix}
		H_1&\vline& L_1\\
		H_2&\vline& L_2
		\end{smallmatrix}} \right]\left[ {\begin{smallmatrix}
		{T_x^{-1}}&0\\
		{F_uT_x^{-1}}&{T^{-1}_u}
		\end{smallmatrix}} \right].
	\end{align*}
	Thus $ \Delta^u\mathop  \sim \limits^{ex-fb} \tilde \Delta^{\tilde u}$ via
	\begin{align*}
	Q=\left[ {\begin{smallmatrix}
		T_x^1&T_x^1{K_1}\\
		0&{T_y}
		\end{smallmatrix}} \right], \ \ \ P=T_x, \ \ \ F=F_u, \ \ \ G=T^{-1}_u.
	\end{align*} 
	
	\emph{Only if}. Suppose $ \Delta^u\mathop  \sim \limits^{ex-fb} \tilde \Delta^{\tilde u}$. Then there exist invertible matrices $Q$, $P$, and matrices $F$, $G$ of appropriate sizes such that equation (\ref{Eq:ex-fb-eq}) holds. {Represent} $Q=\left[ {\begin{smallmatrix}
		Q_1&Q_2\\
		Q_3&Q_4
		\end{smallmatrix}} \right]$, where $Q_1\in \mathbb R^{q\times q}$, and  $P^{-1}=\left[ {\begin{smallmatrix}
		P_1&P_2\\
		P_3&P_4
		\end{smallmatrix}} \right]$, where $P_1\in \mathbb R^{q\times q}$. Then by 	
	\begin{align*}
	\tilde E=QEP^{-1}\Rightarrow\left[ {\begin{smallmatrix}
		I_{\tilde q}&0\\
		0&0
		\end{smallmatrix}} \right]=\left[ {\begin{smallmatrix}
		Q_1&Q_2\\
		Q_3&Q_4
		\end{smallmatrix}} \right]\left[ {\begin{smallmatrix}
		I_{q}&0\\
		0&0
		\end{smallmatrix}} \right]\left[ {\begin{smallmatrix}
		P_1&P_2\\
		P_3&P_4
		\end{smallmatrix}} \right],
	\end{align*}
	we immediately get $q=\tilde q$ and $Q_1P_1=I$, $Q_1P_2=0$, $Q_3P_1=0$, which implies {that} $Q_1$, $P_1$ are invertible matrices, $P_2=0$, and $Q_3=0$. Thus by the invertibility of $Q$ and $P$, we have $Q_4$ and $P_4$ are invertible matrices as well.  Then by equation (\ref{Eq:ex-fb-eq}), we get
	\begin{align*}
	\left[ {\begin{smallmatrix}
		\tilde H_1&\vline&\tilde L_1\\
		\tilde H_2&\vline& \tilde L_2
		\end{smallmatrix}} \right] = \left[ {\begin{smallmatrix}
		Q_1&Q_2\\
		0&Q_4
		\end{smallmatrix}} \right]\left[ {\begin{smallmatrix}
		H_1&\vline& L_1\\
		H_2&\vline& L_2
		\end{smallmatrix}} \right]\left[ {\begin{smallmatrix}
		{P^{-1}}&0\\
		{FP^{-1}}&{G}
		\end{smallmatrix}} \right],
	\end{align*}
	which implies {that} the following equation holds:
	\begin{align*}
	\left[ {\begin{smallmatrix}
		\tilde H_1&\vline&\tilde L_1&\vline&0\\
		0&\vline& 0&\vline&I_{n-q}\\
		\hline
		\tilde H_2&\vline& \tilde L_2&\vline&0
		\end{smallmatrix}} \right] = \left[ {\begin{smallmatrix}
		Q_1&0&\vline&Q_2\\
		X&P_4^{-1}&\vline&0\\
		\hline
		0&0&\vline&Q_4
		\end{smallmatrix}} \right]\left[ {\begin{smallmatrix}
		H_1&\vline& L_1&\vline&0\\
		0&\vline& 0&\vline&I_{n-\tilde q}\\
		\hline
		H_2&\vline& L_2&\vline&0
		\end{smallmatrix}} \right]\left[ {\begin{smallmatrix}
		{P^{-1}}&0&0\\
		{FP^{-1}}&G&0\\
		Y&Z&P_4
		\end{smallmatrix}} \right],
	\end{align*}
	where $X=-P^{-1}_4P_3P^{-1}_1$, $Y=(P_3P_1^{-1}H_1+P_3P_1^{-1}L_1F)P^{-1}$, $Z=P_3P^{-1}_1L_1G$. So $\Lambda^{uv} \mathop  \sim \limits^{EM} \tilde \Lambda^{\tilde u \tilde v} $ via 
	$$
	\begin{array}{llll}
	T_x=P,& T_u=G^{-1},& T_v=P^{-1}_4,& T_y=Q_4,\\
	F_u=F,& F_v=P_3P_1^{-1}H_1,&R=P_3P_1^{-1}L_1,& K=\left[ {\begin{smallmatrix}
		P_1Q_2\\
		P_3Q_2
		\end{smallmatrix}} \right].
	\end{array}
	$$
\end{proof}
\subsection{{Proof of Proposition \ref{Pro:subspacesrelation}}}\label{ProofPro:subspacesrelation}
\begin{proof}
	Without loss of generality, we {may} assume {that} $\Delta^u_{l,n,m}=(E,H,L)$ is of the following form:
	\begin{align*}
	\left[ {\begin{smallmatrix}
		I_q&0\\
		0&0
		\end{smallmatrix}} \right]\left[ {\begin{smallmatrix}
		\dot x_1\\
		\dot x_2
		\end{smallmatrix}} \right]=\left[ {\begin{smallmatrix}
		H_1&H_2\\
		H_3&H_4
		\end{smallmatrix}} \right]\left[ {\begin{smallmatrix}
		x_1\\
		x_2
		\end{smallmatrix}} \right]+\left[ {\begin{smallmatrix}
		L_1\\
		L_2
		\end{smallmatrix}} \right]u,
	\end{align*}
	where $q={\rm rank\,} E$ and $H_1\in\mathbb R^{q\times q}$, $H_2\in\mathbb R^{q\times (n-q)}$, $H_3\in\mathbb R^{p\times q}$, $H_4\in\mathbb R^{p\times (n-q)}$, $L_1\in\mathbb R^{q\times m}$, $L_2\in\mathbb R^{p\times m}$, where $p=l-q$. Since if not, we can always find $Q\in Gl(l,\mathbb R)$, $P\in Gl(n,\mathbb R)$ such that  $ \tilde\Delta^{\tilde u}=(QEP^{-1},QHP^{-1},QL)$ is of the above form. Then, it is not hard to  {check}  that $\mathscr V_i( \tilde \Delta^{\tilde u})=P\mathscr V_i(\Delta^u)$, $\mathscr W_i( \tilde \Delta^{\tilde u})=P\mathscr W_i(\Delta^u)$, $\hat{\mathscr W}_i( \tilde \Delta^{\tilde u})=P\hat{\mathscr W}_i(\Delta^u)$. Moreover, for two ODECSs $\Lambda^w=\Lambda^{uv}\in \mathbf{Expl}(\Delta^u)$, $\tilde \Lambda^{\tilde w}=\tilde \Lambda^{\tilde u\tilde v}\in \mathbf{Expl}( \tilde \Delta^{\tilde u})$, we can verify {that} $\mathcal V_i(\tilde \Lambda^{\tilde w})=P\mathcal V_i(\Lambda^{w})$, $\mathcal W_i(\tilde \Lambda^{\tilde w})=P\mathcal W_i(\Lambda^w)$, $\hat{\mathcal W}_i(\tilde \Lambda^{\tilde w})=P\hat{\mathcal W}_i(\Lambda^w)$. Therefore, in order to show {that} the relations of the subspaces {(as claimed in Proposition \ref{Pro:subspacesrelation}) hold}, {replacing $\Delta^u$ by  $\tilde \Delta^{\tilde u}$ makes no difference and thus we will assume that $\Delta^u$ is of the above form in what follows.} 
	
	The following {system}, denoted  {$\Lambda^w=\Lambda^{uv}$}, is a $(Q,v)$-explicitation of $\Delta^u$,
	\begin{align}\label{Eq:controlsys4}
	\Lambda^w=\Lambda^{uv}:\left\lbrace \begin{array}{c@{\ }l}
	\left[ {\begin{smallmatrix}
		\dot x_1\\
		\dot x_2\\
		\end{smallmatrix}}\right] &=\left[ {\begin{smallmatrix}
		H_1&H_2\\
		0&0\\
		\end{smallmatrix}} \right]\left[ {\begin{smallmatrix}
		x_1\\
		x_2
		\end{smallmatrix}} \right]+\left[ {\begin{smallmatrix}
		L_1\\
		0
		\end{smallmatrix}} \right]u+\left[ {\begin{smallmatrix}
		0\\
		I_{n-q}
		\end{smallmatrix}} \right]v \\ 
	y&=H_3x_1+H_4x_2+L_2u.
	\end{array} \right.
	\end{align}
	Firstly, we calculate $\mathcal V_i(\Lambda^w)$ through equation (\ref{Eq:barV}) of the Appendix:
	\begin{align*}
	\setlength{\arraycolsep}{1.5pt}
	{{\mathcal V}_{i + 1}}(\Lambda^w) &= {\left[ {\begin{smallmatrix}
			A\\
			C
			\end{smallmatrix}} \right]^{ - 1}}\left(  {\left[ {\begin{smallmatrix}
			I\\
			0
			\end{smallmatrix}}\right]  {{\mathcal V}_i}(\Lambda^w) + {\rm Im\,}\left[ {\begin{smallmatrix}
			{B^w}\\
			{D^w}
			\end{smallmatrix}} \right]}\right) = {\left[  {\begin{smallmatrix}
			H_1&H_2\\
			0&0\\
			H_3&H_4
			\end{smallmatrix}}  \right]^{ - 1}}\left( {\left[ {\begin{smallmatrix}
			{{\mathcal V}_i}(\Lambda^w)\\
			0
			\end{smallmatrix}} \right] + {\rm Im\,}\left[ {\begin{smallmatrix}
			L_1&0\\
			0&I_{n-q}\\
			L_2&0
			\end{smallmatrix}} \right]}\right) \\
	&={\left[  {\begin{smallmatrix}
			H_1&H_2\\
			H_3&H_4
			\end{smallmatrix}}  \right]^{ - 1}}\left(  {\left[ {\begin{smallmatrix}
			[I_q,0]	{{\mathcal V}_i}(\Lambda^w)\\
			0
			\end{smallmatrix}} \right] + {\rm Im\,}\left[ {\begin{smallmatrix}
			L_1&0\\
			L_2&0
			\end{smallmatrix}} \right]}\right) =H^{-1}(E{{\mathcal V}_i}(\Lambda^w)+{\rm Im\,}L).
	\end{align*}
	{Comparing} the above {expression} with equation (\ref{Eq:V}) of {the} Appendix, it is {easily} seen that the subspace sequences $\mathcal V_{i+1}(\Lambda^w)$ and $\mathscr V_{i+1}(\Delta^u)$ are calculated in the same  {way}. {Since} $\mathscr V_0(\Delta^u)={\mathcal V_0}(\Lambda^w)=\mathbb R^n$, we conclude that $\mathscr V_i(\Delta^u)=\mathcal V_i(\Lambda^w)$ for $i\in \mathbb N$.
	
	Then calculate $\mathscr W_{i+1}(\Delta^u)$ via equation (\ref{Eq:W}) of {the} Appendix:
	\begin{align*}
	\mathscr W_{i+1}(\Delta^u)&=E^{-1}(H\mathscr W_i(\Delta^u)+{\rm Im\,}L)={\left[  {\begin{smallmatrix}
			I_q&0\\
			0&0
			\end{smallmatrix}}  \right]^{ - 1}}\left(  \left[  {\begin{smallmatrix}
		H_1&H_2\\
		H_3&H_4
		\end{smallmatrix}}  \right]\mathscr W_i(\Delta^u)+{\rm Im\,} \left[ {\begin{smallmatrix}
		L_1\\
		L_2
		\end{smallmatrix}} \right]\right)  \\
	&={\left[  {\begin{smallmatrix}
			I_q&0\\
			0&0
			\end{smallmatrix}}  \right]^{ - 1}}\left(  \left[  {\begin{smallmatrix}
		H_1&H_2&L_1&0\\
		H_3&H_4&L_2&0
		\end{smallmatrix}}  \right]\left[ {\begin{smallmatrix}
		\mathscr W_i(\Delta^u)\\
		\mathscr U_w
		\end{smallmatrix}} \right]\right) \\&
	=\left[  {\begin{smallmatrix}
		H_1&H_2&L_1&0\\
		0&0&0&0
		\end{smallmatrix}}  \right]\left(  \left[ {\begin{smallmatrix}
		\mathscr W_i(\Delta^u)\\
		\mathscr U_w
		\end{smallmatrix}} \right]\cap \ker \left[  {\begin{smallmatrix}
		H_3&H_4&L_2&0
		\end{smallmatrix}}  \right] \right) +{\rm Im\,}\left[ {\begin{smallmatrix}
		0\\
		I_{n-q}
		\end{smallmatrix}} \right].
	\end{align*}
	In the above formula, according to the special form of $E$, we directly calculate the preimage. Moreover, we can express 
	\begin{align*}
	\left[ {\begin{smallmatrix}
		0\\
		I_{n-q}
		\end{smallmatrix}} \right]= \left[  {\begin{smallmatrix}
		0&0&0&0\\
		0&0&0&I_{n-q}
		\end{smallmatrix}}  \right]\left( \left[ {\begin{smallmatrix}
		\mathscr W_i(\Delta^u)\\
		\mathscr U_w
		\end{smallmatrix}} \right]\cap \ker \left[  {\begin{smallmatrix}
		H_3&H_4&L_2&0
		\end{smallmatrix}}  \right]\right).
	\end{align*}
	It follows that
	\begin{align*}
	\mathscr W_{i+1}(\Delta^u)&=\left[  {\begin{smallmatrix}
		H_1&H_2&L_1&0\\
		0&0&0&I_{n-q}
		\end{smallmatrix}}  \right]\left( \left[ {\begin{smallmatrix}
		\mathscr W_i(\Delta^u)\\
		\mathscr U_w
		\end{smallmatrix}} \right]\cap \ker \left[  {\begin{smallmatrix}
		H_3&H_4&L_2&0
		\end{smallmatrix}}  \right] \right) \\&=\left[ {\begin{smallmatrix}
		A&B^w
		\end{smallmatrix}} \right]\left( {\left[ \begin{smallmatrix}
		{{\mathscr{W}}}_i(\Delta^u)\\
		{\mathscr{U}_w}
		\end{smallmatrix} \right] \cap \ker \left[ \begin{smallmatrix}
		C&D^w
		\end{smallmatrix} \right]}\right) .
	\end{align*}
	It is seen from the above equation and (\ref{Eq:barW}) of Appendix that the subspace sequences $\mathcal W_{i+1}(\Lambda^w)$ and ${\mathscr W_{i+1}(\Delta^u)}$ are calculated in the same  {way}. Since the initial conditions  ${\mathcal W_{0}}(\Lambda^w)=\mathscr W_{0}(\Delta^u)=\{0\}$, we conclude that $\mathcal W_{i+1}(\Lambda^w)={\mathscr W_{i+1}(\Delta^u)}$ for all $i\in \mathbb N$.
	
	Then from (\ref{Eq:W}) and (\ref{Eq:Waddition}), it is seen that the subspaces sequences $\mathscr W_i$ and $\hat{\mathscr W}_i$ are calculated in the same form, their difference comes from their initial conditions only. Similarly, from (\ref{Eq:barW}) and (\ref{Eq:barWaddition}), it is seen that $\mathcal W_i$ and $\hat{\mathcal W}_i$ have different initial conditions but evolve in the same way. Thus, by $ {\hat{\mathcal W}_{1}}(\Lambda^w)={\hat{\mathscr W}_{1}(\Delta^u)}=\ker E={\rm Im\,} B^v$, we get  $ {\hat{\mathcal W}_{i}}(\Lambda^w)={\hat{\mathscr W}_{i}(\Delta^u)}$ for all $i\in \mathbb N^+$.	
\end{proof}
\subsection{{Proof of Proposition \ref{Pro:MTF}}}\label{ProofPro:MTF}
\begin{proof}
	Observe that the transformation matrix $T_s$ {decomposes} the state space $\mathscr X$ of $\Lambda^u$ into {$\mathscr X=\mathscr X_1\oplus\mathscr X_2\oplus\mathscr X_3\oplus\mathscr X_4$}, where $\mathscr X_1=\mathcal V^* \cap {\mathcal W^*}$, $\mathscr X_1\oplus \mathscr X_2=\mathcal V^*$, $\mathscr X_1\oplus \mathscr X_3=\mathcal W^*$, ${\left( {{\mathcal V^*} + {\mathcal W^*}} \right) \oplus  {\mathscr X_4} = {\mathscr X}}$. The transformation matrix $T_i$ decomposes the input space $\mathscr U_u$ into {$\mathscr U_u=\mathscr U_1\oplus\mathscr U_2$}, where $\mathscr U_1=\mathcal U_u^*$, $\mathscr U_1\oplus \mathscr U_2=\mathscr U_u$. The transformation matrix $T_o$ decomposes the output space $\mathscr Y$ into {$\mathscr Y=\mathscr Y_1\oplus\mathscr Y_2$}, where $\mathscr Y_1={\mathcal Y}^*$, $\mathscr Y_1\oplus \mathscr Y_2=\mathscr Y$. Let $\Lambda'=(A',B',C',D')=M_{tran}(\Lambda^u)$, where $M_{tran}$ is the Morse transformation $M_{tran}=(T_s,T_i,T_o,0,0)$. Then consider the following equation and subspaces:
	\begin{align*}
	\begin{smallmatrix}
	\left[ {\begin{smallmatrix}
		A'&B'\\
		C'&D'
		\end{smallmatrix}} \right]=\left[ {\begin{smallmatrix}
		{{T_s}}&0\\
		0&{{T_o}}
		\end{smallmatrix}} \right]\left[ {\begin{smallmatrix}
		A&B^u\\
		C&D^u
		\end{smallmatrix}} \right]\left[ {\begin{smallmatrix}
		{T_s^{ - 1}}&0\\
		0&{T_i^{ - 1}}
		\end{smallmatrix}} \right] = \left[ {\begin{smallmatrix}
		{A_1^1}&{A_1^2}&{A_1^3}&{A_1^4}&\vline& {B_1^1}&{B_1^2}\\
		{A_2^1}&{A_2^2}&{A_2^3}&{A_2^4}&\vline& {B_2^1}&{B_2^2}\\
		{A_3^1}&{A_3^2}&{A_3^3}&{A_3^4}&\vline& {B_3^1}&{B_3^2}\\
		{A_4^1}&{A_4^2}&{A_4^3}&{A_4^4}&\vline& {B_4^1}&{B_4^2}\\
		\hline \\
		{C_3^1}&{C_3^2}&{C_3^3}&{C_3^4}&\vline& {D_3^1}&{D_3^2}\\
		{C_4^1}&{C_4^2}&{C_4^3}&{C_4^4}&\vline& {D_4^1}&{D_4^2}
		\end{smallmatrix}} \right],&\begin{smallmatrix}
	{{\mathcal V}^*}(\Lambda'):\left[ {\begin{smallmatrix}
		*\\
		*\\
		0\\
		0
		\end{smallmatrix}} \right],\\
	{{\mathcal U_u}^*}(\Lambda'):\left[ {\begin{smallmatrix}
		*\\
		0
		\end{smallmatrix}} \right],
	\end{smallmatrix}&\begin{smallmatrix}
	{{\mathcal W}^*}(\Lambda'):\left[ {\begin{smallmatrix}
		*\\
		0\\
		*\\
		0
		\end{smallmatrix}} \right],\\
	{{\mathcal Y}^*}(\Lambda'):\left[ {\begin{smallmatrix}
		*\\
		0
		\end{smallmatrix}} \right].
	\end{smallmatrix}
	\end{smallmatrix}
	\end{align*}
	Now, {applying (\ref{Eq:barI}), for $i=n$, to both $\Lambda'$ and the dual system of $\Lambda'$} ( {see Appendix}), we have
	\begin{align*}
	{\left[ {\begin{smallmatrix}
			B'\\
			D'
			\end{smallmatrix}} \right]} {{\mathcal U_u}^*} \subseteq \left[ {\begin{smallmatrix}
		{{{\mathcal V}^*}}\\
		0
		\end{smallmatrix}} \right], {\kern 10pt}{\left[ {\begin{smallmatrix}
			(C')^T\\
			(D')^T
			\end{smallmatrix}} \right]} {({\mathcal Y}^*)^{\bot}} \subseteq \left[ {\begin{smallmatrix}
		({\mathcal W}^*)^{ \bot }\\
		0
		\end{smallmatrix}} \right].
	\end{align*}
	It follows that $B^1_3$, $B^1_4$, $C^1_4$, $C^3_4$, $D^1_3$, $D^1_4$, $D_2^4$ are all zero.
	
	Then  {applying (\ref{Eq:barV}) for $i=n$, to both $\Lambda'$ and its dual system,} we have
	\begin{align}
	{\left[ {\begin{smallmatrix}
			A'{{\mathcal V}^*}\\
			C'{{\mathcal V}^*}
			\end{smallmatrix}} \right]} &\subseteq  {\left[ {\begin{smallmatrix}
			{{\mathcal V}^*}\\
			0
			\end{smallmatrix}} \right] + {\mathop{\rm Im\,}\nolimits}\left[ {\begin{smallmatrix}
			{B'}\\
			{D'}
			\end{smallmatrix}} \right]},\label{barV*} 	
	\\ {\left[ {\begin{smallmatrix}
			(A')^T{({\mathcal W}^*)^{\bot}}\\
			(B')^T{({\mathcal W}^*)^{\bot}}
			\end{smallmatrix}} \right]}& \subseteq  {\left[ {\begin{smallmatrix}
			{({\mathcal W}^*)^{\bot}}\\
			0
			\end{smallmatrix}} \right] + {\mathop{\rm Im\,}\nolimits}\left[ {\begin{smallmatrix}
			(C')^T\\
			(D')^T
			\end{smallmatrix}} \right]}.\label{barW*} 
	\end{align}
	The lower parts of equations (\ref{barV*}) and (\ref{barW*}) give  $C'{{\mathcal V}^*}\subseteq {\mathop{\rm Im\,}\nolimits}D'$ and $(B')^T{({\mathcal W}^*)^{\bot}}\subseteq {\mathop{\rm Im\,}\nolimits}(D')^T$,  which implies that $B_2^1$ {and} $C_2^4$ are zero. On the other hand, equation (\ref{barV*}) gives that 
	\begin{align*}
	{\mathop{\rm Im\,}\nolimits} \left[ {\begin{smallmatrix}
		{A_3^1}\\
		{A_4^1}\\
		{C_3^1}
		\end{smallmatrix}} \right] \subseteq {\mathop{\rm Im\,}\nolimits} \left[ {\begin{smallmatrix}
		{B_3^2}\\
		{B_4^2}\\
		{D_3^2}
		\end{smallmatrix}} \right] \ \ \ {\rm and} \ \ \ {\mathop{\rm Im\,}\nolimits} \left[ {\begin{smallmatrix}
		{A_3^2}\\
		{A_4^2}\\
		{C_3^2}
		\end{smallmatrix}} \right] \subseteq {\mathop{\rm Im\,}\nolimits} \left[ {\begin{smallmatrix}
		{B_3^2}\\
		{B_4^2}\\
		{D_3^2}
		\end{smallmatrix}} \right],
	\end{align*}
	{implying} that there exist matrices $F_1\in \mathbb{R}^{m_3\times n_1} $ and $F_2\in\mathbb{R}^{m_3\times n_2} $ such that 
	\begin{align}\label{Eq:F12}
	\left[ {\begin{smallmatrix}
		{A_3^1}\\
		{A_4^1}\\
		{C_3^1}
		\end{smallmatrix}} \right] = -\left[ {\begin{smallmatrix}
		{B_3^2}\\
		{B_4^2}\\
		{D_3^2}
		\end{smallmatrix}} \right]F_1 \ \ \ {\rm and} \ \ \  \left[ {\begin{smallmatrix}
		{A_3^2}\\
		{A_4^2}\\
		{C_3^2}
		\end{smallmatrix}} \right] =  -\left[ {\begin{smallmatrix}
		{B_3^2}\\
		{B_4^2}\\
		{D_3^2}
		\end{smallmatrix}} \right]F_2.
	\end{align} 
	Then {setting} $F = \left[ {\begin{smallmatrix}
		0&0&0&0\\
		{{F_1}}&{{F_2}}&0&0
		\end{smallmatrix}} \right]$, we have
	$$
	\left[ {\begin{smallmatrix}
		{{T_s}}&0\\
		0&{{T_o}}
		\end{smallmatrix}} \right]\left[ {\begin{smallmatrix}
		A&B^u\\
		C&D^u
		\end{smallmatrix}} \right]\left[ {\begin{smallmatrix}
		{T_s^{ - 1}}&0\\
		{T_i^{ - 1}F}&{T_i^{ - 1}}
		\end{smallmatrix}} \right]= \left[ {\begin{smallmatrix}
		{A_1^1+B_1^2F_1}&{A_1^2+B_1^2F_2}&{A_1^3}&{A_1^4}&\vline& {B_1^1}&{B_1^2}\\
		{A_2^1+B_2^2F_1}&{A_2^2+B_2^2F_1}&{A_2^3}&{A_2^4}&\vline& 0&{B_2^2}\\
		0&0&{A_3^3}&{A_3^4}&\vline& 0&{B_3^2}\\
		0&0&{A_4^3}&{A_4^4}&\vline& 0&{B_4^2}\\
		\hline \\
		0&0&{C_3^3}&{C_3^4}&\vline& 0&{D_3^2}\\
		0&0&0&{C_4^4}&\vline& 0&0
		\end{smallmatrix}} \right].
	$$
	Since $\mathcal W^*$ is feedback invariant, equation (\ref{barW*}) also holds for the above transformed system. Thus the upper part of (\ref{barW*}) becomes
	\begin{align*}
	(A'+B'F)^T{({\mathcal W}^*(\Lambda'))^{\bot}}
	\subseteq  
	{({\mathcal W}^*(\Lambda'))^{\bot}}
	+ {\mathop{\rm Im\,}\nolimits}(C')^T,	
	\end{align*}
	which gives that $({A_2^1}+B_1^2F_1)^T=0$,
	\begin{align*}
	{\mathop{\rm Im\,}\nolimits} \left[ {\begin{smallmatrix}
		({A_2^3})^T\\
		({B_2^2})^T
		\end{smallmatrix}} \right] \subseteq {\mathop{\rm Im\,}\nolimits} \left[ {\begin{smallmatrix}
		({C_1^3})^T\\
		({D_1^2})^T
		\end{smallmatrix}} \right]\ \ \ {\rm and} \ \ \ {\mathop{\rm Im\,}\nolimits} \left[ {\begin{smallmatrix}
		({A_4^3})^T\\
		({B_4^2})^T
		\end{smallmatrix}} \right] \subseteq {\rm Im\,} \left[ {\begin{smallmatrix}
		({C_3^3})^T\\
		({D_3^2})^T
		\end{smallmatrix}} \right].
	\end{align*}
	It follows {that} there exist $K_1\in\mathbb{R}^{n_2\times p_3} $ and $K_2\in \mathbb{R}^{n_4\times p_3}$ such that 
	\begin{align}\label{Eq:K12}
	\left[ {\begin{smallmatrix}
		({A_2^3})^T\\
		({B_2^2})^T
		\end{smallmatrix}} \right]=-\left[ {\begin{smallmatrix}
		({C_3^3})^T\\
		({D_3^2})^T
		\end{smallmatrix}} \right]K_1^T\ \ \ {\rm and} \ \ \ \left[ {\begin{smallmatrix}
		({A_4^3})^T\\
		({B_4^2})^T
		\end{smallmatrix}} \right]=-\left[ {\begin{smallmatrix}
		({C_3^3})^T\\
		({D_3^2})^T
		\end{smallmatrix}} \right]K^T_2.
	\end{align}
	Let $K={\left[ {\begin{smallmatrix}
			0&{K_1^T}&0&{K_2^T}\\
			0&0&0&0
			\end{smallmatrix}} \right]^T}$, {which implies} that
	\begin{align*}
	\left[ {\begin{smallmatrix}
		{{T_s}}&KT_o\\
		0&{{T_o}}
		\end{smallmatrix}} \right]\left[ {\begin{smallmatrix}
		A&B\\
		C&D
		\end{smallmatrix}} \right]\left[ {\begin{smallmatrix}
		{T_s^{ - 1}}&0\\
		{T_i^{ - 1}F}&{T_i^{ - 1}}
		\end{smallmatrix}} \right]\!=\!\left[ {\begin{smallmatrix}
		{A_1^1+B_1^2F_1}&{A_1^2+B_1^2F_2}&{A_1^3}&{A_1^4}&\vline& {B_1^1}&{B_1^2}\\
		0&{A_2^2+B_2^2F_1}&0&{A_2^4+K_1C^4_1}&\vline& 0&{0}\\
		0&0&{A_3^3}&{A_3^4}&\vline& 0&{B_3^2}\\
		0&0&{0}&{A_4^4+K_2C_3^4}&\vline& 0&{0}\\
		\hline\\
		0&0&{C_3^3}&{C_3^4}&\vline& 0&{D_3^2}\\
		0&0&0&{C_4^4}&\vline& 0&0
		\end{smallmatrix}} \right].
	\end{align*}  
	Now it is seen that there exist $K_{MT}=T_s^{-1}KT_o$ and $F_{MT}=T_i^{-1}FT_s$ such that $\tilde \Lambda^{\tilde u}=(\tilde A, \tilde B^{\tilde u}, \tilde C, \tilde D^{\tilde u})$ has the form   (\ref{Eq:MTF}), where
	\begin{align*}
	\left[ {\begin{smallmatrix}
		{\tilde{A}}&{\tilde B^{\tilde u}}\\
		{\tilde{C}}&{\tilde D^{\tilde u}}
		\end{smallmatrix}} \right]=\left[ {\begin{smallmatrix}
		{{T_s}}&{{T_s}K_{MT}}\\
		0&{{T_o}}
		\end{smallmatrix}} \right]\left[ {\begin{smallmatrix}
		A&B^u\\
		C&D^u
		\end{smallmatrix}} \right]\left[ {\begin{smallmatrix}
		{{T_s}^{ - 1}}&0\\
		{F_{MT}{T_s}^{ - 1}}&{{T_i}^{ - 1}}
		\end{smallmatrix}} \right].
	\end{align*}
	The system matrices of $\tilde \Lambda^u$, see (\ref{Eq:MTF}), are $\tilde A_1=A_1^1+B_1^2F_1$, $\tilde A_1^2=A_1^2$, $\tilde A_1^3=A_1^3$, $\tilde A_1^4=A_1^4$, ${\tilde B_1=B_1^1}$, ${\tilde B_1^2=B_1^2}$, $\tilde A_2 =A_2^2+B_2^2F_1$, $\tilde A_2^4={A_2^4+K_1C^4_1}$, ${\tilde A_3=A_3^3}$, ${\tilde A_3^4=A_3^4}$, $\tilde B_3={B_3^2}$, $\tilde A_4={A_4^4+K_2C_1^4}$, $ {\tilde C_3}={C_3^3}$, $\tilde C_3^4={C_3^4}$, $\tilde D_3={D_3^2}$, $\tilde C_4={C_4^4}$.
	
	{Now} we {will} show {that} $(\tilde A_1, \tilde B_1)$ is controllable. By   Lemma 4 of \cite{molinari1978structural} \red{applied} to $\tilde \Lambda^{\tilde u}$, we get
	\begin{align}\label{WiV*_U*}
	{\mathcal W_i}(\tilde \Lambda^{\tilde u}|_{\mathcal U_u^*})=\mathcal W_i(\tilde \Lambda^{\tilde u})\cap \mathcal V^*(\tilde \Lambda^{\tilde u}),
	\end{align}
	where  {${\mathcal W_i}(\tilde \Lambda^{\tilde u}|_{\mathcal U_u^*})$}  \blue{denotes} the subspace ${\mathcal W_i}$  when {the} input is restricted to ${\mathcal U_u^*}$. Use \blue{the} system matrices (\ref{Eq:MTF}) to calculate  {${\mathcal W_i}(\tilde \Lambda^{\tilde u}|_{\mathcal U_u^*})$}  and $\mathcal W_i(\tilde \Lambda^{\tilde u})\cap \mathcal V^*(\tilde \Lambda^{\tilde u})$,  {which gives}
	\begin{align}\label{Eq:barWI*}
	 {{\mathcal W_n}(\tilde \Lambda^{\tilde u}|_{\mathcal U_u^*})}=\mathscr{B}_1+\tilde A_1\mathscr{B}_1+ \cdots +(\tilde A_1)^{n-1}\mathscr{B}_1\mathop=\limits^{(\ref{WiV*_U*})}\mathcal W_n(\tilde\Lambda^{\tilde u})\cap \mathcal V^*(\tilde\Lambda^{\tilde u}),
	\end{align}
	where $\mathscr{B}_1={\mathop{\rm Im\,}\nolimits} {[ {\begin{smallmatrix}
			{\tilde B_1}&0&0&0
			\end{smallmatrix}} ]^T}$.
	We can see from the above equation that the reachability space of $(\tilde A_1, \tilde B_1)$ is $\mathcal W^*(\tilde\Lambda^{\tilde u})\cap \mathcal V^*(\tilde\Lambda^{\tilde u})=\mathscr X_1$, which implies {that} $(\tilde A_1, \tilde B_1)$ is controllable. Since the proof of  the observability of $( \tilde C_4, \tilde A_4)$ is completely dual {to} the above proof, we omit  {that} part.
	
	Subsequently, we  prove {that} the system $\Lambda^3=(\tilde A_3,\tilde B_3,\tilde C_3,\tilde D_3)$, given by (\ref{Eq:MTF}), is prime. {Using} the system matrices of $\tilde \Lambda^{\tilde u}$ to calculate  {$	{\mathcal W^*}(\tilde \Lambda^{\tilde u}|_{(\mathcal U_u^*)^{\bot}})$}, we get
	\begin{align*}
{\mathcal W^*}(\tilde \Lambda^{\tilde u}|_{(\mathcal U_u^*)^{\bot}})=\mathscr R\times \left\lbrace 0\right\rbrace \times
	{\mathcal W^*}(\tilde \Lambda^{3})\times \left\lbrace 0\right\rbrace ,
	\end{align*}
	where  {$\mathscr R$} denotes \blue{a subspace whose explicit form is} irrelevant. From ${\mathcal W^*}(\tilde \Lambda^{\tilde u})={\mathcal W^*}(\tilde \Lambda^{\tilde u}|_{\mathcal U_u^*})\oplus{\mathcal W^*}(\tilde \Lambda^{\tilde u}|_{(\mathcal U_u^*)^{\bot}})$ and equation (\ref{Eq:barWI*}), we can deduce  that
	$
	{\mathcal W^*}(\tilde \Lambda^{3})=\mathscr X(\tilde \Lambda^{3})=\mathscr X_3(\tilde \Lambda^{\tilde u}).
	$
	Moreover, by {a} direct calculation, we get 
	\begin{align*}
	\mathcal Y^*(\tilde \Lambda^3)=\mathscr Y(\tilde \Lambda^3)=\tilde C_3{\mathcal W^*}(\tilde \Lambda^3)+\tilde D_3\mathscr U_w(\tilde \Lambda^3), \ \ \mathcal V^*(\tilde \Lambda^3)=0,  \ \ \mathcal U_u^*(\tilde \Lambda^3)=0.
	\end{align*}	
	Finally, by Theorem 10 of \cite{molinari1978structural}, we {conclude that} $\tilde \Lambda^3=(\tilde A_3,\tilde B_3,\tilde C_3,\tilde D_3)$ is prime.
\end{proof}
\subsection{{Proof of Proposition \ref{Pro:MMNF}}}\label{ProofPro:MMNF}
\begin{proof}
	First, by \textbf{MNF} Algorithm \ref{Alg:1} and {a direct calculation}, we have	
	\begin{align*}
	\begin{array}{ll}
	\bar A_1=\tilde A_1+\tilde B_1F^1_{MN},&\bar A_1^3=\tilde A_1^3+\tilde B^2_1F^2_{MN}+K^1_{MN}\tilde C_3+K^1_{MN}\tilde D_3F^2_{MN},\\
	\bar A_4= \tilde A_4+K^3_{MN}\tilde C_3^4,&\bar A_3=\tilde A_3+K^2_{MN}\tilde C_3+\tilde B_3F^2_{MN}+K^2_{MN}\tilde D_3F^2_{MN},\\\bar B_3=\tilde B_3+K^2_{MN}\tilde D_3, &
	\bar A_1^4=\tilde A_1^4+\tilde B^2_1F^3_{MN}+K^1_{MN}\tilde C_3+K^1_{MN}\tilde D_3F^3_{MN},\\
	\bar B_1^2=\tilde B_1^2+K^1_{MN}\tilde D_3,&\bar A_3^4=\tilde A_3^4+\tilde B_3F^3_{MN}+K^2_{MN}\tilde C^4_3+K^2_{MN}\tilde D_3F^3_{MN},\\
	\bar C_3=\tilde C_3+\tilde D_3F^2_{MN},&\bar C_3^4=\tilde C_3^4+\tilde D_3F^3_{MN}.
	\end{array} 	\end{align*}		
	{We will show that we can always assume $\tilde D_3=0$. To this end, we can find  a change of coordinates in the input and output spaces to obtain $\tilde D_3=\left[ {\begin{smallmatrix}
			0&0\\
			0&{{I_\delta }}
			\end{smallmatrix}} \right]$.} Then by suitable choice of feedback and output injection transformation, the 5-tuple $(\tilde B_1^2,\tilde B_3,\tilde C_3,\tilde C_3^4,\tilde D_3)$  {can be brought into the following form:}
	\begin{align*}
	\left[ {\begin{smallmatrix}
		{*}&{*}&\vline& {\tilde B^2_1}\\
		{*}&{*}&\vline& {\tilde B_3}\\
		\hline \\
		{\tilde C_3}&{\tilde C_3^4}&\vline& {\tilde D_3}
		\end{smallmatrix}} \right]	\Rightarrow\left[ {\begin{smallmatrix}
		{*}&{*}&\vline& {\hat B^2_1}&0\\
		{*}&{*}&\vline& {\hat B_3}&0\\
		\hline \\
			{\hat C_3}&{\hat C_3^4}&\vline& 0& 0\\
		0&0&\vline& 0& I_{\delta}	
		\end{smallmatrix}} \right].
	\end{align*}
 The zero columns of $\hat B$ and the zero rows of $\hat C$ which correspond to the static relations $y_i=u_i$, $1\le i\le\sigma$, we will {be kept} unchanged. Now, by neglecting the zero columns of $\hat B$ and the zero rows of $\hat C$, we \blue{may assume that}
		$$
		\left[ {\begin{smallmatrix}
		{*}&{*}&\vline& {\tilde B^2_1}\\
		{*}&{*}&\vline& {\tilde B_3}\\
		\hline \\
		{\tilde C_3}&{\tilde C_3^4}&\vline& \tilde D_3
		\end{smallmatrix}} \right]=	\left[ {\begin{smallmatrix}
			{*}&{*}&\vline& {\hat B^2_1}\\
			{*}&{*}&\vline& {\hat B_3}\\
			\hline \\
			{\hat C_3}&{\hat C_3^4}&\vline&0
			\end{smallmatrix}} \right],
		$$
i.e., $\tilde D_3$-matrix is $\hat D_3=0$.	
	
	Now {with} the assumption $\tilde D_3=0$, we show that the constrained Sylvester equations of (\ref{Eq:CSlyvester}) can be reduced to normal Sylvester equations by {a} suitable choice of $F_{MN}$ and $K_{MN}$. We claim that the following matrix equation 
	\begin{align}\label{Eq:hatG2}
	\tilde B^2_1=-\hat T^2_{MN}\tilde B_3
	\end{align}
	is solvable for $\hat T^2_{MN}$. This claim can be proved by  {observing that}
	\begin{align}\label{Eq:barIbot}
	\left[ \begin{smallmatrix}
	\tilde B^{\tilde u} (\mathcal U_u^*)^{\bot}\\
	\tilde D^{\tilde u} (\mathcal U_u^*)^{\bot}
	\end{smallmatrix}\right] \cap \left[ \begin{smallmatrix} 
	\mathcal V^*\\
	0 
	\end{smallmatrix}\right] =0.
	\end{align} 
	Note that the above equation is {a consequence of} the definition of $\mathcal U_u^*$ (see equation (\ref{Eq:barI})). Now by (\ref{Eq:barIbot}), we have
	\begin{align*}
	{\rm Im\,}  ({\rm col}\left[ \begin{smallmatrix}
	\tilde B_1^2& 0&\tilde B_3&0&\tilde D_3&0
	\end{smallmatrix}\right])\cap \left[ \begin{smallmatrix} 
	\mathcal V^*\\
	0 
	\end{smallmatrix}\right] =0.
	\end{align*}	
	Since $\tilde D_3$ is already zero, the above equation  {implies} that (\ref{Eq:hatG2}) is solvable for $\hat T^2_{MN}$. Consequently, substitute 	(\ref{Eq:hatG2}) into the upper equations of (\ref{Eq:CSlyvester}) {and} we get
	\begin{align}\label{Eq:Csylvester2}
	\bar A_1\bar T^2_{MN} - \bar T^2_{MN}\bar A_3 =  - \bar A_1^3+\bar A^1\hat T^2_{MN}-\hat T^2_{MN}\bar A_3,\ \ \  \bar T^2_{MN}\bar B_3=0,
	\end{align}
	where $\bar T^2_{MN}=T^2_{MN}+\hat T^2_{MN}$. 
	
	Furthermore, {since} $(\tilde A_3,\tilde B_3,\tilde C_3,\tilde D_3)$ is prime ( {a consequence} of Proposition \ref{Pro:MTF}), we can always assume $\tilde B_3=[I_{m_3},0]^T$ and $\tilde C_3=[I_{p_3},0]$ (if not, use coordinates transformations such that $\tilde B_3$ and $\tilde C_3$ are of that form), where $m_3={\rm rank\,}\tilde B_3=\dim\, (\mathcal U_u^*)^{\bot}=p_3={\rm rank\,}\tilde C_3=\dim\, { \mathcal Y}^*$ . Then, it is possible to choose $K^1_{MN}$, $K^2_{MN}$, $F^2_{MN}$ such that the 4-tuple ($\bar A^3_1,\bar A_3,\bar B_1^2,\bar C_3$) {is transformed into} the following form:
	\begin{align*}
	\left[ {\begin{smallmatrix}
		{\bar A_1^{3}}&\vline&\\
		\hline \\
		{\bar A_3}&\vline& {\bar B_3}\\
		\hline \\
		{\tilde C_3}&\vline& 
		\end{smallmatrix}} \right]=\left[ {\begin{smallmatrix}
		0&{\bar A_1^{3'}}&\vline& \\
		\hline \\
		0&0&\vline& I_{m_3}\\
		0&{\bar A_3'}&\vline& 0\\
		\hline \\
		I_{p_3}&0&\vline& 
		\end{smallmatrix}} \right].
	\end{align*}
	Thus $\bar T^2_{MN}$ in equation (\ref{Eq:Csylvester2}) {is} of the form $\bar T^2_{MN}=[ {\begin{smallmatrix}
		0&\hat T^2_{MN}\\
		\end{smallmatrix}}]$  {because}  $\bar T^2_{MN}\bar B_3=0$. Hence, solving $\bar T^2_{MN}$ {via} equation (\ref{Eq:Csylvester2}) is equivalent to solving $\hat T^2_{MN}$ {via}
	\begin{align*}
	\bar A_1\left[  {\begin{smallmatrix}
		0&\hat T^2_{MN}\
		\end{smallmatrix}}\right] - \left[  {\begin{smallmatrix}
		0&\hat T^2_{MN}\\
		\end{smallmatrix}}\right] \left[ {\begin{smallmatrix}
		0&0\\
		0&\bar A_3'
		\end{smallmatrix}}\right]=  \left[ {\begin{smallmatrix}
		0&{\hat A_1^{3'}}\end{smallmatrix}}\right].
	\end{align*}
	Therefore, the upper part of {the} constrained Sylvester equations of (\ref{Eq:CSlyvester}) can be reduced to the above normal Sylvester equation. The reduction
	of {the} lower part of (\ref{Eq:CSlyvester}) to a normal Sylvester equation follows dually from the above result {and} we will omit that proof.
	
	Moreover, from Proposition \ref{Pro:MTF}, we have that the pair $(\tilde A_1, \tilde B_1)$ is controllable {and} the pair $(\tilde C_4,\tilde A_4)$ is observable. By {the} standard matrix theory, we can choose $F_{MN}$ {and} $K_{MN}$ such that the  {spectra} of $\bar A_1$, $\bar A_2$, $\bar A'_3$ ,and $\bar A_4^4$ are  {mutually} disjoint ( {that of $A_2$ is fixed but, the three others can be made arbitrary}). Then there exist unique solutions for $T^1_{MN}$, $T^2_{MN}$, $T^3_{MN}$, $T^4_{MN}$, $T^5_{MN}$ in (\ref{Eq:Slyvester}) and (\ref{Eq:CSlyvester}).  Furthermore, it is not hard to see that the state coordinates transformation matrix  {$T_{MN}$} brings $\tilde \Lambda^{\tilde u}$ {into} $\bar \Lambda^{\bar u}$. Feedback transformations preserve controllability, so  the controllability of $(\tilde A_1, \tilde B_1)$ implies the controllability of $(\bar A_1, \bar B_1)$; output injection preserves observability, so the observability of $(\tilde C_4,\tilde A_4)$  implies the observability of $(\bar C_4,\bar A_4)$.  The fact that the 4-tuple $(\bar A_3,\bar B_3,\bar C_3,\bar D_3)$ is prime is inherited from the fact that $(\tilde A_3,\tilde B_3,\tilde C_3,\tilde D_3)$ is prime since $(\tilde A_3,\tilde B_3,\tilde C_3,\tilde D_3)\mathop  \sim \limits^{M  }(\bar A_3,\bar B_3,\bar C_3,\bar D_3)$  (see  this property of prime systems in \cite{molinari1978structural}).
\end{proof}
\subsection{{Proofs of Theorem \ref{Thm:EMTF} and Theorem \ref{Thm:EMNF}}}\label{ProofThm:EMTFNF}
\begin{proof}[Proof of Theorem \ref{Thm:EMTF} ]
	Recall Remark \ref{rem:EM-equi}(iii) {that} there exists  an extended Morse transformation $EM_{tran}$ such that $\tilde \Lambda^{\tilde u \tilde v}=EM_{tran}(\Lambda^{uv})$ {is} of the \textbf{EMTF} if and only if there exists a Morse transformation $M_{tran}$ with a triangular ({and not just any}) input coordinates transformation bringing  $\Lambda^w_{n,m+s,p}=(A,B^w,C,D^w)$ into the \textbf{MTF}. Now we use the result of Proposition \ref{Pro:MTF}  for $\Lambda^w$ {with} a more subtle way to construct the input coordinates transformation matrix $T_w$.  {More specifically, set $T_x=T_s$, $T_y=T_o$, $F_w=F_{MT}$, $K_w=K_{MT}$ as in Proposition \ref{Pro:MTF} and define}
		\begin{align}\label{Eq:emtftm}
	T_w = [ {\begin{smallmatrix}
		{{T_u^1}}&{{T_u^3}}&T_v^1&T_v^3
		\end{smallmatrix}} ]^{-1}\in \mathbb{R}^{(m+s)\times (m+s)},
	\end{align}
where	   $T_u^1\in\mathbb{R}^{(m+s)\times m_1}$, $T_u^3\in\mathbb{R}^{(m+s)\times m_3}$, $T_v^1\in\mathbb{R}^{(m+s)\times s_1}$, $T_v^3\in\mathbb{R}^{(m+s)\times s_3}$ with $m_1+m_3=m$, $s_1+s_3=s$ are full rank matrices such that 
	\begin{align*}
	\begin{array}{ll}
	{\rm Im\,} {T_v^1} = \mathcal U^*_v,&	{\rm Im\,} {T_v^1} \oplus {\rm Im\,} {T_v^3} = { \mathscr U}_v,\\
	{\rm Im\,} {T_u^1} \oplus {\rm Im\,} {T_v^1} =\mathcal U_{uv}^*= \mathcal U_w^* ,&	{\rm Im\,} {T_u^1} \oplus{\rm Im\,} {T_u^3}\oplus{\rm Im\,} {T_v^1}\oplus{\rm Im\,} {T_v^3}= {\mathscr U}_{uv}={\mathscr U}_{w},
	\end{array} 
	\end{align*}
	 {where $\mathcal U_v^*$ is $\mathcal U_{uv}^*$ when the input {$w=[u^T \ v^T]^T$} is {restricted} to $v$ {(i.e., we put $u=0$)}. Notice that $T_w$ has a triangular form since ${\rm Im\,} {T_v^1} \oplus {\rm Im\,} {T_v^3} = { \mathscr U}_v$ and thus preserves $\mathscr U_u$.} Now the Morse transformation $M_{trans}=(T_x,T_w,T_y,F_{w},K_{w})$ brings $\Lambda^w$ into the desired form of (\ref{Eq:EMTF}). Hence, it proves that there exists an $EM_{tran}$ transforming $\Lambda^{uv}$ {into} the  \textbf{EMTF}. The claims that  $(\tilde A_{1}, \tilde B^{\tilde w}_{1})$ is controllable, $(\tilde C_{4},\tilde A_{4})$ is observable and $(\tilde A_{3},\tilde B^{\tilde w}_{3},\tilde C_{3},\tilde D^{\tilde w}_{3})$ is prime are inherited from the corresponding results of Proposition~\ref{Pro:MTF}.
\end{proof}
\begin{proof}[Proof of Theorem \ref{Thm:EMNF}]
There exists  an $EM_{tran}$ such that $\bar \Lambda^{\bar u\bar v}=EM_{tran}(\tilde \Lambda^{\tilde u \tilde v})$ is in the \textbf{EMNF} if and only if there exists a Morse transformation $M_{tran}$ with a triangular   input transformation matrix $T_w$ bringing {the} system $\tilde \Lambda^{\tilde w}$,  {given in \textbf{MTF},} into the \textbf{MNF}. Then as shown in Proposition \ref{Pro:MMNF}, the input coordinates transformation matrix of the Morse transformation, which brings the \textbf{MTF} into the \textbf{MNF}, is  {the identity matrix, thus  triangular, as we need}. Therefore, with the transformation matrices shown in Proposition \ref{Pro:MMNF}, we can always bring $\tilde \Lambda^{\tilde w}$ into the \textbf{EMNF}. Moreover,  the claims that $(\bar A_{1}, \bar B^{\bar w}_{1})$ is controllable,  $(\bar C_{4},\bar A_{4})$ is observable, $(\bar A_{3},\bar B^{\bar w}_{3},\bar C_{3},\bar D^{\bar w}_{3})$ is prime  {follow from} the corresponding results of Proposition~\ref{Pro:MMNF}.
\end{proof}
\subsection{{Proof of Theorem \ref{Thm:EMCF}}}\label{ProofThm:EMCF}
\begin{proof}
	By Theorem \ref{Thm:EMNF}, for a given ODECS $\Lambda^{uv}_{n,m,s,p}=(A,B^u,B^v,C,D^u)$, there exists an extended Morse transformation $EM_{tran}$ such that $EM_{tran}(\Lambda^{uv})$ is {in} the \textbf{EMNF}. Therefore, the starting point of this proof is the \textbf{EMNF} {given by} (\ref{Eq:EMNF}). Since the system { represented in the \textbf{EMNF}} is already decoupled {into} four independent subsystems, we only need to transform each subsystem into its {corresponding} canonical form.
	
	(i) We will prove that any controllable $\Lambda^{uv}_{n,m,s}=(A,B^u,B^v)$ can be transformed into the Brunovsk{\' y} canonical form with indices $(\epsilon_1,\ldots,\epsilon_m)$ and $(\bar \epsilon_1,\ldots,\bar \epsilon_s)$, then  the transformation from $(\bar A_1,\bar B^u_1,\bar B^v_1)$ to $ \left(\left[ {\begin{smallmatrix}
		A^{cu}&0\\
		0&A^{cv}
		\end{smallmatrix}} \right],\  \left[ {\begin{smallmatrix}
		B^{cu}\\
		0
		\end{smallmatrix}} \right], \ \left[ {\begin{smallmatrix}
		0\\
		B^{cv}
		\end{smallmatrix}} \right] \right) $ is straightforward to see.
	Since $ \Lambda^{uv}=(A,B^u,B^v)$ is a control system without output, in view of the extended Morse equivalence of Definition \ref{Def:EM-equi}, we just need to prove {that} there exist transformation matrices $T_x$, $T_u$, $T_v$, $F_u$, $F_v$, $R$ such that the transformed system matrices 
	\begin{align*}
	\left( T_x\left(A+ B^uF_u+B^v\left( F_v+RF_u\right)  \right)T^{-1}_x, T_x\left( B^u+ B^vR \right)T^{-1}_u, T_xB^vT^{-1}_v  \right)
	\end{align*}
	are {in} the Brunovsk{\'y} canonical form ({notice a triangular form of input transformation acting on $[B^u \ B^v]$}). First, from the classical linear system theory (see, e.g., \cite{brunovsky1970classification}), {using only a} state coordinates transformation and state feedback, i.e., {choosing} suitable $T_x$,  $F_v$, $F_u$,  and {setting} $T_u=I_{m}$, $T_v=I_s$, $R=0$, we can transform $\Lambda^{uv}$ into the following form:
	\begin{align}\label{Eq:chainform}
	\left\lbrace \begin{array}{c@{\ }lr}
	\dot x^j_i&=x^{j+1}_i, \ \ \ \ 1\le i \le m+s, \ \ 1\le j \le \kappa_i-1,\\
	\dot x_i^{\kappa_i}&=b^1_iu_1+\cdots+b^m_iu_m+\bar b^1_iv_1+\cdots+\bar b^s_iv_s, \ \ \ \ 1\le i \le m+s.
	\end{array}\right.
	\end{align}
	Moreover, without loss of generality, we assume ${\rm rank\,}B^w=m+s$ (if not, we can always permute the variables of $u$ and $v$ such that the first $m_1$ columns of $B^u$ and the first $s_1$ columns of $B^v$ are independent, where $m_1={\rm rank\,}B^u$ and $s_1={\rm rank\,}B^v$, then we will work with the matrices with these independent columns  {only, the remaining ones being zero by suitable transformations  $T_u$ and $T_v$}).   {Thus the matrix $\Gamma=[\Gamma_u \ \Gamma_v]$, where $\Gamma_u=(b^l_i)$ and  $\Gamma_v=(\bar b^{\bar l}_i)$, where $1\le i\le m+s$, $1\le l \le m$ and $1\le \bar l \le s$, is invertible.} Then we  suppose that the controllability indices $\kappa_i$ satisfy
	$$
	\kappa_1\ge \kappa_2\ge\dots \ge \kappa_{m+s}\ge1.
	$$
%
 Note that in the case of the Brunovsk{\'y} form for classical ODECS (with one kind of inputs), we {could} use $T_w=\Gamma$  as an input coordinates transformation matrix. However, $\Delta^{uv}$ has two kinds of inputs {and} the input coordinates transformation matrix should have a triangular form (see Remark \ref{rem:EM-equi}(ii)). In order to have such an input coordinates transformation matrix, we implement the following procedure.
%
%
\\Step $i=1$: two cases are possible: either for all $1\le j \le s$, we have $\bar b_1^j=0$ or there exists $1\le j\le s$ such that $\bar b_1^j\neq0$. In the first case, by the invertibility of $\Gamma$, there exists $1\le j\le m$ such that $b_1^j\neq0$. We assume $b^1_1\neq 0$ (if not, we permute the $u_j$'s), set $\ell_1=1$, $\epsilon_1=\kappa_1$, and $\bar \ell_1=0 $ and define
\begin{align*}
\left\lbrace \begin{array}{l}
 \tilde u_1=b^1_1u_1+\dots+b^m_1u_m\\ \tilde u_j=u_j, \ 2\le j\le m
\end{array}\right., \ \  \ \
\tilde v_j=v_j, \  1\le j\le s,
\end{align*}	
the system becomes (we delete "tildes" over $u_j$ and $v_j$)
$$
	\left\lbrace \begin{array}{c@{\ }l}
\dot x^j_i&=x^{j+1}_i,  \ \ 1\le i \le m+s,\ \   1\le j \le \kappa_i-1,\\
\dot x^{\epsilon_1}_1&=  u_1,\\
\dot x_i^{\kappa_i}&=b_i^1  u_1+\dots+b_i^m  u_m+\bar b_i^1v_1+\dots+\bar b_i^sv_s, \ \ 2\le i\le m+s.
\end{array}\right.
$$
In the second case, assume $\bar b^1_1\neq0$ (if not, we permute the $v_j$'s),   set $\bar \ell_1=1$, $\bar \epsilon_1=\kappa_1$, and $\ell_1=0$, and  {define
$$	
\left\lbrace \begin{array}{ll}
\tilde v_1=b^1_1u_1+\dots+b^m_1u_m+\bar b^1_1v_1+\dots+\bar b^m_1v_s,\\
\tilde v_i=v_i, \ \ 2 \le i \le s,
\end{array}\right. 
$$
and we get}
$$
\left\lbrace \begin{array}{c@{\ }l} 
\dot {\bar x}^{\kappa_1}_1&= \tilde v_1,\\
\dot x_i^{\kappa_i}&=\tilde b_i^1 u_1+\dots+\tilde b_i^m u_m+\tilde  {\bar b}_i^1\tilde v_1+\tilde  {\bar b}_i^2\tilde v_2+\dots+\tilde {\bar b}_i^s\tilde v_s, \ \ 2\le i\le m+s.
\end{array}\right.
$$
Set
$$
\left\lbrace \begin{array}{l}
\bar x^j_1=x_1^j, \ \ 1\le j \le \bar \epsilon_1,\\
\tilde x^j_i=x^j_i-\tilde {\bar b}^1_ix^{\kappa_1-\kappa_i+j}_1, \ \ 2\le i \le m+s,\ \ 1\le j \le   \kappa_i, 
\end{array}\right. 
$$
to get (we delete "tildes" over $x_i$,   $v_j$,  {$b_i$ and $\bar {b}_i$})
$$
\left\lbrace \begin{array}{c@{\ }l}
\dot x^j_i&=x^{j+1}_i, \ \ 1\le i \le m+s,\ \   1\le j \le \kappa_i-1, \\
\dot {\bar x}^{\bar \epsilon_1}_1&= v_1,\\
\dot x_i^{\kappa_i}&=b_i^1 u_1+\dots+b_i^m u_m+0+\bar b_i^2v_2+\dots+\bar b_i^sv_s, \ \ 2\le i\le m+s.
\end{array}\right.
$$
Step $i=k+1$: Assume that after $k$ steps, we have defined $\ell_k$ and $\epsilon_i$, for $1\le i\le \ell_k$, as well as $\bar \ell_k$ and $\bar \epsilon_i$  {for $1\le i\le\bar \ell_k$}, such that $\ell_k+\bar \ell_k=k$, and the system reads ( {the term ``0'' is to indicate that $v_1,\dots, v_{\bar \ell_k}$ are missing}) 
\begin{equation*}
\left\lbrace \begin{array}{l}
\begin{array}{c@{\ }ll}
\dot x^j_i&=x^{j+1}_i, & 1\le i \le \ell_k,\ \   1\le j \le \epsilon_i-1, \\
\dot x^{\epsilon_i}_i&=u_i, &1\le i \le \ell_k, \\
\dot {\bar x}^j_i&=\bar x^{j+1}_i, & 1\le i \le \bar \ell_k, \ \   1\le j \le \bar \epsilon_i-1, \\
\dot {\bar x}^{\bar \epsilon_i}_1&= v_i,& 1\le i \le \bar \ell_k,\\
\dot x^j_i&=x^{j+1}_i, & k+1\le i \le m+s, \ \   1\le j \le \kappa_i-1, \\
\end{array}\\
\ \ \dot x_i^{\kappa_i}=b_i^1 u_1+\dots+b_i^m u_m+0+\bar b_i^{\bar \ell_k+1}v_{\bar \ell_k+1}+\dots+\bar b_i^sv_s, \ \ k+1\le i\le m+s.
\end{array} \right.
\end{equation*}
Then two cases are possible, either for all $\bar \ell_k+1\le j \le s$, we have $\bar b^j_{k+1}=0$ or there exists $\bar \ell_k+1 \le j \le s$ such that $\bar b^j_{k+1}\ne0$. In the first case, set $\ell_{k+1}=\ell_k+1$, $\epsilon_{\ell_{k+1}}=\kappa_{k+1}$, $\bar \ell_{k+1}=\bar \ell_k$ and set
$$
\left\lbrace \begin{array}{l}
\tilde u_j=b^1_{k+1}u_1+\dots+b^m_{k+1}u_m, \ \ j=\ell_{k+1},\\ 
\tilde u_j=u_j, \ \ \ell_{k+1}+1\le j\le m,\\
\tilde v_j=v_j, \ \ 1\le j\le s,
\end{array}\right. 
$$
 {which is well-defined because, by controllability, at least one $b^j_{k+1}\ne0$, for $j>\ell_k$.} We get  (we delete "tildes" over $x_i$, $u_j$ and $v_j$)
$$
\left\lbrace \begin{array}{l}
\begin{array}{c@{\ }ll}
\dot x^j_i&=x^{j+1}_i,&1\le i \le \ell_{k+1}, \ \   1\le j \le \epsilon_i-1, \\
\dot x^{\epsilon_i}_i&=u_i, &1\le i \le \ell_{k+1} \\
\dot {\bar x}^j_i&=\bar x^{j+1}_i, &1\le i \le \bar \ell_{k+1}=\bar \ell_k,\ \   1\le j \le \bar \epsilon_i-1, \\
\dot {\bar x}^{\bar \epsilon_i}_1&= v_i,&1\le i \le \bar \ell_{k+1}=\bar\ell_k,\\
\dot x^j_i&=x^{j+1}_i, & k+2\le i \le m+s, \ \   1\le j \le \kappa_i-1, \\
\end{array}\\
\ \ \dot x_i^{\kappa_i}=b_i^1 u_1+\dots+b_i^m u_m+0+\bar b_i^{\bar \ell_k+1}v_{\bar \ell_k+1}+\dots+\bar b_i^sv_s, \ \ k+2\le i\le m+s.
\end{array}\right. 
$$
In the second case, assume \blue{$\bar b^{\bar \ell_{k}+1}_{k+1}\neq0$} (if not, we permute the $v_j$'s), set $\bar \ell_{k+1}=\bar \ell_k+1$, $\bar \epsilon_{\bar \ell_{k+1}}=\kappa_{k+1}$, and $\ell_{k+1}=\ell_k$, and define
$$
\left\lbrace \begin{array}{l}
	\tilde v_j=b^1_{k+1}u_1+\dots+b^m_{k+1}u_m+b_{k+1}^{\bar \ell_k+1}v_{\bar \ell_k+1}+\dots+\bar b_{k+1}^sv_s, \ \ j=\bar \ell_{k+1},\\ 
\tilde v_j=v_j, \ \ j\ne\ell_{k+1},
\end{array}\right. 
$$
 {we get
$$
\left\lbrace \begin{array}{c@{\ }l} 
\dot {\bar x}^{\kappa_{k+1}}_{k+1}&= \tilde v_{\bar \ell_{k+1}},\\
\dot x_i^{\kappa_i}&=\tilde b_i^1 u_1+\dots+\tilde b_i^m u_m+\tilde  {\bar b}_i^1\tilde v_1+\tilde  {\bar b}_i^2\tilde v_2+\dots+\tilde {\bar b}_i^s\tilde v_s, \ \ k+1\le i\le m+s.
\end{array}\right.
$$}
Set
$$	
\left\lbrace \begin{array}{l}
	\tilde x^j_i=x^j_i-\tilde {\bar b}^{\bar \ell_{k+1}}_ix^{\kappa_{k+1}-\kappa_i+j}_{k+1}, \ \ k+2\le i \le m+s,\ \ 1\le j \le \bar \kappa_i\\
	\bar x^j_i=x^j_{k+1}, \ \ i=\bar \ell_{k+1}, \ \ 1\le j\le \bar \epsilon_{\bar \ell_{k+1}},
\end{array}\right. 
$$
to get (we delete "tildes" over $x_i$, $v_j$, $b_i$, $\bar b_i$)
$$
\left\lbrace 
\begin{array}{l}
\begin{array}{c@{\ }ll}
\dot x^j_i&=x^{j+1}_i,& 1\le i \le \ell_{k+1}=\ell_k, \ \   1\le j \le \epsilon_i-1,\\
\dot x^{\epsilon_i}_i&=u_i, & 1\le i \le \ell_{k+1}=\ell_k, \\
\dot {\bar x}^j_i&=\bar x^{j+1}_i, &1\le i \le \bar \ell_{k+1},\ \  1\le j \le \bar \epsilon_i-1, \\
\dot {\bar x}^{\bar \epsilon_i}_1&= v_i,&1\le i \le \bar \ell_{k+1},\\
\dot x^j_i&=x^{j+1}_i, & k+2\le i \le m+s,\ \   1\le j \le \kappa_i-1,\\
\end{array}\\
\ \ \dot x_i^{\kappa_i}=b_i^1 u_1+\dots+b_i^m u_m+0+\bar b_i^{\bar \ell_{k+1}+1}v_{\bar \ell_{k+1}+1}+\dots+\bar b_i^sv_s, \ \ k+2\le i\le m+s.
\end{array}\right.
$$
After $m+s$ steps, we have  {$\ell_{m+s}=m$ and $\bar \ell_{m+s}=s$} and we get the Brunovsk{\'y} canonical form  of $\Lambda^{uv}$ with indices $(\epsilon_1,\ldots,\epsilon_m)$ and $(\bar \epsilon_1,\ldots,\bar \epsilon_s)$:
$$
\left\lbrace \begin{array}{c@{\ }l}
\dot x^j_i&=x^{j+1}_i, \ \   1\le j \le \epsilon_i-1, \ \ 1\le i \le \ell_{m+s}=m,\\
\dot x^{\epsilon_i}_i&=u_i, \ \ 1\le i \le \ell_{m+s}=m, \\
\dot {\bar x}^j_i&=\bar x^{j+1}_i, \ \   1\le j \le \bar \epsilon_i-1, \ \ 1\le i \le \bar \ell_{m+s}=s,\\
\dot {\bar x}^{\bar \epsilon_i}_1&= v_i,\ \ 1\le i \le \bar \ell_{m+s}=s.
\end{array}\right.
$$
	
	(ii) {The $A^{nn}$-matrix (corresponding to the uncontrollable and unobservable system) is} $A^{nn}=\bar A_{2}$.
	
	(iii) First,  we can find a Morse transformation  $M^1_{tran}$ with {a} triangular $T_w$ such that
	\begin{align*}
	M^1_{tran}\left( {\begin{smallmatrix}
		{\bar A_{3}}&\vline& \bar B^u_{3}&\vline& \bar B^v_{3}\\
		\hline \\
		\bar C_{3}&\vline& \bar D^u_{3}&\vline& {}
		\end{smallmatrix}} \right)=\left( {\begin{smallmatrix}
		{ A_p}&\vline& B^u_p&0&\vline& B^v_p\\
		\hline \\
		{C_p}&\vline& 0&0&\vline& {}\\
		0&\vline& 0&I_{\delta}&\vline& {}
		\end{smallmatrix}} \right).
	\end{align*}
	Since $(\bar A_{3},\bar B^w_{3},\bar C_{3},\bar D^w_{3})$ is prime, by Theorem 10 of \cite{molinari1978structural}, $(A_p, B^w_p,C_p)$  enjoys the properties:
	\begin{align}
	&\mathcal V^*(A_p,B^w_p,C_p)=0,  \ \ \  \mathcal U_w^*(A_p,B^w_p,C_p)=0. \label{Eq:VI=0} \\
	&\mathcal W^*(A_p,B^w_p,C_p)=\mathbb{R}^{n_3}, \ \  \  { \mathcal Y}^*(A_p,B^w_p,C_p)=\mathscr Y. \label{Eq:WO=R}
	\end{align}
	A little thought (or see Lemma 2 of \cite{molinari1978structural}) and equation (\ref{Eq:VI=0}) give that $\left[ {\begin{smallmatrix}
		{ A_p}& B^w_p\\
		{C_p}& 0
		\end{smallmatrix}} \right]$ is of full column rank. Then by  $\mathcal V^*(A_p,B^w_p,C_p)=(\mathcal W^*((A_p)^T,(C_p)^T,(B^w_p)^T))^{\bot}$ (see {also} the results of (\ref{Eq:dualinvarsubsp}) {below}) and  equation (\ref{Eq:WO=R}), we have $\left[ {\begin{smallmatrix}
		{ A_p}& B^w_p\\
		{C_p}& 0
		\end{smallmatrix}} \right]$ is of full row rank. Thus $\left[ {\begin{smallmatrix}
		{ A_p}& B^w_p\\
		{C_p}& 0
		\end{smallmatrix}} \right]$ is square and invertible.
	
	Moreover, by item (i) of this proof, there exists a Morse transformation $M^2_{tran}$ with triangular $T_w$ such that the pairs $(\hat A^{pu},\hat B^{pu})$ and $(A^{pv},B^{pv})$ below are {in} the Brunovsk{\'y} form with indices $(\sigma_{1},\dots,\sigma_{c})$ and $(\bar \sigma_{1},\dots,\bar \sigma_{d})$, respectively
	\begin{align*}
	M^2_{tran}\left( {\begin{smallmatrix}
		{ A_p}&\vline& B^u_p&\vline& B^v_p\\
		\hline \\
		{C_p}&\vline& 0&\vline& {}
		\end{smallmatrix}} \right)=\left( {\begin{smallmatrix}
		{ \hat A^{pu}}&0&\vline& \hat B^{pu}&\vline& 0\\
		{ 0}&A^{pv}&\vline& 0&\vline& B^{pv}\\
		\hline \\
		{\hat C^{u}}&{C^{v}}&\vline& 0&\vline& {}
		\end{smallmatrix}} \right).
	\end{align*}
	Then, {according} to the block-diagonal structure of $\hat A^{pu}$ and $A^{pv}$, the matrices $\hat C^{u}$ and $C^{v} $ above have the form:
	\begin{align*}
	\begin{small}
\hat C^{u}=\left[ {\begin{matrix}
	{\hat C_1^{u}}&\vline& {\hat C_2^{u}}&\vline&  \cdots &\vline& {\hat C_c^{u}}
	\end{matrix}} \right],\ \ \ \ 	C^{v}=\left[ {\begin{matrix}
	{C_1^{v}}&\vline& {C_2^{v}}&\vline&  \cdots &\vline& {C_d^{v}}
	\end{matrix}} \right],
	\end{small}
	\end{align*}
	where $\hat C_i^{u}\in \mathbb R^{p_3\times \sigma_{i}}$, $1\le i\le c$ and $C_i^{v}\in \mathbb R^{p_3\times \bar \sigma_{i}}$, $1\le i\le d$. 
	
	Now  the diagonal  submatrices $(\hat A^{pu}_i,\hat B^{pu}_i,\hat C_i^{u})$ of $(\hat A^{pu},\hat B^{pu},\hat C^{u})$, for $1\le i\le c$,  and $(A^{pv}_{i},B^{pv}_{i},C_i^{v})$ of $(A^{pv},B^{pv},C^{v})$, for $1\le i\le d $,  have to satisfy
	\begin{align}\label{Eq:WR}
	\mathcal W^*(\hat A^{pu}_{i},\hat B^{pu}_{i},\hat C_i^{u})=\mathbb{R}^{\sigma_{i}}, \ \ \ \
	\mathcal W^*(A^{pv}_{i},B^{pv}_{i},C_i^{v})=\mathbb{R}^{\bar \sigma_{i}},
	\end{align}
	since if not, equation (\ref{Eq:WO=R}) {does} not hold. 
	
	By {a direct} calculation, we have $\mathcal W_1(\hat A^{pu}_{i},\hat B^{pu}_{i},\hat C_i^u)={\rm Im\,}\hat B^{pu}_{i}$ and $\mathcal W_1(A^{pv}_{i},B^{pv}_{i},C_i^v)={\rm Im\,}B^{pv}_{i}$. Then the subspaces $\mathcal W_2(\hat A^{pu}_{i},\hat B^{pu}_{i},\hat C_i^u,0)$ and $\mathcal W_2(A^{pv}_{i},B^{pv}_{i},C_i^v,0)$  {coincide with} ${\rm Im\,}\hat B^{pu}_{i}$ and ${\rm Im\,}B^{pv}_{i}$, respectively, unless the last columns of $\hat C_i^u$ and $C_i^v$ are zero vectors. By similar arguments, we can deduce {that} $\hat C_i^u$, $1\le i\le c$ and  $C_i^v$, $1\le i\le d$ have the following form:
	\begin{align*}
	\begin{small}
 \hat C^u_i=\left[ {\begin{matrix}
	{\hat c_i^u}&\vline& 0&\vline&  \cdots &\vline& 0
	\end{matrix}} \right],\ \ \ \ 	C^v_i=\left[ {\begin{matrix}
	{c_i^v}&\vline& 0&\vline&  \cdots &\vline& 0
	\end{matrix}} \right],
	\end{small}
	\end{align*}
	where $\hat c_i^u\in \mathbb R^{p_3}$ and $c_i^v\in \mathbb R^{p_3}$. Furthermore, since the columns of $\hat A^{pu}_{i}$ and $A^{pv}_{i}$ corresponding to  $\hat c_i^u$ and $c_i^v$ are all zero, so  {by the inveritibility of $\left[ {\begin{smallmatrix}
		{ A_p}& B^w_p\\
		{C_p}& 0
		\end{smallmatrix}} \right]$,}  we {see that} the following matrix
	\begin{align*}
	T^{-1}_y=\left[ {\begin{matrix}
		{\hat c_1^u}&\hat c_2^u& \dots&\hat c_c^u& \vline& {c_1^v}&c_2^v& \dots&c_d^v
		\end{matrix}} \right]
	\end{align*}
	is invertible. Finally, {using} $T_y$ as {an} output coordinates transformation matrix,
	we get the following canonical form {for} $C_p$
	$$
	T_yC_p= {T_y}\left[ {\begin{matrix}
		{\hat C^{u}}&C^{v}
		\end{matrix}} \right]= \left[ {\begin{matrix}
		{\hat {C}^{pu}}&0\\
		0&{{C^{pv}}}
		\end{matrix}} \right].
	$$
	
	(iv) The proof of  {transforming} $(\bar A_4^4,\bar C_2^4)$  {into} $(A^o,C^o)$ is omitted since it is well-known in {the} linear control theory.
\end{proof}
\section{Conclusion}\label{sec:7}
In this paper, on one hand, for linear ODECSs, we modify and simplify the construction of the \textbf{MCF} {given in} \cite{molinari1978structural} by proposing {the} Morse triangular form \textbf{MTF}. {On} the other hand,  {a} bridge from the \textbf{MTF} of ODECSs to  the \textbf{FBCF} of DACSs is constructed via the explicitation with driving {variables} procedure. It is shown that, after attaching a class of ODECSs with two kinds of inputs to a DACS, we can find  connections between their geometric subspaces and canonical forms.  Finally, an {explicit} algorithm  {for} constructing  transformations from the \textbf{MTF}  {into} the \textbf{FBCF} is proposed via the explicitation procedure and an example is given to show {how our results and algorithms can be applied to physical systems}. 
\section*{Appendix} 
Recall the following  geometric subspaces for DACSs  (see e.g. \cite{ozccaldiran1986geometric},\cite{berger2013controllability}) {of the form $\Delta^u:E\dot x=Hx+Lu$. }
\begin{defn}
	Consider a DACS $\Delta^u_{l,n,m}=(E,H,L)$. A subspace $\mathscr V\subseteq \mathbb R^{n}$ is called $(H,E; {\rm Im\,}L)$-invariant if
	\begin{align*}
	H\mathscr V \subseteq E\mathscr V+ {\rm Im\,}L.
	\end{align*}
	A subspace $\mathscr W \subseteq \mathbb R^{n}$ is called restricted $(E,H;{\rm Im\,} L)$-invariant if 
	\begin{align*}
	\mathscr W  {=}E^{-1}(H\mathscr V+ {\rm Im\,}L).
	\end{align*}
\end{defn}
\begin{defn}\label{Def:augWong}
	For a DACS $\Delta^u_{l,n,m}=(E,H,L)$, define the augmented Wong sequences as follows:
	\begin{align}\label{Eq:V}
	\begin{array}{clcl}
	\mathscr V_0=\mathbb R^n, \ \ &\mathscr V_{i+1}=H^{-1}(E\mathscr V_{i}+{\rm Im\,}L),
	\end{array}\\
	\begin{array}{clcl}
	\mathscr W_0= 0,  \ \ &\mathscr W_{i+1}=E^{-1}(H\mathscr W_{i}+{\rm Im\,}L). 
	\end{array}	\label{Eq:W}
	\end{align}
	Additionally, define the {sequence of subspaces} $\hat{\mathscr W}_i$ as follows:
	\begin{align}\label{Eq:Waddition}
	\hat{\mathscr W}_1= \ker E, \ \  \hat{\mathscr W}_{i+1}=E^{-1}(H\hat{\mathscr W}_{i}+{\rm Im\,}L).
	\end{align}	
\end{defn}
{Consider} an ODECS $\Lambda^{uv}_{n,m,s,p}=(A,B^u,B^v,C,D)$ of the form$$\Lambda^{uv}:\left\lbrace  {\begin{array}{*{20}{l}}
	\dot x=Ax+B^uu+B^vv\\
	y = Cx+D^uu.
	\end{array}}\right.	$$The state, input and output space of $\Lambda^{uv}$ will be denoted by $\mathscr X$, $\mathscr{U}_{uv}$ and $\mathscr{Y}$, respectively. The input subspaces of $u$ and $v$ will be denoted by $\mathscr{U}_u$ and  $\mathscr{U}_v$, respectively. Thus we have $\mathscr{U}_{uv}=\mathscr{U}_u\oplus\mathscr{U}_v$. Recall that $\Lambda^{uv}$ can be expressed as a classical ODECS $\Lambda^w_{n,m+s,p}=(A,B^w,C,D^w)$ of the form (\ref{Eq:ODEcontrol}). The input space of $\Lambda^w$ is denoted by $\mathscr U_w$, and, clearly, $\mathscr U_w=\mathscr U_{uv}=\mathscr U_{u}\oplus\mathscr U_{v}$. We now recall the invariant subspaces $\mathcal V $ and $\mathcal W$ {defined} in \cite{molinari1976strong} and \cite{molinari1978structural} for $\Lambda^w$ (generalizing the classical invariant subspaces \cite{basile1992controlled,wonham1970decoupling,wonham1974linear} \red{given} for $D^u=0$).
\begin{defn}\label{Def:invariantsub}
	For an ODECS $\Lambda^{w}_{n,m+s,p}=(A,B^w,C,D^w)$, a subspace $\mathcal V \subseteq \mathbb{R}^n$ is called a null-output $(A,B^w)$-controlled invariant subspace if there exists $F\in \mathbb{R}^{(m+s)\times n}$ such that 
	\[(A+B^wF)\mathcal V\subseteq\mathcal V \ \ \ {\rm and }\ \ \ (C+D^wF)\mathcal V=0\]
	and a subspace $\mathcal U_w \subseteq \mathbb{R}^{s+m}$ is called a null-output $(A,B^w)$-controlled invariant input subspace if
	\[ \mathcal U_w=(B^w)^{-1}\mathcal V\cap \ker D^w.\]  
	Denote by $\mathcal V^*$ (respectively $\mathcal U_w^*$)  the largest null-output $(A,B^w)$ controlled invariant subspace (respectively input subspace).
	
	Correspondingly, a subspace $\mathcal W \subseteq \mathbb{R}^n$ is called an unknown-input $(C,A)$-conditioned invariant subspace if there exists $K\in \mathbb{R}^{n\times p}$ such that 
	$$(A+KC)\mathcal W+(B^w+KD^w) {\mathscr{U}_w}=\mathcal W$$ 
	and a subspace $\mathcal Y \subseteq \mathbb{R}^p$ is called an unknown-input $(C,A)$-conditioned invariant output subspace if 
	$$ \mathcal Y=C\mathcal W+ D^w{\mathscr{U}_w}.$$ 
	Denote by $\mathcal W^*$ (respectively $\mathcal Y^*$) the smallest unknown-input $(C,A)$-conditioned invariant subspace (respectively output subspace).
\end{defn}
\begin{lem}{\rm \cite{molinari1976strong}}\label{Lem:invarsubspaceseq}
	Initialize $\mathcal V_0={\mathscr{X}}=\mathbb{R}^n$ and, for $i\in 
	\mathbb{N}$, define inductively
	\begin{align}\label{Eq:barV}
	{{\mathcal V}_{i + 1}} = {\left[ {\begin{smallmatrix}
			A\\
			C
			\end{smallmatrix}} \right]^{ - 1}}\left( {\left[ {\begin{smallmatrix}
			I\\
			0
			\end{smallmatrix}} \right]{{\mathcal V}_i} + {\rm Im\,}\left[ {\begin{smallmatrix}
			{B^w}\\
			{D^w}
			\end{smallmatrix}} \right]} \right) 
	\end{align}
	and $\mathcal U_i\subseteq {\mathscr{U}}$ for $i\in \mathbb{N}$ are given by
	\begin{align}\label{Eq:barI}
	{{\mathcal U}_i} = {\left[ {\begin{smallmatrix}
			B^w\\
			D^w
			\end{smallmatrix}} \right]^{ - 1}}\left[ {\begin{smallmatrix}
		{{{\mathcal V}_i}}\\
		0
		\end{smallmatrix}} \right].
	\end{align}
	{Then}  $\mathcal V^*=\mathcal V_n$ and $\mathcal U_w^*=\mathcal U_n$.
	
	Correspondingly, initialize $\mathcal W_0=\{0\}$ and, for $i\in 
	\mathbb{N}$, define inductively
	\begin{align}\label{Eq:barW}
	\mathcal W_{i + 1}= \left[ {\begin{smallmatrix}
		A&B^w
		\end{smallmatrix}} \right]\left(  {\left[ {\begin{smallmatrix}
			\mathcal W_{i}\\
			{\mathscr{U}_w}
			\end{smallmatrix}} \right] \cap \ker \left[ {\begin{smallmatrix}
			C&D^w
			\end{smallmatrix}} \right]} \right) 
	\end{align}
	and $\mathcal Y_i\subseteq{\mathscr{Y}}$ for $i\in \mathbb{N}$ are given by
	\begin{align}\label{Eq:barO}
	{{\mathcal Y}_i} = \left[ {\begin{smallmatrix}
		C&D^w
		\end{smallmatrix}} \right]\left[ {\begin{smallmatrix}
		{{{\mathcal W}_i}}\\
		\mathscr{U}_w
		\end{smallmatrix}} \right].
	\end{align}
	Additionally, define  {a sequence $ \hat{\mathcal W}_i $ of subspaces} as
	\begin{align}\label{Eq:barWaddition}
	\hat{\mathcal W}_{1}={{\rm Im\,} B^v}, \ \ \
	{\hat{\mathcal W}_{i + 1}} = \left[ {\begin{smallmatrix}
		A&B^w
		\end{smallmatrix}} \right]\left( {\left[ {\begin{smallmatrix}
			{{\hat{\mathcal W}_i}}\\
			{\mathscr{U}}_w
			\end{smallmatrix}} \right] \cap \ker \left[ {\begin{smallmatrix}
			C&D^w
			\end{smallmatrix}} \right]} \right) .
	\end{align}
	 {Then $\mathcal W^*=\mathcal W_n=\hat{\mathcal W}_n$ and $\mathcal Y^*=\mathcal Y_n$}.
\end{lem}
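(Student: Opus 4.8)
The plan is to read both halves of the lemma as fixed-point iterations of monotone maps on the lattice of subspaces of $\mathbb R^n$, and to identify each algebraically-defined limit with the corresponding extremal invariant subspace of Definition \ref{Def:invariantsub}. I introduce the two operators
\[
\Psi(\mathcal V)=\left[{\begin{smallmatrix}A\\C\end{smallmatrix}}\right]^{-1}\!\left(\left[{\begin{smallmatrix}I\\0\end{smallmatrix}}\right]\mathcal V+{\rm Im\,}\left[{\begin{smallmatrix}B^w\\D^w\end{smallmatrix}}\right]\right),\qquad \Phi(\mathcal W)=\left[{\begin{smallmatrix}A&B^w\end{smallmatrix}}\right]\!\left(\left[{\begin{smallmatrix}\mathcal W\\\mathscr U_w\end{smallmatrix}}\right]\cap\ker\left[{\begin{smallmatrix}C&D^w\end{smallmatrix}}\right]\right),
\]
so that $\mathcal V_{i+1}=\Psi(\mathcal V_i)$ and $\mathcal W_{i+1}=\Phi(\mathcal W_i)$; both maps are inclusion-preserving. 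The crux of the bookkeeping is the equivalence of the two descriptions of invariance: unravelling the feedback $F$ shows that $\mathcal V$ is a null-output $(A,B^w)$-controlled invariant subspace \emph{iff} $\mathcal V\subseteq\Psi(\mathcal V)$, because $\exists F$ with $(A+B^wF)\mathcal V\subseteq\mathcal V$ and $(C+D^wF)\mathcal V=0$ is exactly the containment $\left[{\begin{smallmatrix}A\\C\end{smallmatrix}}\right]\mathcal V\subseteq\left[{\begin{smallmatrix}\mathcal V\\0\end{smallmatrix}}\right]+{\rm Im\,}\left[{\begin{smallmatrix}B^w\\D^w\end{smallmatrix}}\right]$; dually, fixed points $\Phi(\mathcal W)=\mathcal W$ are precisely the conditioned invariant subspaces of the definition.

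For the $\mathcal V$-side I would first note $\mathcal V_1=\Psi(\mathbb R^n)\subseteq\mathbb R^n=\mathcal V_0$; monotonicity then propagates this to a decreasing chain $\mathcal V_0\supseteq\mathcal V_1\supseteq\cdots$. Since $\dim\mathcal V_i$ is a non-increasing integer in $[0,n]$ and each strict drop costs at least one dimension, the chain stabilizes after at most $n$ steps, giving $\mathcal V_n=\mathcal V_{n+1}=\Psi(\mathcal V_n)$; in particular $\mathcal V_n\subseteq\Psi(\mathcal V_n)$, so $\mathcal V_n$ is controlled invariant. Maximality follows because any controlled invariant $\mathcal V$ obeys $\mathcal V\subseteq\Psi(\mathcal V)$, whence $\mathcal V\subseteq\mathcal V_0$ and inductively $\mathcal V\subseteq\mathcal V_i$ for every $i$, so $\mathcal V\subseteq\mathcal V_n$. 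Thus $\mathcal V_n=\mathcal V^*$, and $\mathcal U_n=\left[{\begin{smallmatrix}B^w\\D^w\end{smallmatrix}}\right]^{-1}\left[{\begin{smallmatrix}\mathcal V_n\\0\end{smallmatrix}}\right]=(B^w)^{-1}\mathcal V^*\cap\ker D^w=\mathcal U_w^*$ is immediate from the definition.

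The $\mathcal W$-side I would get by order duality rather than repeating the argument. Passing to the dual system $(A^{\!\top},C^{\!\top},B^{w\top},D^{w\top})$ and using $(X\mathcal S)^\perp=X^{-\top}\mathcal S^\perp$ and $(X^{-1}\mathcal S)^\perp=X^{\!\top}\mathcal S^\perp$, one checks $\Phi(\mathcal W)=\big(\Psi^{\mathrm d}(\mathcal W^\perp)\big)^\perp$, where $\Psi^{\mathrm d}$ is the $\Psi$-operator of the dual, so that $\mathcal W_i=\big(\mathcal V_i^{\mathrm d}\big)^\perp$ with $\mathcal V_0^{\mathrm d}=\mathbb R^n$. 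Hence $\mathcal W_i$ is \emph{increasing}, stabilizes in at most $n$ steps, and its limit is the smallest conditioned invariant subspace, i.e. $\mathcal W_n=\mathcal W^*$ (this is the identity $\mathcal W^*=(\mathcal V^*{}^{\mathrm d})^\perp$ already invoked in the proof of Theorem \ref{Thm:EMCF}); correspondingly $\mathcal Y_n=\mathcal Y^*$. These last conclusions are Molinari's \cite{molinari1976strong}; I include the duality only to keep the argument self-contained.

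It remains to treat the auxiliary chain $\hat{\mathcal W}_i$, which runs under the same map $\Phi$ but starts at $\hat{\mathcal W}_1={\rm Im\,}B^v$ instead of $\{0\}$. Because $w=[u^{\!\top},v^{\!\top}]^{\!\top}$ and $D^w=[D^u,0]$, every $(0,v)$ lies in $\ker[C\ D^w]$ and is mapped by $[A\ B^w]$ to $B^vv$; hence ${\rm Im\,}B^v\subseteq\Phi(\{0\})=\mathcal W_1\subseteq\mathcal W^*$. Since $\mathcal W^*$ is a fixed point of $\Phi$, induction gives $\hat{\mathcal W}_i\subseteq\mathcal W^*$ for all $i$; and from $\{0\}\subseteq\hat{\mathcal W}_1\subseteq\mathcal W_1$ repeated application of $\Phi$ yields the sandwich $\mathcal W_i\subseteq\hat{\mathcal W}_{i+1}\subseteq\mathcal W_{i+1}$. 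The limit of the increasing chain $\hat{\mathcal W}_i$ is conditioned invariant, hence contains the smallest such subspace $\mathcal W^*$, while being $\subseteq\mathcal W^*$, so it equals $\mathcal W^*$. The main obstacle is the \emph{finite-index} claim $\hat{\mathcal W}_n=\mathcal W^*$ rather than mere convergence in the limit: the sandwich at $i=n$ only gives $\hat{\mathcal W}_{n+1}=\mathcal W^*$, so I would sharpen it by a dimension count, noting that a chain starting at ${\rm Im\,}B^v$ of dimension $s$ strictly increases at most $n-s$ times and is therefore already stationary at index $n$ whenever $s\ge 1$; the degenerate case $s=0$ (no driving variables, $\ker E=\{0\}$) reduces to $\hat{\mathcal W}_i=\mathcal W_{i-1}$ and is checked directly. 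Keeping this index bookkeeping tight is the only delicate point; everything else is the monotone fixed-point template applied twice.
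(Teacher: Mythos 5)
The paper itself contains no proof of this lemma: the $\mathcal V_i/\mathcal W_i/\mathcal U_i/\mathcal Y_i$ statements are quoted from Molinari \cite{molinari1976strong}, and the $\hat{\mathcal W}_i$ sequence is the authors' own addition, stated without argument. So there is nothing to compare line-by-line; your monotone fixed-point template (characterize controlled invariance as $\mathcal V\subseteq\Psi(\mathcal V)$, get a decreasing chain stabilizing in $\le n$ steps, identify the stable value as the maximal element, then dualize via $\Phi(\mathcal W)=\bigl(\Psi^{\mathrm d}(\mathcal W^\perp)\bigr)^\perp$) is the standard route and is correct for all of the cited parts, including the identification $\mathcal U_n=(B^w)^{-1}\mathcal V^*\cap\ker D^w$ and $\mathcal Y_n=C\mathcal W^*+D^w\mathscr U_w$. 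Your treatment of $\hat{\mathcal W}_i$ — the containment ${\rm Im\,}B^v\subseteq B^w\ker D^w=\mathcal W_1$ coming from $D^w=[D^u\ 0]$, the sandwich $\mathcal W_i\subseteq\hat{\mathcal W}_{i+1}\subseteq\mathcal W_{i+1}$, and the dimension count showing the chain starting at dimension $s$ is stationary by index $n-s+1\le n$ when $s\ge 1$ — is exactly the right argument, and you correctly spotted that the naive sandwich only delivers $\hat{\mathcal W}_{n+1}=\mathcal W^*$.

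The one concrete flaw is your claim that the degenerate case $s=0$ "is checked directly." It is not: there the reduction $\hat{\mathcal W}_i=\mathcal W_{i-1}$ gives $\hat{\mathcal W}_n=\mathcal W_{n-1}$, and $\mathcal W_{n-1}=\mathcal W_n$ can fail. Take a single chain of $n$ integrators $\dot x_1=x_2,\dots,\dot x_n=u$, $y=x_1$, $D=0$ (which arises as the explicitation of $E=\left[\begin{smallmatrix}I_n\\0\end{smallmatrix}\right]$, so $\ker E=0$ and $s=0$): then $\dim\mathcal W_i=i$ for $0\le i\le n$, so $\mathcal W_{n-1}\subsetneq\mathcal W_n=\mathcal W^*=\mathbb R^n$ and $\hat{\mathcal W}_n\ne\mathcal W^*$. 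So the equality $\hat{\mathcal W}_n=\mathcal W^*$ genuinely requires $s\ge 1$ (equivalently ${\rm Im\,}B^v=\ker E\ne 0$), or the index must be taken as $n+1$; this is really an imprecision in the lemma as stated rather than in your main argument, but you should say that the case $s=0$ is excluded (or fails) instead of asserting it can be verified.
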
 
Note that {when considering} the above defined invariant subspaces for the dual system $(\Lambda^{w})^d$ of $\Lambda^w$, given by $(\Lambda^{w})^d=(A^T,C^T,(B^w)^T,(D^w)^T)$, we have the following results \cite{morse1973structural},\cite{molinari1978structural}:
\begin{align}\label{Eq:dualinvarsubsp}
\begin{aligned}
\mathcal V^*(\Lambda^w)=\left( \mathcal W^*((\Lambda^{w})^d)\right) ^{\bot},& ~~~\mathcal W^*(\Lambda^w)=\left( \mathcal V^*((\Lambda^{w})^d)\right) ^{\bot}, \\
\mathcal U_w^*(\Lambda^w)=\left( \mathcal Y^*((\Lambda^{w})^d)\right) ^{\bot},& ~~~\mathcal Y^*(\Lambda^w)=\left( \mathcal U_w^*((\Lambda^{w})^d)\right) ^{\bot}.
\end{aligned}
\end{align} 
\\ \textbf{Acknowledgment.}   The first author of the paper is currently supported by Vidi-grant 639.032.733.

\bibliographystyle{siamplain}
\bibliography{siamref}

\end{document}